\newcommand\N{{\mathbb N}}
\newcommand\R{{\mathbb R}}
\newcommand\T{{\mathbb T}}
\newcommand\C{{\mathbb C}}
\def\AA{{\mathcal A}}
\def\BB{{\mathcal B}}
\def\CC{{\mathcal C}}
\def\DD{{\mathcal D}}
\def\EE{{\mathcal E}}
\def\FF{{\mathcal F}}
\def\GG{{\mathcal G}}
\def\LL{{\mathcal L}}
\def\SS{{\mathcal S}}
\def\TT{{\mathcal T}}
\def\BBB{{\mathscr B}}
\def\CCC{{\mathscr C}}
\def\GGG{{\mathscr G}}
\newcommand{\la}{\lesssim}
\newcommand\dt{{\frac{\mathrm d}{\mathrm dt}}}
\newcommand{\dd}{{\, \mathrm d}}
\newcommand{\n}[1]{{\left\| #1 \right\|}}
\newcommand{\fa}{\forall \,}
\newcommand{\sk}{\smallskip}
\newcommand{\mk}{\medskip}
\newcommand{\var}{\varepsilon}
\newcommand{\Id}{\hbox{Id}}
\let\oldmarginpar\marginpar
\renewcommand\marginpar[1]{\-\oldmarginpar[\raggedleft\footnotesize #1]%
{\raggedright\footnotesize #1}}
\newcommand{\Black}{\color{black}}
\def\eps{{\varepsilon}}
\newtheorem{theo}{Theorem}
\newtheorem{lem}[theo]{Lemma}
\newtheorem{cor}[theo]{Corollary}
\theoremstyle{definition}
\newtheorem{defin}[theo]{Definition}
\theoremstyle{remark}
\newtheorem{rem}[theo]{Remark}
\newtheorem{rems}[theo]{Remarks}
\newcommand{\beqn}{\begin{equation}}
\newcommand{\eeqn}{\end{equation}}
\newcommand{\bear}{\begin{eqnarray}}
\newcommand{\eear}{\end{eqnarray}}
\newcommand{\bean}{\begin{eqnarray*}}
\newcommand{\eean}{\end{eqnarray*}}
\newcommand{\ds}{\displaystyle}
 \def\signsm{\bigskip \begin{center} {\sc St\'ephane
      Mischler\par\vspace{1mm} Universit\'e Paris-Dauphine \& IUF\par
      CEREMADE, UMR CNRS 7534\par
      Place du Mar\'echal de Lattre de Tassigny \par
      75775 Paris Cedex 16 FRANCE\par\vspace{1mm} e-mail:}
    \tt{mischler@ceremade.dauphine.fr} \end{center}}
\def\signcm{\bigskip \begin{center} {\sc Cl\'ement
      Mouhot\par\vspace{1mm}
      University of Cambridge\par
      DPMMS, Centre for Mathematical Sciences\par
      Wilberforce road, Cambridge CB3 0WA, UK
      \par\vspace{1mm} e-mail:}
    \tt{C.Mouhot@dpmms.cam.ac.uk} \end{center}}
\author{S. Mischler, C. Mouhot}
\begin{document}

\title[Stability of slowly decaying solutions\dots]
{Exponential Stability of slowly decaying solutions to the
  Kinetic-Fokker-Planck equation}




\begin{abstract} 
  The aim of the present paper is twofold:
  
  (1) We carry on with developing an abstract method for deriving
  decay estimates on the semigroup associated to non-symmetric
  operators in Banach spaces as introduced in \cite{GMM}.  We extend
  the method so as to consider the \emph{shrinkage} of the functional
  space. Roughly speaking, we consider a class of operators writing as
  a dissipative part plus a mild perturbation, and we prove that if
  the associated semigroup satisfies a decay estimate in some
  reference space then it satisfies the same decay estimate in
  another---smaller or larger---Banach space under the condition
  that a certain iterate of the ``mild perturbation'' part of the
  operator combined with the dissipative part of the semigroup maps
  the larger space to the smaller space in a bounded way. The
  cornerstone of our approach is a factorization argument, reminiscent
  of the Dyson series.
  
  (2) We apply this method to the kinetic Fokker-Planck equation when
  the spatial domain is either the torus with periodic boundary
  conditions, or the whole space with a confinement potential.  We
  then obtain spectral gap estimates for the associated semigroup for
  various metrics, including Lebesgue norms, negative Sobolev norms,
  and the Monge-Kantorovich-Wasserstein distance $W_1$.
%
%
%
%
  \end{abstract}

\maketitle 

\bigskip

\noindent
\textbf{Mathematics Subject Classification (2000)}: 47D06
One-parameter semigroups and linear evolution equations [See also
34G10, 34K30], 35P15 Estimation of eigenvalues, upper and lower
bounds, 47H20 Semigroups of nonlinear operators [See also 37L05,
47J35, 54H15, 58D07], 35Q84 Fokker-Planck equations, 76P05 Rarefied
gas flows, Boltzmann equation [See also 82B40, 82C40, 82D05].
\medskip

\noindent
  \textbf{Keywords}: spectral gap; semigroup; spectral mapping
  theorem; hypocoercivity; hypodissipativity; Fokker-Planck equation;
  Kolmogorov-Fokker-Planck equation; enlargement.

\bigskip

\begin{center} {\bf Preliminary version of \today}
\end{center}

\tableofcontents


\section{Introduction}
\label{sec:intro}
\setcounter{equation}{0}
\setcounter{theo}{0}

\subsection{The question at hand}
\label{sec:question-at-hand}

\smallskip This paper deals with the study of decay properties of
linear semigroups and their link with spectral properties as well as
some applications to the Fokker-Planck equations with various types of
confinement. It continues the program of research \cite{Mcmp,GMM}
where quantitative methods for enlarging the functional space of
spectral gap estimates were developed with application to kinetic
equations; specifically in \cite{GMM} spectral gap estimates were
obtained in Lebesgue spaces for Boltzmann and Fokker-Planck equations
in the spatially homogeneous and spatially periodic frameworks.

\smallskip Our approach is based on the following abstract question:
consider two Banach spaces $E \subset \EE$ with $E$ is dense in $\EE$,
and two unbounded closed linear operators $L$ and $\LL$ respectively
on $E$ and $\EE$ with spectrum $\Sigma(L), \Sigma(\LL) \subset \C$,
which are assumed to generate $C_0$-semigroups $(\mathscr S_L(t))_{t \ge 0}$
on $E$ and $(\mathscr S_\LL(t))_{t \ge 0}$ on $\EE$ respectively and
so that $\LL_{|E} = L$; can one deduce quantitative informations on
$\Sigma(\LL)$ and $\mathscr S_\LL(t)$ in terms of informations on
$\Sigma(L)$ and $\mathscr S_L(t)$ (\emph{enlargement} issue), or can
one deduce quantitative informations on $\Sigma(L)$ and
$\mathscr S_L(t)$ in terms of informations on $\Sigma(\LL)$ and
$\mathscr S_\LL(t)$ (\emph{shrinkage} issue)?



We prove, under some assumptions discussed below, (i) that the
spectral gap property of $L$ in $E$ (resp. of $\LL$ in $\EE$) can be
shown to hold for $\LL$ in the space $\EE$ (resp. for $L$ in $E$) and
(ii) explicit estimates on the rate of decay of the semigroup
$\mathscr S_\LL(t)$ (resp. the semigroup $\mathscr S_L(t)$) can be
computed from the ones on $\mathscr S_L(t)$ (resp.
$\mathscr S_\LL(t)$). This holds for a class of operators $\LL$ which
split as $\LL = \AA + \BB$, where $\AA$ is bounded, $\BB$'s spectrum
is well localized and some appropriate combination of $\AA$ and the
semigroup $\mathscr S_\BB(t)$ of $\BB$ has some regularising
properties. This last ``semigroup commutator condition'' is
reminiscent of H\"ormander's commutator conditions~\cite{MR0222474}.

The Fokker-Planck equations we consider are then shown to belong to
this general class of operators and, as a consequence, we extend the
hypocoercivity results---usually obtained in $L^2$ or $H^1$ spaces
with inverse Gaussian type tail and endowed with convenient twisted
scalar product---into sharp exponential decay estimates on the
semigroup in many larger Lebesgue and Sobolev spaces.

\subsection{The abstract result}
\label{sec:main-abstract-result}

We denote $\mathscr C(E)$ the set of closed operators on a Banach
space $E$, $\mathscr B(E)$ the set of bounded operators on $E$, and
$\mathscr B(E,\EE)$ the set of bounded operators between two Banach
spaces. We say that $P \in \mathscr C(E)$ is \emph{hypodissipative} if
it is \emph{dissipative} for some norm equivalent to the canonical
norm of $E$ and we say that $P$ is dissipative for the norm $\| \cdot
\|$ on $E$ if 
\begin{equation*}
  \fa f \in \mbox{Domain}(P), \ \fa f^* \in E^* \, \mbox{ s.t. } \, \langle f, f^*
  \rangle = \n{f}_E^2 = \n{f^*}_{E^*} ^2, \quad \Re e \, \langle Pf, f^*
  \rangle \le 0
\end{equation*}
where the $\langle \cdot, \cdot \rangle$ denotes the duality bracket
between $E$ and its dual $E^*$. Finally we denote
$\Delta_a := \{ z \in \C;$ $\Re e \, z > a \}$.

\begin{theo}[Change of the functional space of the semigroup
  decay] \label{theo:Extension} Given $E$, $\EE$, $L$, $\LL$ defined
  as above, assume that there are $A, B \in \mathscr C(E)$, $\AA, \BB
  \in \mathscr{C}(\EE)$ so that
$$
\LL = \AA + \BB, \,\,\,  L = A + B, \,\,\, A = \AA_{|E}, \,\,\, B = \BB_{|E},
$$
and a real number $a\in \R$ such that
\begin{itemize}

\item[{\bf (i)}]   $(B-a)$ is  hypodissipative on $E$, $(\BB-a)$ is  hypodissipative on $\EE$;
\item[{\bf (ii)}] $A \in \BBB(E)$, $\AA \in \BBB( \EE)$;
\item[{\bf (iii)}] there is $n \ge 1$ and $C_a>0$ such that (semigroup
  commutator condition)
\[
\big\|  (\AA \mathscr S_\BB)^{(*n)}(t)  \big\|_{\BBB(\EE,E)} + \big\|
  (\mathscr S_\BB \AA)^{(*n)}(t)  \big\|_{\BBB(\EE,E)}  \le  C_a \, e^{at}.
\]
\end{itemize}
   
 \smallskip\noindent
 Then the following two properties are equivalent:
 \begin{itemize}
 \item[{\bf (1)}] There are distinct $\xi_1, \dots, \xi_k \in
   \Delta_a$ and finite rank projectors $\Pi_{j,L} \in \BBB(E)$, $1
   \le j \le k$, which commute with $L$ and satisfy
   $\Sigma(L_{|\Pi_{j,L}}) = \{ \xi_j \}$, so that the semigroup
   $\mathscr S_L(t)$ satisfies for any $a'>a$
 \begin{equation}\label{eq:estimSGL} 
   \forall \, t \ge 0, \quad \left\|  \mathscr S_L(t) - \sum_{j=1}^k 
     \mathscr S_L(t) \, \Pi_{j,L} \right\|_{\BBB(E)}
   \le C_{L,a'} \, e^{a' \, t}  
 \end{equation}
 with some constant $C_{L, a'}>0$.

\item[{\bf (2)}] There are distinct $\xi_1, \dots, \xi_k \in \Delta_a$
  and finite rank projectors $\Pi_{j,\LL} \in \BBB(\EE)$, $1 \le j \le
  k$, which commute with $\LL$ and satisfy $\Sigma(\LL_{|\Pi_{j,\LL}})
  = \{ \xi_j \}$, so that the semigroup $\mathscr S_\LL(t)$ satisfies
  for any $a'>a$
 \begin{equation}\label{eq:estimSGLL} 
   \forall \, t \ge 0, \quad \left\|  \mathscr S_\LL(t) - \sum_{j=1}^k 
     \mathscr S_\LL(t) \, \Pi_{j,\LL} \right\|_{\BBB(\EE)}
   \le C_{\LL, a'} \, e^{a' \, t}  
 \end{equation}
 with some constant $C_{\LL, a'} >0$.
\end{itemize}
  \end{theo}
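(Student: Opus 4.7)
My approach rests on an iterated Duhamel (Dyson-series) factorization. Starting from $\mathscr S_\LL = \mathscr S_\BB + \mathscr S_\LL * \AA \mathscr S_\BB$ and iterating $n$ times I get
\[
\mathscr S_\LL(t) = \sum_{\ell=0}^{n-1} \bigl(\mathscr S_\BB * (\AA \mathscr S_\BB)^{*\ell}\bigr)(t) + \bigl(\mathscr S_\LL * (\AA \mathscr S_\BB)^{*n}\bigr)(t),
\]
together with its companion with $\AA$ on the left. By assumption (iii) the tail factor $(\AA \mathscr S_\BB)^{*n}(s)$ maps $\EE$ into $E$ with norm $\le C_a \, e^{as}$, and since $\LL_{|E} = L$ one has $\mathscr S_\LL = \mathscr S_L$ on $E$, so the remainder of the factorization acts on $\EE$ via an excursion through $E$. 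This is the one bridge between the two functional settings, and both directions of the equivalence will be read off from it.

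\textbf{For (1) $\Rightarrow$ (2) (enlargement)} each partial-sum term $\mathscr S_\BB * (\AA \mathscr S_\BB)^{*\ell}$ is estimated in $\BBB(\EE)$ by $C\, t^{\ell} e^{at}/\ell!$ using (i) and (ii): time-convolving $\ell+1$ exponentials of rate $a$ gives polynomial growth absorbable in any $a' > a$. In the remainder I insert the decomposition of $\mathscr S_L$ from (1): the decaying part, by a Young-type convolution estimate, produces a contribution bounded by $C_{a'} e^{a't}$ in $\BBB(\EE)$, while the finite-rank pieces supply candidates for spectral projectors on $\EE$. I identify the latter with the Dunford integrals
\[
\Pi_{j,\LL} := \frac{1}{2\pi i} \oint_{\Gamma_j} (z - \LL)^{-1} \, dz
\]
around small loops $\Gamma_j$ encircling $\xi_j$, which confirms finite rank, commutation with $\LL$, and the localization $\Sigma(\LL_{|\Pi_{j,\LL}}) = \{\xi_j\}$.

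\textbf{For (2) $\Rightarrow$ (1) (shrinkage)} I use the companion identity: for $f \in E$,
\[
\mathscr S_L(t) f = \sum_{\ell=0}^{n-1} \bigl((\mathscr S_B A)^{*\ell} * \mathscr S_B\bigr)(t) f + \bigl((\mathscr S_\BB \AA)^{*n} * \mathscr S_\LL\bigr)(t) f,
\]
where the final tail ingests the $\EE$-valued factor $\mathscr S_\LL(s) f$ and returns an element of $E$ thanks to (iii). Feeding in (2) for $\mathscr S_\LL$, the decaying part yields a $\BBB(E)$-bound of order $e^{a't}$ and the finite-rank parts define $\Pi_{j,L}$, again realized as Dunford integrals of $(z-L)^{-1}$.

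\textbf{Main obstacle.} The time convolutions reduce to $L^1_t$-type estimates for exponential kernels and should not present any genuine difficulty. The delicate conceptual step is the spectral matching: I have to check that $\Sigma(L) \cap \Delta_a$ and $\Sigma(\LL) \cap \Delta_a$ agree with equal algebraic multiplicities, and that the projectors extracted from the factorization coincide with the Riesz spectral projectors, commute with their generators, and restrict/extend consistently. I handle this by Laplace-transforming the Duhamel identity to the resolvent level,
\[
R_\LL(z) = \sum_{\ell=0}^{n-1} R_\BB(z) \bigl(\AA R_\BB(z)\bigr)^\ell + R_L(z) \bigl(\AA R_\BB(z)\bigr)^n,
\]
in which the first sum is holomorphic on $\Delta_a$ by (i), so the poles of $R_\LL$ on $\Delta_a$ are exactly the poles of $R_L$, with principal parts intertwined by $(\AA R_\BB(\xi_j))^n$. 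This is the crux of the equivalence.
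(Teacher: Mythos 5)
Your proposal is correct and follows essentially the same route as the paper: the iterated Duhamel/Dyson factorization whose $n$-fold remainder $(\AA \mathscr S_\BB)^{(*n)}$ (resp.\ $(\mathscr S_\BB \AA)^{(*n)}$) bridges $\EE$ and $E$ through hypothesis \textbf{(iii)}, combined with the Laplace-transformed (resolvent) factorization to match the discrete spectra and Riesz projectors on $\Delta_a$, which is exactly the content of the paper's Theorem~\ref{theo:factor:equiv}. The paper merely packages the enlargement step as a triangular system of equations on intermediate functions $g^1,\dots,g^{n+1}$ and identifies the shrinkage factorization with $\mathscr S_L$ by uniqueness of the Laplace transform, and your sign-free resolvent identity simply reflects the standard convention $(z-\Lambda)^{-1}$ rather than the paper's $R_\Lambda(z)=(\Lambda-z)^{-1}$.
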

   
  \begin{rems} \label{rem:theoExtension}
   \begin{itemize}
   \item[{\bf (a)}] The constants in this statement can be estimated
     explicitly from the proof. 
   \item[{\bf (b)}] The same result holds in the case $\{ \xi_1,
     \dots, \xi_k \}= \emptyset$, that we denote as a convention
     as the case $k = 0$.
   \item[{\bf (c)}] The condition ``$E \subset \EE$'' can be replaced by
     ``$E\cap\EE$ is dense in $E$ and $\EE$ with continuous embedding''.
   \item[{\bf (d)}] Note that in the LHS of condition {\bf (iii)}, any
     of the two terms (at order $n$) can be deduced from the other (at
     order $n+1$) with the help of assumptions {\bf (i) } and {\bf
       (ii)}.
   \end{itemize}
 \end{rems}
 
\subsection{The main PDE results}
\label{sec:results-fokk-planck}

Let us briefly present the evolution PDEs of Fokker-Planck types on
which we are able to make use of Theorem~\ref{theo:Extension} in order
to establish exponential asymptotic stability of their equilibria.
\mbox{ }

\sk

\noindent
\emph{(a) ``Flat'' confinement.} The model is the kinetic
Fokker-Planck equation
\begin{equation}\label{eq:FP1}
\partial_t f + v \cdot \nabla_x f= \nabla_v \cdot \left( \nabla_v f + \nabla_v \Phi \,
  f\right), 
\end{equation} on the density $f=f(t,x,v)$, $t
\ge 0$, $x \in \T^d$ the flat $d$-dimensional torus, $v \in \R^d$, for
a friction potential $\Phi=\Phi(v)$ satisfying $ \Phi \approx |v|^\gamma$, $\gamma \ge 1$, for
large velocities. 

\begin{rem}
  Observe that this model contains as a subcase the (spatially
  homogeneous) Fokker-Planck equation
  \begin{equation}\label{eq:FP1bis}
    \partial_t f = \Delta_v f + \hbox{div}_v (\nabla_v \Phi \,
    f), \quad f = f(t,v), \ t \ge 0, \ v \in \R^d,
  \end{equation} 
  when the probability density $f=f(t,v)$ is independent of the space
  variable.
\end{rem}

\sk 

\noindent
\emph{(b) Confinement by a potential.} The model is the kinetic
Fokker-Planck equation in the whole space with a space confinement
potential
\begin{equation}\label{eq:FP2}
  \partial_t f + v \cdot \nabla_x f -
  \nabla_x \Psi \cdot \nabla_{v}f= \nabla_v
  \cdot \left( \nabla_v f + v \, f\right), 
\end{equation}
on the density $f=f(t,x,v)$, $t \ge 0$, $x \in \R^d$, $v \in \R^d$,
for a confinement potential $\Psi=\Psi(x)$ on the space variable which
behaves like $|x|^\beta$, $\beta \ge 1$, for large values of the
vector position.

%
\mk

For these models, we prove semigroup exponential decay estimates in
weighted Sobolev spaces with weight function increasing like
polynomial function or a stretch exponential function, so much slower
than the usual inverse of the Gaussian equilibrium.

\begin{theo}[] \label{theo:Gal} Consider $\LL$ the Fokker-Planck
  operator as defined above in (a) or (b), and $\mu$ the unique
  positive associated equilibrium with mass $1$.  Consider the
  weighted Sobolev space $\EE := W^{\sigma,p}(m)$ with
  $\sigma \in \{-1,0,1\}$ and $p \in [1,\infty]$, where the precise
  conditions on the weight $m$ (so that it is confining enough) are
  given in Theorems~\ref{theo:vFP} and
  \ref{theo:KFPrdhyp1}. 

  Then there exist $a < 0$ and $C_a > 0$ so that
  \begin{equation}\label{eq:intro:decay}
    \forall \, f_0, \, g_0 \in \EE, \quad 
    \left\| \mathscr S_\LL(t) f_0 - \mathscr S_\LL(t) g_0 \right\|_{\EE}\le 
    C_{a} \, e^{at} \, \left\|  f_0 - g_0 \right\|_{\EE}
  \end{equation}
  between two solutions with same mass; this implies the exponential
  convergence towards the projection on the first eigenspace $\R \mu$
  (associated with the eigenvalue $0$).

  In the case (a) (periodic confinement), we also establish a similar
  decay estimate in Monge-Kantorovich-Wasserstein distance: for all
  $f_0$, $g_0$ probability measures with first moment bounded
  \[
    W_1\left(\mathscr S_{\LL}(t) f_{0} , \mathscr S_{\LL}(t) g_{0}
  \right) \le C_a \, e^{a \, t} \, W_1( f_{0} , g_0).
  \]
\end{theo}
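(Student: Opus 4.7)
The plan is to derive both statements directly from the abstract Theorem~\ref{theo:Extension}, with $\EE = W^{\sigma,p}(m)$ playing the role of the ``large'' space and $E$ taken to be the classical hypocoercivity Hilbert space $L^2(\mu^{-1/2})$ (or $H^1(\mu^{-1/2})$ when $\sigma = 1$), equipped with a twisted Villani/Hérau--Nier inner product. On $E$ the classical hypocoercivity theory for the kinetic Fokker--Planck operator already supplies property \textbf{(1)}: there is a single eigenvalue $\xi_1=0$ in $\Delta_a$ for some explicit $a<0$, with finite-rank spectral projector $\Pi_{1,L}f = (\int f)\mu$, and the semigroup decays exponentially on $\ker \Pi_{1,L}$. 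The whole problem therefore reduces to exhibiting a splitting $\LL = \AA + \BB$ on both $E$ and $\EE$ satisfying assumptions \textbf{(i)}--\textbf{(iii)} of the abstract theorem.

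The splitting I would use is the classical ``truncation'' one: set $\AA := M \chi_R$, where $\chi_R$ is a smooth $v$-cutoff (resp.\ $(x,v)$-cutoff in case~(b)) supported in a ball of radius $R$ and $M$ a large constant, viewed as a multiplication operator, and $\BB := \LL - \AA$. Assumption \textbf{(ii)} is then immediate since $\AA$ is multiplication by a bounded, compactly supported function. Assumption \textbf{(i)}, the hypodissipativity of $\BB - a$, is a direct energy computation: testing $\BB f$ against the appropriate dual element of $W^{\sigma,p}(m)$, one sees that the friction/confinement drift of $\BB$ produces a coercive negative term which grows like a power of $|v|$ (resp.\ $|v|^2 + |x|^\beta$), and for $R$, $M$ large enough this dominates the lower-order diffusion contributions and the defect coming from commutators with the weight $m$ in every relevant $(\sigma,p)$; this is exactly the content of Theorems~\ref{theo:vFP} and \ref{theo:KFPrdhyp1} proved later in the paper.

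The main obstacle is assumption \textbf{(iii)}, the ``semigroup commutator'' regularisation estimate for $(\AA \mathscr S_\BB)^{(*n)}(t)$. Two distinct gains have to accumulate through the time-convolutions: a gain of regularity in both $v$ and $x$, and a gain of decay at infinity, so that after $n$ steps the operator maps $W^{\sigma,p}(m)$ into the Gaussian-weighted space $E$. The gain of $v$-regularity and $v$-decay comes from the Ornstein--Uhlenbeck-like structure of $\BB$ in $v$, while the cutoff $\AA$ trims the tails at each step. The delicate point is the gain in $x$: the transport $v\cdot\nabla_x$ and the $v$-diffusion satisfy Hörmander's bracket condition, and the Kolmogorov-type ultracontractive estimate $\|\nabla_x \mathscr S_\BB(t)\|\lesssim t^{-3/2}$, $\|\nabla_v \mathscr S_\BB(t)\|\lesssim t^{-1/2}$ (holding after conjugation by the weight) turns $v$-regularity progressively into $x$-regularity at the price of short-time singularities. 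I would accordingly set up a finite scale of intermediate spaces $\EE = \EE_0 \supset \EE_1 \supset \cdots \supset \EE_n = E$ trading a small amount of weight and regularity at each stage, prove short-time bounds of the form
\[
  \bigl\| \AA \mathscr S_\BB(t) \bigr\|_{\BBB(\EE_j,\EE_{j+1})} \le C \, t^{-\alpha_j} \, e^{at}, \qquad \alpha_j<1,
\]
and let the time-convolution defining $(\AA\mathscr S_\BB)^{(*n)}$ smooth out the singularities. Choosing $n$ large enough that the accumulated exponent satisfies $\sum_j \alpha_j < n$ produces a bounded operator from $\EE$ to $E$ with exponential decay at rate $a$, which is exactly condition \textbf{(iii)}. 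This Dyson-series iteration is the technical heart of the argument.

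Once \textbf{(i)}--\textbf{(iii)} are established, Theorem~\ref{theo:Extension} transports property \textbf{(1)} on $E$ to property \textbf{(2)} on $\EE$ with the same spectral data $\xi_1=0$ and $\Pi_{1,\LL}f = (\int f)\mu$. Applied to the difference $h := f_0 - g_0$, which has zero mass and hence lies in $\ker \Pi_{1,\LL}$, the bound of property \textbf{(2)} becomes $\|\mathscr S_\LL(t)h\|_\EE \le C_a e^{at}\|h\|_\EE$, which is precisely \eqref{eq:intro:decay}. For the $W_1$ estimate in case~(a), I would invoke Kantorovich--Rubinstein duality, $W_1(f_0,g_0) = \sup\{\int \varphi\, d(f_0 - g_0) : \mathrm{Lip}(\varphi)\le 1\}$, to identify $W_1$, on probability measures with uniformly bounded first moment on $\T^d\times\R^d$, with a norm comparable to the $W^{-1,1}(m)$ norm of $f_0 - g_0$ for a suitable polynomial weight $m$; the instance $\sigma=-1$, $p=1$ of \eqref{eq:intro:decay} then delivers the announced exponential contraction in $W_1$.
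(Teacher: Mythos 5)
For the main estimate \eqref{eq:intro:decay} your proposal follows the paper's route essentially verbatim: same splitting $\AA = M\chi_R$, $\BB = \LL - \AA$, dissipativity of $\BB-a$ in $W^{\sigma,p}(m)$ by weighted energy estimates (Lemmas~\ref{lem:FPhomo-BLp}--\ref{lem:FPhomo-BW-1} and \ref{lem:FPKrd-BdissipLp}--\ref{lem:FPKrd-BdissipWp}), hypoelliptic short-time regularisation with singular factors $t^{-\alpha}$ smoothed out by the convolution iterates through a scale of intermediate spaces (Lemmas~\ref{lem:reg-kfp}, \ref{lem:reg-kfp-pot} combined with the interpolation argument of Lemma~\ref{lem:Tn}), and then Theorem~\ref{theo:Extension} applied with $E = H^1(\mu^{-1/2})$ and the known hypocoercivity input. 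No objection there.

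The $W_1$ part, however, has a genuine gap. You claim that the instance $\sigma=-1$, $p=1$ of \eqref{eq:intro:decay} delivers the $W_1$ contraction after identifying $W_1$ with ``a norm comparable to the $W^{-1,1}(m)$ norm for a suitable polynomial weight $m$''. This identification fails for the weights admissible in Theorem~\ref{theo:vFP}: Kantorovich--Rubinstein duality matches $W_1$ (on measures with the same mass) with the dual of the space of test functions $\psi$ satisfying $\|\psi \langle v\rangle^{-1}\|_{L^\infty}\le 1$ and $\|\nabla\psi\|_{L^\infty}\le 1$, i.e.\ a negative Sobolev norm whose zero-order part carries \emph{exactly} the weight $\langle v\rangle^{1}$ and whose gradient part carries no weight at all. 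The admissible polynomial weights for $\sigma=-1$, $p=1$ require $k > 1 + \sqrt{d}\,(\gamma-2) > 1$ (strictly, even for $\gamma=2$), so the resulting dual norm is not equivalent to $W_1$; a stronger weight on the test functions shrinks the dual space and the inequality goes the wrong way. The paper therefore has to prove a \emph{separate} dissipativity estimate for $\BB$ in the borderline space $(\FF_\infty)'$ tailored to match $W_1$ (Lemma~\ref{lem:FPhomo-BLipPrime}); that estimate only yields a negative abscissa $\tilde a_\gamma = (d-1)(\gamma-2)-1 < 0$ under the restriction $\gamma \in [2, 2+1/(d-1))$, which is precisely where that hypothesis in Theorem~\ref{theo:vFP} comes from, and it is also why the $W_1$ statement is not available in case (b) (see the remarks after Theorem~\ref{theo:KFPrdhyp1}). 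Your argument as written neither produces this restriction nor explains why the generic $W^{-1,1}(m)$ estimate cannot be used, so the $W_1$ conclusion does not follow from what you have proved.
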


\smallskip This theorem is proved by combining:
\begin{itemize}
\item the spectral gap property of the Fokker-Planck semigroup which
  is classically known in the space of self-adjointness
  $L^2(\mu^{-1/2})$ in a spatially homogeneous setting (Poincar\'e
  inequality) and has been recently proved in a series of works about
  {\it ``hypocoercivity''} in the spaces $L^2(\mu^{-1/2})$ or
  $H^1(\mu^{-1/2})$ for the kinetic Fokker-Planck semigroup with
  periodic or potential confinements \cite{MR2034753,MR2562709,DMS};
\item an appropriate decomposition of the operator with:
  \begin{itemize}
  \item some dissipativity estimates in the ``target'' functional
    spaces, for the ``dissipative part'' of the decomposition (this is
    the main difficulty in the case of confinement by a potential and
    we introduce specifically weight multipliers inspired from
    commutator conditions on derivatives);
  \item some regularisation estimates adapted on the semigroup
    inspired from ultracontractivity estimates in the spirit of Nash's
    regularity estimate \cite{MR0100158} in the spatially homogeneous
    case and H\'erau-Villani's quantitative global hypoellipticity
    estimate \cite{Herau2007}-\cite[section A.21.3]{MR2562709} in the
    spatially inhomogeneous case;
  \item the application of Theorem~\ref{theo:Extension} (whose
    assumptions are established by the previous items) which
    establishes the decay estimates of the semigroup in the target
    functional space;
  \item finally the $W_1$ estimate is obtained by some additional
    technical efforts in estimating the decay in weighted $W^{-1,1}$
    type spaces.
  \end{itemize}
\end{itemize}

\begin{rem}
  Some decay estimates for kinetic Fokker-Planck semigroups with flat
  confinement had been already established in \cite{GMM}. In this
  setting this new paper improves on these previous paper as follows:
  we use new integral identity in order to deal with any integrability
  exponent $p \in [1,\infty]$ and we introduce an appropriate duality
  argument in order to deal with the regularity exponent $\sigma =
  -1$.
\end{rem}

\begin{rem} 
  Let us mention that there is an important literature in the
  probability community, see for instance \cite{MR1807683,MR2381160},
  that deals with exponential relaxation to equilibrium for stochastic
  processes whose law follows kinetic Fokker-Planck equations. From an
  analysis viewpoint, these results typically correspond to the
  exponential decay of solutions to the PDE in weighted total
  variation norms, assuming higher moments or integrability on the
  initial data, and without quantitative estimate on the rate. It is
  worth pointing out that our dissipativity estimates on the
  dissipative part of the decomposition of the operator, discussed
  above, are reminiscent of the so-called ``Lyapunov condition'' at
  the core of these probability works (on the whole operator). Their
  uses and the results obtained are different though as we aim at
  semigroup decay estimates rather than functional inequalities, and
  take advantage of an already existing decay estimate in a
  (typically) smaller functional space.
\end{rem}

\subsection{Plan of the paper}
\label{sec:plan-paper}

The outline of the paper is as follows. We prove the main abstract
theorem in Section~\ref{sec:factorization}. We prove the decay
estimates on kinetic Fokker-Planck semigroups with periodic (or
spatially homogeneous) confinements in
Section~\ref{sec:FPhomo}. Finally we prove the decay estimates on
kinetic Fokker-Planck semigroups with confinement by a potential in
Section~\ref{sec:KFPrd}.
\medskip

\noindent
{\bf Acknowledgements.}   
We thank Jos\'e Alfr\'edo Ca\~nizo and Maria P. Gualdani for fruitful
comments and discussions. We thank the anonymous referees for many
useful comments. The first author's work is supported by the french
``ANR blanche'' project Stab: ANR-12-BS01-0019.  The second author's
work is supported by the ERC starting grant MATKIT.

 \bigskip\bigskip 
 \section{Factorisation of semigroups in Banach spaces and
   applications}
\label{sec:factorization}
\setcounter{equation}{0}
\setcounter{theo}{0}
 
The section is devoted to the proof of Theorem~\ref{theo:Extension}.
After having recalled some notation, we present the proof of
Theorem~\ref{theo:Extension} that we split into two steps, namely the
analysis of the spectral problem and the semigroup decay.

\subsection{Notations and definitions}
\label{sec:notation-definitions}

We denote by $\GGG(E) \subset \CCC(E)$ the space of semigroup
generators and for $\Lambda \in \GGG(E)$ we denote by $\mathscr
S_\Lambda(t) = e^{\Lambda t}$, $t \ge 0$, its semigroup, by
$\mathscr{D}(\Lambda)$ its domain, by $\mathscr{N}(\Lambda)$ its null
space, by
$$
\mathscr M(\Lambda) = \cup_{\alpha \ge 1} \mathscr N(\Lambda^\alpha)
$$
its algebraic null space, and by $\mathscr{R}(\Lambda)$ its range. 
We also denote by $\Sigma(\Lambda)$ its spectrum, so that for any $\xi
\in \C \backslash \Sigma(\Lambda)$ the operator $\Lambda - \xi$ is
invertible and the resolvent operator
$$
R_\Lambda(\xi) := (\Lambda -\xi)^{-1}
$$
is well-defined, belongs to $\mathscr{B}(E)$ and has range equal to
$\mathscr D(\Lambda)$.

We recall that $\xi \in \Sigma(\Lambda)$ is said to be an \emph{eigenvalue}
if $\mathscr N(\Lambda - \xi) \neq \{ 0 \}$. Moreover an eigenvalue $\xi \in
\Sigma(\Lambda)$ is said to be \emph{isolated} if
\[
\Sigma(\Lambda) \cap \left\{ z \in \C, \,\, |z - \xi| < r \right\} =
\{ \xi \} \ \mbox{ for some } r >0.
\]
In the case when $\xi$ is an isolated eigenvalue we may define
$\Pi_{\Lambda,\xi} \in \mathscr{B}(E)$ the spectral projector by 
\begin{equation}\label{def:SpectralProjection} 
\Pi_{\Lambda,\xi} := {i \over
  2\pi} \int_{ |z - \xi| = r' } (\Lambda-z)^{-1} \, dz
\end{equation}
with $0<r'<r$. Note that this definition is independent of the value
of $r'$ by Cauchy's theorem as the application
\[
\C \setminus \Sigma(\Lambda) \to \mathscr{B}(E), \quad z \mapsto
R_{\Lambda}(z)\] is holomorphic in $B(z,r)$. It is well-known
\cite[III-(6.19)]{Kato} that $\Pi_{\Lambda,\xi}^2=\Pi_{\Lambda,\xi}$
is a projector, and its range $\mathscr{R}(\Pi_{\Lambda,\xi})$ is the
closure of the algebraic eigenspace associated to $\xi$.  Moreover the
range of the spectral projector is finite-dimensional if and only if
there exists $\alpha_0 \in \N^*$ such that
\[
\hbox{dim} \, \mathscr N(\Lambda -\xi)^{\alpha_0} < \infty, \quad
\mathscr N(\Lambda -\xi)^\alpha = \mathscr N(\Lambda -\xi)^{\alpha_0} \ \mbox{ for
  any } \ \alpha \ge \alpha_0,
\]
so that 
\[
\overline{\mathscr M(\Lambda-\xi)} = \mathscr M(\Lambda - \xi) =
\mathscr N((\Lambda -\xi)^{\alpha_0}).
\]
In that case, we say that $\xi$ is a \emph{discrete eigenvalue}, written as
$\xi \in \Sigma_d(\Lambda)$. Observe that $R_\Lambda$ is meromorphic
on $(\C \setminus \Sigma(\Lambda)) \cup \Sigma_d(\Lambda)$ (with
non-removable finite-order poles). 
Finally for any $a \in \R$ such that $\Sigma(\Lambda) \cap \Delta_{a }
= \left\{ \xi_1, \dots, \xi_k\right\}$ where $\xi_1, \dots, \xi_k$ are
distinct discrete eigenvalues, we define without any risk of ambiguity
\[
\Pi_{\Lambda,a} := \Pi_{\Lambda,\xi_1} + \dots + \Pi_{\Lambda,\xi_k}.
\]

We need the following definition on the convolution of semigroup
(corresponding to composition at the level of the resolvent
operators).

\begin{defin}[Convolution of semigroups] 
  Consider some Banach spaces $X_1$, $X_2$, $X_3$. For two given
  functions
  \[
  \mathscr S_1 \in L^1(\R_+; \BBB(X_1,X_2)) \ \mbox{ and } \ \mathscr S_2 \in L^1(\R_+;
  \BBB(X_2,X_3)),
  \]
  we define the convolution $\mathscr S_2 \ast \mathscr S_1 \in
  L^1(\R_+; \BBB(X_1,X_3))$ by
  $$
  \forall \, t \ge 0, \quad (\mathscr S_2 * \mathscr S_1)(t) :=
  \int_0^t \mathscr S_2(s) \, \mathscr S_1 (t-s) \dd s.
  $$
  When $\mathscr S = \mathscr S_1= \mathscr S_2$ and $X_1=X_2=X_3$, we
  define inductively $\mathscr S^{(*1)} = \mathscr S$ and $\mathscr
  S^{(*\ell)} = \mathscr S \ast \mathscr S^{(*(\ell-1))} $ for any
  $\ell \ge 2$.
\end{defin}

\subsection{Factorization and spectral analysis when changing space}
\label{sec:spectr-analys-fact}


\begin{theo}\label{theo:factor:equiv} 
  Consider $E,\EE, L, A, B, \LL, \AA, \BB$ as above and assume
  that
\begin{itemize}
\item[{\bf (i$'$)}]   $\Sigma(B) \cap \Delta_a = \Sigma(\BB) \cap \Delta_a = \emptyset$    for some  $a \in \R$;
  \item[{\bf (ii)}]  $A \in \BBB(E)$ and   $\AA \in \BBB(\EE)$;
  \item[{\bf (iii$'$)}] there is $n \ge 1$ such that for any $\xi \in
    \Delta_a$, the operators $(\AA \, R_\BB(\xi))^n$ and $(R_\BB(\xi)
    \, \AA )^n$ are bounded from $\EE$ to $E$.
   \end{itemize}

\medskip
Then the following two properties are equivalent, with the same family of distinct complex numbers and the convention $\{\xi_1, \dots,
    \xi_k \}= \emptyset$ if $k=0$:
\smallskip
  \begin{itemize}
  \item[{\bf (1)}] $\Sigma(L) \cap \Delta_a = \left\{ \xi_1, \dots,
      \xi_k \right\} \subset \Sigma_d(L)$ (distinct discrete
    eigenvalues). 
\smallskip

\item[{\bf (2)}] $\Sigma(\LL) \cap \Delta_a = \left\{ \xi_1, \dots,
    \xi_k \right\} \subset \Sigma_d(\LL)$ (distinct discrete
  eigenvalues).
       \end{itemize}
\smallskip

Moreover, in both cases, there hold
    \begin{itemize}
  \item[{\bf (3)}]   For any $z \in \Delta_a \setminus \{ \xi_1, \dots, \xi_k
  \}$ the resolvent operators $R_L$ and $R_\LL$  satisfy: 
 \begin{align}
    \label{eq:factor-enlarg}
    R_{\LL}(z) & = \sum_{\ell=0} ^{n-1} (-1)^\ell R_{\BB}(z) \left( \AA
      R_{\BB}(z) \right)^\ell + (-1)^n R_{L}(z) \left( \AA
      R_{\BB}(z) \right)^n \\
    \label{eq:factor-reduc}
    R_{L}(z)  &= \sum_{\ell=0} ^{n-1} (-1)^\ell \left( R_{B}(z) A
    \right)^\ell R_{B}(z) + (-1)^n \left( R_{\BB}(z) \AA
    \right)^n R_{\LL}(z).
  \end{align}
 
  \smallskip

\item[{\bf (4)}]  For any $j=1, \dots,
  k$, we have 
\begin{equation*}
\left\{
\begin{array}{lcl}
  \mathscr N(L -\xi_j)^\alpha  & = & \mathscr N(\LL -\xi_j)^\alpha, 
  \quad \forall \, \alpha
  \ge 1
  \\[2mm]
  \mathscr M(L -\xi_j) & = & \mathscr M(\LL -\xi_j) \\[2mm]
  (\Pi_{\LL,\xi_j})_{|E}  & = &\Pi_{L,\xi_j}  \\[2mm]
  \mathscr S_{\LL,\xi_j}(t)  & = &\mathscr S_{\LL}(t) \Pi_{\LL,\xi_j} =
  \mathscr S_{L}(t) \Pi_{\LL,\xi_j}.
\end{array}
\right.
\end{equation*}
\end{itemize}  
 \end{theo}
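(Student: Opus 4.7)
The plan is to first establish the factorization identities (3) by a truncated Neumann expansion, then deduce the spectral equivalence (1)$\iff$(2) directly from them, and finally read off (4) via Cauchy's integral formula for the spectral projectors. The algebraic core is that on $\rho(\BB) \supset \Delta_a$ one has $\LL - z = (\BB - z) + \AA = (\BB - z)(I + R_\BB(z)\AA)$, and iterating the formal identity $(I - T)^{-1} = \sum_{\ell=0}^{n-1} T^\ell + T^n (I - T)^{-1}$ with $T = -\AA R_\BB(z)$ produces (\ref{eq:factor-enlarg}). Concretely, using $(\BB - z)R_\BB(z) = I_\EE$ and $\AA = \LL - \BB$, the operator $U_\ell(z) := R_\BB(z)(\AA R_\BB(z))^\ell$ satisfies
\[
(\LL - z)\,U_\ell(z) = (\AA R_\BB(z))^\ell + (\AA R_\BB(z))^{\ell+1},
\]
so that the alternating sum $\sum_{\ell=0}^{n-1}(-1)^\ell (\LL - z)U_\ell(z)$ telescopes to $I_\EE - (-1)^n (\AA R_\BB(z))^n$. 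Assumption (iii$'$) then ensures that $(\AA R_\BB(z))^n$ maps $\EE$ into $E \subset \mathscr D(\LL)$, so $R_L(z)(\AA R_\BB(z))^n$ is well-defined on $\EE$ and $(\LL - z) R_L(z)(\AA R_\BB(z))^n = (L - z)R_L(z)(\AA R_\BB(z))^n = (\AA R_\BB(z))^n$. Adding this term gives that the right-hand side of (\ref{eq:factor-enlarg}) is a right inverse of $\LL - z$; a symmetric computation with $(R_\BB(z)\AA)^n$ on the left supplies the left inverse, proving (\ref{eq:factor-enlarg}). The mirror identity (\ref{eq:factor-reduc}) is obtained in the same way from $L - z = (B - z)(I + R_B(z) A)$, using the other half of (iii$'$) to close the bootstrap in $E$.

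From (3), the equivalence (1)$\iff$(2) is immediate on resolvent sets: (\ref{eq:factor-enlarg}) shows $\rho(L) \cap \Delta_a \subset \rho(\LL)$, while (\ref{eq:factor-reduc}) shows the converse inclusion. Hence $\Sigma(\LL) \cap \Delta_a = \Sigma(L) \cap \Delta_a$ as sets. To upgrade this to equality of discrete spectra and to identify the spectral projectors, I would fix $\xi = \xi_j$ and a small circle $\Gamma \subset \rho(L) \cap \rho(\LL) \cap \Delta_a$ enclosing only $\xi$. Since $R_\BB(z)$ is holomorphic on $\Delta_a$ by (i$'$), the first $n$ terms of (\ref{eq:factor-enlarg}) are holomorphic across $\Gamma$ and contribute nothing to the contour integral, whence
\[
\Pi_{\LL,\xi} = \frac{i}{2\pi}\oint_\Gamma R_\LL(z)\,\dd z = \frac{(-1)^n\, i}{2\pi} \oint_\Gamma R_L(z)(\AA R_\BB(z))^n\,\dd z \;\in\; \BBB(\EE, E).
\]
In particular $\mathscr R(\Pi_{\LL,\xi}) \subset E$, and the rank of $\Pi_{\LL,\xi}$ is bounded by that of $\Pi_{L,\xi}$, so $\xi \in \Sigma_d(\LL)$. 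Moreover, for every $f \in E$ and every $z$ in the common resolvent set, $R_L(z) f = R_\LL(z) f$ by uniqueness of the resolvent (since $R_L(z) f \in \mathscr D(L) \subset \mathscr D(\LL)$ satisfies $(\LL - z) R_L(z) f = f$); integrating along $\Gamma$ therefore yields $(\Pi_{\LL,\xi})_{|E} = \Pi_{L,\xi}$.

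The remaining identities in (4) follow. The algebraic eigenspace $\mathscr M(\LL - \xi_j) = \mathscr R(\Pi_{\LL,\xi_j})$ lies inside $E$, where $\LL$ and $L$ coincide on domains, so $\mathscr N(\LL - \xi_j)^\alpha = \mathscr N(L - \xi_j)^\alpha$ for every $\alpha$ and $\mathscr M(\LL - \xi_j) = \mathscr M(L - \xi_j)$. The semigroup identity $\mathscr S_\LL(t)\Pi_{\LL,\xi_j} = \mathscr S_L(t)\Pi_{\LL,\xi_j}$ then holds because $\mathscr S_\LL(t)$ preserves the finite-dimensional range of $\Pi_{\LL,\xi_j}$ (which sits in $E$) and coincides with $\mathscr S_L(t)$ there. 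The step I expect to require the most care is the bookkeeping of domains in the factorization argument: one must check at each stage of the Neumann expansion that compositions are well-defined and that the bracketed expressions really lie in $\mathscr D(\LL)$ (respectively $\mathscr D(L)$) before applying $\LL - z$ (respectively $L - z$). Hypothesis (iii$'$) is precisely what makes the top-order term return from $\EE$ into $E$ and closes this bootstrap.
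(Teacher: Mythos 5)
Your proposal is correct in substance and rests on the same algebraic core as the paper (the truncated factorization identities (3) \emph{are} the engine of the paper's proof), but you organize the argument differently. The paper proves only \textbf{(2)} $\Rightarrow$ \textbf{(1)} (citing \cite{GMM} for the enlargement direction), and it does so by showing the right-hand side of \eqref{eq:factor-reduc} is a \emph{left} inverse of $L-z$, then proving surjectivity of $L-z$ separately by a bootstrap on $(\mathrm{Id}+R_\BB(z_0)\AA)f=R_B(z_0)g$, and finally identifying the generalized eigenspaces by a decreasing induction. You instead verify that each factorization formula is a \emph{two-sided} inverse by telescoping the Neumann sum, and you read off discreteness and the projector identities from the Cauchy integral; this is cleaner where it works and in particular replaces the paper's eigenspace induction by the single observation $\mathscr M(\LL-\xi_j)=\mathscr R(\Pi_{\LL,\xi_j})\subset E$. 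Three points deserve more care than you give them. First, the claim that ``a symmetric computation supplies the left inverse'' is not literally symmetric: in \eqref{eq:factor-reduc} the large resolvent $R_\LL$ sits \emph{inside} the composition, so applying $L-z$ to $(R_\BB\AA)^nR_\LL$ does not hit $R_\LL$ directly; one needs the extra identity $\bigl(\mathrm{Id}+R_\BB(z)\AA\bigr)R_\LL(z)=R_\BB(z)(\LL-z)R_\LL(z)=R_\BB(z)$ to close the telescoping (and dually for the left inverse of $\LL-z$ in \eqref{eq:factor-enlarg}); this is precisely where the paper's surjectivity/injectivity bootstrap reappears in disguise, together with the domain fact that $(R_\BB\AA)^n$ lands in $\mathscr D(L)$ and not merely in $E$. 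Second, ``$E\subset\mathscr D(\LL)$'' is false as written; what you need is only that $(\AA R_\BB(z))^n$ maps $\EE$ into $E$ and that $R_L(z)$ maps $E$ into $\mathscr D(L)\subset\mathscr D(\LL)$ where $\LL=L$. Third, the assertion that the rank of $\Pi_{\LL,\xi}$ is bounded by that of $\Pi_{L,\xi}$ requires the (standard but unstated) remark that $(\mathrm{Id}-\Pi_{L,\xi})R_L(z)$ is holomorphic at $\xi$, so the contour integral of $R_L(z)(\AA R_\BB(z))^n$ factors through the finite-rank projector $\Pi_{L,\xi}$. With these points supplied your argument is complete and yields both implications symmetrically, which the paper does not spell out.
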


\begin{rems}\label{rem:factorization} 
  \begin{enumerate}
  \item In this theorem, the implication \textbf{(1)} $\Rightarrow$
    \textbf{(2)} has been established in \cite[Theorem 2.1]{GMM};
    since $E \subset \EE$, this is a recipe for \emph{enlarging} the
    functional space where a property of localization of the discrete
    spectrum holds.  The implication \textbf{(2)} $\Rightarrow$
    \textbf{(1)} is a recipe for \emph{shrinking} the functional space
    where a property of localization of the discrete spectrum holds.
   
  \item In the simplest case where $\AA \in \mathscr{B}(\EE,E)$, the
    assumption {\bf (iii$'$)} is satisfied with $n=1$.
    
  \item The hypothesis (i)-(ii)-(iii) (for some $a \in \R$) in
    Theorem~\ref{theo:Extension} imply the hypothesis
    (i$'$)-(ii)-(iii$'$) above, for any $a' > a$.

  \item A similar result holds when we replace the assumption $E
    \subset \EE$ by the assumption that $E \cap \EE$ is dense in both
    $E$ and $\EE$.
  \end{enumerate}
\end{rems}

\begin{proof}[Proof of Theorem~\ref{theo:factor:equiv}.]
  Because of Remark~\ref{rem:factorization}-(1), we only have to prove
  the implication \textbf{(2)} $\Rightarrow$ \textbf{(1)}. Let us
  denote $\Omega := \Delta_a \setminus \{ \xi_1, \dots, \xi_k \}$ and
  define for $z \in \Omega$
\[
U(z) := \sum_{\ell=0} ^{n-1} (-1)^\ell \left( R_{B}(z) A
    \right)^\ell R_{B}(z) + (-1)^n \left( R_{\BB}(z) \AA
    \right)^n R_{\LL}(z).
\]
Observe that thanks to the assumptions {\bf (i$'$)-(ii)-(iii$'$)} and
{\bf (2)}, the operator $U(z)$ is well-defined and bounded on $E$.

\smallskip
\noindent {\it Step 1. $U(z)$ is a left-inverse of $(L-z)$ on
  $\Omega$.}  
For any $z \in \Omega$, we compute
 \begin{eqnarray*} 
   U(z) (L-z) 
   &=& \sum_{\ell = 0}^{n-1} (-1)^\ell \, \left(R_B(z)
     A\right)^\ell \, R_B(z)  \,  (A+(B-z )) \\
   &&
   \qquad + (-1)^n \,  \left(R_\BB(z) \AA\right)^n \, R_\LL(z) \, (L-z)  \\
   &=&  \sum_{\ell = 0}^{n-1} (-1)^\ell \, \left( R_B(z) A
   \right)^{\ell+1} 
   +  \sum_{\ell = 0}^{n-1} (-1)^\ell \, \left( R_B(z) A \right)^\ell \\
   &&
   \qquad + (-1)^n \,  \left(R_B(z) A \right)^n  =  \mbox{Id}_{E}. 
\end{eqnarray*} 

 \smallskip
\noindent
{\it Step 2. $(L-z)$ is invertible on $\Omega$.}  
Consider $z_0 \in \Omega$. First observe that if the
operator $(L - z_0)$ is bijective, then composing to the right the
equation 
\[
U(z_0) (L-z_0)= \mbox{Id}_{E}
\]
by $(L-z_0)^{-1}=R_L(z_0)$ yields $R_L(z_0) = U(z_0)$ and we deduce that the inverse map is bounded (i.e. $(L-z_0)$ is an invertible operator in $E$) together with the desired formula for the
resolvent. 

Since $(L-z_0)$ has a left-inverse it is injective.  Let us prove that
it is surjective. Consider $g \in E$. Since $\LL-z_0$ is invertible
and therefore bijective there is $f \in \EE$ so that
\[
(\LL -z_0)f = g \ \mbox{ and thus } \ \mbox{Id}+ R_\BB(z_0) \AA f =
  R_\BB(z_0) g = R_B(z_0) g. 
\]
We denote $\bar g := R_B(z_0)g \in E$ and $\GG(z_0) := R_\BB(z_0) \AA$
and write
\[
f = \bar g - \GG(z_0)f = \sum_{\ell=0} ^{n-1} (-1)^\ell
\GG(z_0)^\ell \bar g + (-1)^n \GG(z_0)^n f. 
\]
Because of {\bf (i$'$)-(ii)-(iii$'$)},  it implies that $f \in E$, and in fact since
$\mathscr D(B)=\mathscr D(L)$, we further have $f  \in
\mathscr D(L) \subset E$. We conclude that $(L-z_0)f = g$ in $E$,
and the proof of this step is complete.

\smallskip
\noindent
{\it Step 3. Spectrum, eigenspaces and spectral projectors.} 
On the one hand, we have
\[
\mathscr N (L-\xi_j)^\alpha \subset \mathscr N(\LL-\xi_j)^\alpha,
\quad j = 1, \dots, k, \quad \alpha \in \N,
\]
so that $\Sigma(L) \cap \Delta_a \subset \{ \xi_1, \dots, \xi_k \}$.
On the other hand, consider $\xi_j \in \Sigma(\LL) \cap \Delta_a$,
$\alpha \in \N^*$ and $f \in \mathscr N(\LL-\xi_j)^\alpha$:
\[
\left( \LL - \xi_j \right)^\alpha f = 0.
\]
Denote $g_\beta := \left( \LL - \xi_j \right)^\beta f$, $\beta = 0,
\dots, \alpha$ and argue by induction on $\beta$ decreasingly to prove
that $g_\beta \in E$. The initialisation $\beta = \alpha$ is
clear. Assume $g_{\beta+1} \in E$ and write $\left( \LL - \xi_j
\right) g_\beta = g_{\beta+1}.$ Using $\LL = \AA + \BB$ and composing
to the left by $R_\BB(\xi_j)$, we get
\[
\left( \GG(\xi_j) +\mbox{Id} \right) g_\beta = R_B(\xi_j) g_{\beta+1}
\in E \ \mbox{ with } \
\GG(\xi_j) := R_\BB(\xi_j) \AA. 
\]
We deduce that 
\[
g_\beta = (-1)^n \GG(\xi_j)^n g_{\beta+1} + \sum_{k=0} ^{n-1}
\GG(\xi_j)^k R_B(\xi_j) g_{\beta+1}. 
\]
Since $\GG(\xi_j)^n$ is bounded from $\EE$ to $E$, and $\GG(\xi_j)$ is
bounded from $E$ to $E$, with in each the range included in $\mathscr
D(B) = \mathscr D(L)$, we deduce that $g_\beta \in \mathscr D(L)
\subset E$, and the proof of the induction is complete. Finally $g_0 =
f \in \mathscr D(L) \subset E$. Since the eigenvalues are discrete,
this completes the proof of {\bf (1)}.

Finally, the fact that $\Pi_{\LL,\xi_j| E} = \Pi_{L,\xi_j}$ is a
straightforward consequence of $R_\LL(z) f = R_L(z)f$ when $f \in E$
and the formula \eqref{def:SpectralProjection} for the projection
operator.  This concludes the proof of {\bf (3)-(4)}.
\end{proof}



\subsection{Factorization and semigroup decay when changing spaces}
\label{sec:fact-decay}

We now prove Theorem~\ref{theo:Extension}. First we notice that the
assumptions of Theorem~\ref{theo:factor:equiv} are met since {\bf
  (i$'$)} follows from {\bf (i)} and {\bf (ii)}-{\bf (iii)} imply {\bf
  (iii$'$)}. Because of Theorem~\ref{theo:factor:equiv} we know that
$\mathscr R (\Pi_{\LL,a}) = \mathscr R (\Pi_{L,a}) \subset E$, and then
for any $f_0 \in \mathscr R (\Pi_{L,a})$, there holds
$$
\mathscr S_\LL(t) \, f_0 = \mathscr S_L(t) \, f_0 = \sum_{j=1}^k
e^{L_j t}\, \Pi_{L,\xi_j} f_0,
$$
where $L_j := L_{|X_j}$, $X_j := \mathscr
R(\Pi_{L,\xi_j})$. By linearity, it is enough to prove the equivalent
estimates \eqref{eq:estimSGL} and \eqref{eq:estimSGLL} in the
supplementary space of the subspace $\mathscr R(\Pi_{L,a})$. We split the proof
into two steps.

\medskip\noindent {\sl Step 1. Enlargement of the functional space. }
We give here an alternative presentation of the proof of {\bf (1)}
$\Rightarrow$ {\bf (2)} in Theorem~\ref{theo:Extension} which is in
the spirit of \cite{MR946973} while the original (but similar) proof
in \cite{GMM} uses an iterate Duhamel formula.  We assume
\eqref{eq:estimSGL} and denote $f_t := S_\LL(t) f_0$ the solution to
the evolution equation $\partial_t f = \LL f$.  We decompose
\begin{align*}
  f & = \Pi_{L,a} f_t + g^1 + g^2 + \dots + g^{n+1},\\
  \partial_t g^1 &= \BB g^1, \quad g^1_0 = f_0-\Pi_{L,a} f_0,
  \\
  \partial_t g^k &= \BB g^k + \AA g^{k-1}, \quad g^k_0
  = 0, \quad 2 \le k \le n,
  \\
  \partial_t g^{n+1} &= \LL g^{n+1} + \AA g^{n}, \quad
  g^{n+1} _0 = 0,
\end{align*}
and we remark that this system of equations on $g_k$, $1 \le k \le
n+1$, is compatible with the equation satisfied by $f$. Moreover, by
induction
\begin{align*} 
  \AA  g_k (t) 
  = (\AA \SS_\BB)^{(*k)} (t)(f_0-\Pi_{L,a} f_0), \quad 1 \le k \le n,
\end{align*}
so that $\AA g_n (t) \in E$, because of assumption {\bf (iii)}, and
thus the equation on $g_{n+1}$ is set in $E$ and writes
\begin{equation*}
  \partial_t g_{n+1} =  L g_n  + \AA g_{n}, \quad g_{n+1} (0) = 0.
\end{equation*}
We deduce successively the estimates (for $a'>a$)
\begin{align*}
  \| g_k(t) \|_{\EE} & \lesssim t^k e^{at} \| f_0 - \Pi_{L,a} f_0
  \|_\EE, \quad 1 \le k \le n, \\
  \| g_k(t) \|_{\EE} & \lesssim_{a'} e^{a't} \| f_0 - \Pi_{L,a} f_0
  \|_\EE, \quad 1 \le k \le n, \\
  \|\AA g_n(t) \|_{E} & \lesssim t^n e^{at} \| f_0 - \Pi_{L,a} f_0
  \|_\EE,\\
  \| (\Id - \Pi_{L,a}) g_{n+1}(t) \|_{\EE} & \lesssim \| (\Id - \Pi_{L,a})
  g_{n+1}(t) \|_{E} \la_{a'} e^{a't} \| f_0 - \Pi_{L,a} f_0 \|_\EE,
\end{align*}
and since, from the definition of the decomposition,
\begin{align*}
  \Pi_{L,a} g_{n+1} = - \Pi_{L,a} g_1 - \dots - \Pi_{L,a} g_n
\end{align*}
we have, using the previous decay estimates,
\begin{align*}
  \left\| \Pi_{L,a} g_{n+1}(t) \right\|_{\EE} \le \sum_{k=1} ^n \left\| \Pi_{L,a}
  g_k(t) \right\|_{\EE} \lesssim_{a'} e^{a't} \left\| f_0 - \Pi_{\LL,a} f_0 \right\|_\EE,
\end{align*}
which concludes the proof of \eqref{eq:estimSGLL} by piling up these
estimates on $f$.

\medskip\noindent {\sl Step 2. Shrinkage of the functional space. } We
assume \eqref{eq:estimSGLL} and $f_0 \in E$ and write the
following family of operators depending on time on $E$ through a
factorization formula: 
\begin{multline*}
  \mathscr S_*(t) = S_L(t) \Pi_{L,a} + \sum_{\ell=0} ^{n-1} (-1)^\ell \left( \mathscr S_{B}(t) A
    \right)^{(* \ell)} \mathscr S_{B}(t)(\Id - \Pi_{L,a}) \\ + (-1)^n \left( \mathscr S_{\BB}(t) \AA
    \right)^{(* n)} \mathscr S_{\LL}(t) (\Id - \Pi_{L,a}).
\end{multline*}
Using the assumptions and \eqref{eq:estimSGLL} one gets 
\begin{align*}
 \left\|  \mathscr S_{\LL}(t) (\Id - \Pi_{L,a}) \right\|_{\BBB(E,E)} \lesssim_{a'}
 e^{a't} \\
  \left\|  \left( \mathscr S_{\BB}(t) \AA \right)^{(* n)}
  \right\|_{\BBB(\EE,E)} \lesssim_{a'}
 e^{a't} \\
\left\|  \mathscr S_{B}(t) A \right\|_{\BBB(E,E)} \lesssim_{a'}
 e^{a't} 
\end{align*}
which proves that 
\begin{equation*}
  \left\|  \mathscr S_*(z) -  S_L(t) \Pi_{L,a} \right\|_{\BBB(E,E)} \lesssim_{a'}
 e^{a't}.
\end{equation*}
Therefore the Laplace transform $U_*(z)$ of $t \mapsto (\mathscr S_*(t) - S_L(t)
\Pi_{L,a})$ is well-defined on $\Re e \, z > a'$, and is 
\begin{equation*}
  U_*(z) = \sum_{\ell=0} ^{n-1} (-1)^\ell \left( R_{B}(z) A
    \right)^\ell R_{B}(z) (\Id - \Pi_{L,a}) + (-1)^n \left( R_{\BB}(z) \AA
    \right)^n R_{\LL}(z) (\Id - \Pi_{L,a})
\end{equation*}
which is exactly $U_*(z) = R_L(z) (\Id - \Pi_{L,a})$ from
Theorem~\ref{theo:factor:equiv}. By uniqueness of the Laplace
transform we deduce that $\mathscr S_*(t) = \mathscr S_L(t)$, and this
proves the decay \eqref{eq:estimSGL}. \qed


\subsection{A practical criterion}
\label{sec:practical-criterion}

We finally prove a criterion implying both {\bf (iii$'$)} in
Theorem~\ref{theo:factor:equiv} and {\bf (iii)} in
Theorem~\ref{theo:Extension}.

\begin{lem} \label{lem:Tn} Consider two Banach spaces $E$ and 
	$\EE$  such that $E \cap \EE$ is dense into $E$ and $\EE$ with 
	continuous embedding.  Consider $\LL$ an operator on $E+\EE$ so that there
  exist some operators $\AA$ and $\BB$ on $E+\EE$ such that $\LL$
  splits as $\LL = \AA + \BB$.  Denoting with the same letter $\AA$,
  $\BB$ and $\LL$ the restriction of these operators on $E$ and $\EE$,
  we assume that there hold:

\begin{itemize}
\item[{\bf (a)}]   $(\BB-a)$ is  hypodissipative in $E$ and $\EE$  for some  $a \in \R$;
  \item[{\bf (b)}]  $\AA \in \BBB( E)$  and $\AA \in \BBB(\EE)$;
  \item[{\bf (c)}] for some $b \in \R$ and $\Theta \ge 0$ there holds
    $\| \AA \mathscr S_{\BB}(t) \|_{\BBB(\EE,E) } \le C e^{b t}
    \,t^{-\Theta}$ and $\| \mathscr S_\BB(t) \AA \|_{\BBB(\EE,E)} \le
    C e^{bt} t^{-\Theta}$.
   \end{itemize}

\smallskip\noindent

Then for any $a' > a$, there is some constructive $n \in
\N$, $C_{a'} \ge 1$ such that
$$
\forall \, t \ge 0, \quad \| (\AA \mathscr S_\BB)^{(*n)}(t) \|_{\BBB(\EE,E)} +
\| (\mathscr S_\BB \AA)^{(*n)}(t) \|_{\BBB(\EE,E)} \le C_{a'} e^{ a'
  t}.
$$
As a consequence, $(\AA \, R_\BB(z))^n$ and $(R_\BB \AA)^n$ are
bounded from $\EE$ to $E$ for any $z \in \Delta_a$.
\end{lem}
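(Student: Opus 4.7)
My plan is to first refine the smoothing estimate in (c) so that its exponential growth rate matches the hypodissipativity rate $a$ from (a), then to iterate with $n=2$, and finally derive the resolvent consequence by Laplace transform. The main obstacle is the rate refinement: $b$ in (c) is a priori unrelated to $a$, and a naive iteration would yield an exponential rate $\max(a,b)$ rather than $a$.

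\emph{Refinement of (c).} For $t \ge 1$, the semigroup identity together with assumptions (a) and (c) gives
\[
  \|\AA \mathscr{S}_\BB(t)\|_{\BBB(\EE, E)}
  \le \|\AA \mathscr{S}_\BB(1)\|_{\BBB(\EE, E)} \, \|\mathscr{S}_\BB(t-1)\|_{\BBB(\EE)}
  \le C \, e^{a t},
\]
while for $t \in (0,1]$ the assumption reads $C\,e^{b t}\,t^{-\Theta} \le C'\,e^{a t}\,t^{-\Theta}$. Combining both ranges (and doing the same for $\mathscr{S}_\BB \AA$) one obtains
\[
  \|\AA \mathscr{S}_\BB(t)\|_{\BBB(\EE, E)} + \|\mathscr{S}_\BB(t) \AA\|_{\BBB(\EE, E)}
  \le K \, e^{a t} \, h(t), \qquad h(t) := \min(t, 1)^{-\Theta},
\]
for some $K \ge 1$; moreover (a)--(b) yield $\|\AA \mathscr{S}_\BB(t)\|_{\BBB(X)} \le K\,e^{a t}$ for $X \in \{E, \EE\}$.

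\emph{Iteration.} In the convolution
\[
  (\AA \mathscr{S}_\BB)^{(*2)}(t) = \int_0^t [\AA \mathscr{S}_\BB(t-s)] \, [\AA \mathscr{S}_\BB(s)] \, ds,
\]
I estimate the second factor by the $\EE \to E$ smoothing bound (carrying the singular $h(s)$) and the first factor by the regular $E \to E$ bound (valid because the image of the second factor lies in $E$). This yields
\[
  \|(\AA \mathscr{S}_\BB)^{(*2)}(t)\|_{\BBB(\EE, E)} \, \le \, K^2 \, e^{a t} \int_0^t h(s) \, ds \, \le \, K^2 \, e^{a t} \Bigl(\tfrac{1}{1-\Theta} + t\Bigr),
\]
provided $\Theta < 1$ (needed in any case for $\AA\mathscr{S}_\BB$ to be locally integrable in time, hence for the convolution to be well-defined). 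For any $a' > a$, the right-hand side is bounded by $C_{a'}\,e^{a' t}$. The symmetric argument with the companion bound on $\mathscr{S}_\BB \AA$ treats $(\mathscr{S}_\BB \AA)^{(*2)}$, so the conclusion holds with $n=2$.

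\emph{Resolvent consequence.} For $\Re e \, z > a'$, the semigroup Laplace identity $\int_0^\infty e^{-zt}\,\mathscr{S}_\BB(t) \, dt = -R_\BB(z)$ combined with Fubini's theorem and the boundedness of $\AA$ on $\EE$ yields
\[
  \int_0^\infty e^{-z t}\,(\AA \mathscr{S}_\BB)^{(*n)}(t)\,dt \, = \, (-1)^n\,(\AA R_\BB(z))^n \qquad \text{in } \BBB(\EE, E),
\]
with absolute convergence on the left-hand side by the bound just obtained. Since $a' > a$ is arbitrary, $(\AA R_\BB(z))^n \in \BBB(\EE, E)$ for every $z \in \Delta_a$; the same argument handles $(R_\BB(z) \AA)^n$.
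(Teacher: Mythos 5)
Your argument is correct only under the extra restriction $\Theta<1$, and the justification you give for that restriction is not valid, so there is a genuine gap. The lemma is stated for arbitrary $\Theta\ge 0$, and this generality is essential: the paper explicitly remarks after the lemma that the non-integrable time factor must be allowed because assumption (c) is later verified through hypoelliptic regularization estimates whose singularities are of order $t^{-3k/2}$ or $t^{-(5d+1)/2}$, far above $1$. Your parenthetical claim that $\Theta<1$ is ``needed in any case for the convolution to be well-defined'' is false: the convolution $(\AA\mathscr S_\BB)^{(*n)}(t)$ is perfectly well-defined as an element of $\BBB(\EE)$ using only the non-singular bound $\|\AA\mathscr S_\BB(t)\|_{\BBB(\EE)}\le Ce^{at}$ from (a)--(b); the issue is solely how to extract a $\BBB(\EE,E)$ bound. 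Your iteration cannot do this for $\Theta\ge 1$ no matter how large you take $n$: in any splitting of the $n$-fold convolution, exactly one factor must carry the $\EE\to E$ smoothing (all factors to its right are measured $\EE\to\EE$ and all to its left $E\to E$), and the region of the simplex where that factor's time argument is near $0$ always contributes a divergent $\int_0 s^{-\Theta}\,ds$.

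The missing idea is the one the paper uses: when $\Theta\ge1$, pick an integer $J$ with $\Theta/J<1$, introduce the complex interpolation spaces $\EE_j=[E,\EE]_{j/J}$, and use Riesz--Thorin to show that $\AA\mathscr S_\BB(t)$ maps $\EE_j\to\EE_{j+1}$ with only a $t^{-\Theta/J}$ singularity. The $J$-fold convolution then climbs the chain $\EE\to\dots\to E$ with each step locally integrable in time, and one concludes by convolving $\mathscr T_J:=(\AA\mathscr S_\BB)^{(*J)}$ with itself $\ell$ times to fix the exponential rate. On that last point your proposal actually contains a nice simplification worth keeping: your ``refinement of (c)'' (writing $\AA\mathscr S_\BB(t)=\AA\mathscr S_\BB(1)\,\mathscr S_\BB(t-1)$ for $t\ge1$) replaces the rate $b$ by $a$ directly, avoiding the paper's need to average the rates $a$ and $b$ by taking $\ell$ large; and your Laplace-transform derivation of the resolvent statement is fine. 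But as written the proof does not establish the lemma in the generality in which it is stated and used.
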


\begin{rem}
  It is necessary to include the non-integrable time factor in (c)
  for later application since (c) will be proved by hypoelliptic
  regularity which has this possibly non-integrable behavior at time
  zero.
\end{rem}

\begin{proof}[Proof of Lemma~\ref{lem:Tn}.] When $\Theta \ge 1$, we
  denote by $J$ the integer such that $\Theta < J \le \Theta+1$ and we
  set $\theta := \Theta/J \in [0,1)$.  We define the family of
  intermediate complex interpolation spaces $\EE_j = [E,\EE]_{j/J}$.
  Thanks to the Riesz-Thorin interpolation theorem, we have
$$
\AA \mathscr S_\BB(t)\ : \ \EE_{\delta,\delta'} := [[\EE_0,\EE_1]_\delta,
\EE_0]_{\delta'} \to \EE^{\delta,\delta'} := [[\EE_0,\EE_1]_\delta,
\EE_1]_{\delta'}
$$
with the following estimate on the operator norm
$$
\|\AA \mathscr S_\BB(t) \|_{ \EE_{\delta,\delta'} \to \EE^{\delta,\delta'} } \le
\|\AA \mathscr S_\BB(t) \|^{(1-\delta)(1-\delta')}_{ \EE_0 \to \EE_0 } \,
\|\AA \mathscr S_\BB(t) \|^{\delta (1-\delta')}_{ \EE_1 \to \EE_1} \,
\|\AA \mathscr S_\BB(t)
\|^{\delta'}_{ \EE_0 \to \EE_1 } .
$$
Since 
\begin{align*}
 \EE_{\delta,\delta'} & = [[\EE_0,\EE_1]_\delta, [\EE_0,\EE_1]_0]_{\delta'} 
= [\EE_0,\EE_1]_{(1-\delta')\delta}\\
\EE^{\delta,\delta'} & =  [[\EE_0,\EE_1]_\delta, [\EE_0,\EE_1]_1]_{\delta'}  
= [\EE_0,\EE_1]_{(1-\delta')\delta+\delta'},
\end{align*}
by taking $\delta' = 1/J$ and $\delta = j/(J-1)$, we get 
\begin{align*}
\|\AA \mathscr S_\BB(t) \|_{ \EE_j \to  \EE_{j+1} }
&\le \|\AA \mathscr S_\BB(t) \|^{1-(j+1)/J}_{ \BBB(E) } \, \|\mathscr S_\BB(t) \|^{j/J}_{ \BBB(\EE) } 
\,  \|\AA \mathscr S_\BB(t) \|_{\BBB(\EE,E)} ^{1/J } 
\\
&\lesssim {1 \over t^\theta} \, e^{[(1-1/J) a + b/J] t}.
\end{align*}
We define now $n := \ell \, J$ so that $(\AA \mathscr S_\BB)^{(*n)} =
\mathscr T_J^{(*\ell)}$ with $\mathscr T_J := (\AA \mathscr
S_\BB)^{(*J)}$. From the assumptions and the previous estimate, for
any $a'' > a$
$$
\|\mathscr T_J(t) \|_{E\to E} \lesssim_{a''} e^{a't}, \quad
\|\mathscr T_J(t) \|_{\EE \to E} \lesssim e^{bt}, \quad
\|\mathscr T_J(t) \|_{\EE\to \EE} \lesssim_{a''} \, e^{a't}.
$$
As a consequence, we obtain
$$
\|(\AA \mathscr S_\BB)^{(*n)} (t) \|_{\EE \to E} \lesssim_{a'} e^{[(1-1/\ell) a' + b/\ell] \, t },
$$
which concludes the proof by fixing $\ell$ large enough so
that $(1-1/\ell) a'' + b'/\ell < a'$. The estimate on $(\mathscr S_\BB
\AA)^{(*n)}$ is proved by the same argument. 
\end{proof}
 
%
%

 \bigskip\bigskip
 \section{The kinetic Fokker-Planck equation with flat confinement}
\label{sec:FPhomo}
\setcounter{equation}{0}
\setcounter{theo}{0}

This section is dedicated to the proof of semigroup decay estimates
for the kinetic Fokker-Planck equation~\eqref{eq:FP1} where the
confinement is ensured by spatial periodicity, for initial data in a
large class of Banach spaces, including the case of negative Sobolev
spaces, and with slow decay at large velocities. We deduce decay
estimates in Wasserstein distance as well. Our results apply to the
simpler case where the solution is spatially homogeneous and
solves~\eqref{eq:FP1bis}.

\subsection{Main result }
\label{sec:FP-MainR}

Consider the Fokker-Planck equation
\begin{equation}\label{eq:FPhomog}
  \partial_t f = \mathcal L f :=  \nabla_v \cdot \left( \nabla_v f +  F  f \right) - v
  \cdot \nabla_x f,
\end{equation}
on the density $f=f(t,x,v)$, $t \ge 0$, $x \in \T^d$ (the torus'
volume is normalised to one), $v \in \R^d$, where the (exterior) force
field $F = F(v) \in \R^d$ takes the form
\begin{equation}\label{def:forceF}
  F = \nabla_v \Phi \quad \mbox{with} \quad \fa |v| \ge R_0, 
  \ \Phi (v) = {1 \over \gamma} \,  \langle v \rangle^\gamma + \Phi_0 
\end{equation}
for some constants $R_0 \ge 0$ and $\gamma \ge 1$. Here and below, we
denote $\langle v \rangle := (1 + |v|^2)^{1/2}$.  We define $\mu (v)
:= e^{- \Phi (v)}$ with $\Phi_0 \in \R$ such that $\mu$ is a
probability measure. Observe that $\mu$ is a steady state for the
evolution equation \eqref{eq:FPhomog}. We shall consider separately
along this section the case where $f$ does not depend on $x$,
commenting on the simpler proofs and sharper estimates in this case.

Let us now introduce the key assumptions:

\medskip

\begin{center} {\bf Assumptions on the functional spaces} \end{center}
\smallskip

\noindent {\em \underline{Polynomial weights}}: For any
  $\gamma \ge 2$, $\sigma \in \{-1,0,1\}$ and $p \in [1,\infty]$, we
  introduce the weight functions
\begin{equation}\label{eq:def-wmpoly}
  m:= \langle v \rangle^k, \quad 
 k > |\sigma| + |\sigma| \sqrt{d} (\gamma-2) + \left(1-{1 \over p}\right) (d + \gamma-2) \
\end{equation}
 and the abscissa 
\begin{equation*}
  a_\sigma(p,m) := 
  \left\{ 
    \begin{array}{ll}
      |\sigma| + (1-1/p) d - k  & \hbox{if} \ \gamma = 2,\\[0.3cm] 
      -\infty  & \hbox{if} \ \gamma > 2.
    \end{array}
  \right.
\end{equation*}
\smallskip

\noindent
{\em \underline{Stretched exponential weights}}: For any $\gamma \ge
  1$, $\sigma \in \{-1,0,1\}$ and $p \in [1,\infty]$, we introduce the
  weight functions
\begin{align}\label{eq:def-wmexpo}
  m:= e^{\kappa \, \langle v \rangle^s} \,\,\,\, & \hbox{with} \  s \in
  [2-\gamma,\gamma), \,\, \kappa > 0, \,\, s > 0, \\ \nonumber &
  \hbox{or with}  \  s=\gamma, \,\, \kappa \in (0,1/\gamma),
\end{align}
 and the abscissa 
\begin{align*}
a_\sigma(p,m) &:=  
\left\{ 
    \begin{array}{ll}
      \kappa^2-\kappa & \hbox{if} \ \gamma=s=1,\\[0.3cm] 
      - \kappa s & \hbox{if} \ \gamma+s=2, \ s <\gamma, \\[0.3cm]
      -\infty  & \hbox{in the other cases}.
    \end{array}
  \right.
\end{align*}
 \smallskip

\noindent
{\em \underline{Definition of the spaces}}: For any weight function
  $m$, we define $L^p(m)$, $1 \le p \le \infty$, as the Lebesgue
  weighted space associated to the norm
$$
 \|f \|_{L^p(m)}:= \|f \, m \|_{L^p},
$$
and $W^{1,p}(m)$, $1 \le p \le \infty$, as the Sobolev weighted space
associated to the norm
$$
 \|f \|_{W^{1,p}(m)}:= \left(\| m \, f  \|_{L^p}^p + \| m \, \nabla f  \|_{L^p}^p\right)^{1/p}
 $$
 when $p \in [1,\infty)$ and 
 $$
 \|f \|_{W^{1,\infty}(m)}:= \max \left\{ \| m \, f \|_{L^\infty} , \|
   m \, \nabla f\|_{L^\infty} \right\}.
 $$
We also define $W^{-1,p}(m)$, $p \in [1,\infty]$, as the 
weighted  negative Sobolev   space associated to the dual norm
\begin{equation}\label{def:W-1p}
  \|f \|_{W^{-1,p}(m)}:=  \|f \, m \|_{W^{-1,p}}:=  
  \sup_{ \|\phi \|_{W^{1,p'} } \le 1}  \langle f, \phi \, m \rangle,
  \quad p' := \frac{p}{p-1},
\end{equation}
where it is worth insisting that in this last equation the condition
$\|\phi \|_{W^{1,p'} } \le 1$ refers to the standard Sobolev space
$W^{1,p'} $ (without weight).

\medskip

Observe that for $\sigma\in \{-1,0,1\}$, $p \in [1,\infty]$ and $m$
satisfying \eqref{eq:def-wmpoly} or \eqref{eq:def-wmexpo}, the Sobolev
space $W^{\sigma,p}(m)$ defined as above is such that $1 \in
W^{{\sigma',p'}}(m^{-1})$ with $\sigma' = - \sigma \in \{-1,0,1\}$ and
$p' := p/(p-1) \in [1,\infty]$.  As a consequence, for any $f \in
W^{\sigma,p}(m)$, we may define the ``{\it mass of $f$}'' by
$$
\langle \langle f \rangle \rangle := \langle f, 1
\rangle_{W^{\sigma,p}(m),W^{{\sigma',p'}}(m^{-1})} = \left\langle m \,
  f, \frac1m \right\rangle_{W^{\sigma,p},W^{\sigma',p'}}
 $$
 where the double bracket recalls that there are two variables $x$ and
 $v$. In the case $\sigma=0,1$, there holds $W^{\sigma,p}(m) \subset
 L^{1}$ (here $L^1$ denotes the usual Lebesgue space without weight)
 and therefore the {\it ``mass of $f$''} corresponds to the usual
 definition
\[
\langle \langle f \rangle \rangle := \int_{\T^d \times \R^d} f(x,v) \dd
x \dd v
\]
and else this is the mass of the associated measure. Observe also that
when $f$ does not depend on $x$, this reduces thanks to the
normalisation of the torus volume to
\[
\langle \langle f \rangle \rangle = \langle f \rangle := \int_{\R^d}
f(v) \dd v.
\]

We finally define the projector $\Pi_1^\perp$ on the orthogonal
supplementary of the first eigenspace:
$$
\forall \, f \in W^{\sigma,p}(m), \quad \Pi_1^\perp f := f - \langle
\langle f \rangle \rangle \, \mu.
$$

\medskip

\begin{theo}\label{theo:vFP} 
  Consider $\sigma \in \{-1,0,1\}$ and $m$, $p \in [1,+\infty]$ that
  satisfy conditions \eqref{eq:def-wmpoly} or \eqref{eq:def-wmexpo}
  above (this implies $a_\sigma(p,m) < 0$). For any
  $a > \max\left\{a_\sigma(p,m),-\lambda \right\}$, there exists
  $C_a = C_a(\sigma,p,m)$ such that for any
  $f_0,g_0 \in W^{\sigma,p}(m)$ with the same mass, there holds
  \begin{equation} \label{eq:theovFPprelim} \left\| \mathscr
      S_{\LL}(t) f_{0} - \mathscr S_{\LL}(t) g_{0}\right\|_{
      W^{\sigma,p}(m)} \le C_{a} \, e^{at} \, \left\| f_{0} - g_0
    \right\|_{W^{\sigma,p}(m)},
\end{equation}
which implies in particular the relaxation to equilibrium 
  \begin{equation} \label{eq:theovFP} \left\| \mathscr S_{\LL}(t) f_{0} -
    \langle f_0 \rangle \mu \right\|_{ W^{\sigma,p}(m)} \le C_{a} \, e^{at}
    \, \left\| f_{0} - \langle f_0 \rangle \mu \right\|_{W^{\sigma,p}(m)},
\end{equation}
where $\lambda := \lambda(d,\sigma,p,m) > 0$ is constructive from the
proof. 

Moreover, when $\gamma \in [2,2 + 1/(d-1))$, there exists $\tilde
a(\gamma) < 0$ and for any $a > \tilde a(\gamma)$ there exists $C_a
\in (0,\infty)$ so that for any probability measures $f_0$, $g_0$ with
bounded first moments, there holds
\begin{equation}\label{eq:theovFPW1prelim}
   W_1\left(\mathscr S_{\LL}(t) f_{0} , \mathscr S_{\LL}(t) g_{0} \right)
   \le  C_a \, e^{a \, t} \, W_1( f_{0} , g_0)
\end{equation}
which implies the relaxation to equilibrium
\begin{equation}\label{eq:theovFPW1}
W_1\left(\mathscr S_{\LL}(t) f_{0} , \langle f_0 \rangle \mu \right)
   \le C_a \, e^{a \, t} \,
  W_1( f_{0} , \langle f_0 \rangle  \mu ).
\end{equation}
\end{theo}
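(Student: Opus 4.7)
The plan is to invoke the abstract enlargement/shrinkage Theorem~\ref{theo:Extension} with $E$ the reference space in which a hypocoercive decay of $\mathscr S_L$ is already known---namely $L^2(\mu^{-1/2})$ or $H^1(\mu^{-1/2})$---and $\EE := W^{\sigma,p}(m)$ the target space. I would construct a splitting $\LL = \AA + \BB$ by taking $\AA := M\,\chi_R(v)$ with $\chi_R$ a smooth cutoff supported in $\{|v|\le R\}$ and $M,R>0$ tuning parameters; then $\AA$ is bounded on every $W^{\sigma,p}(m)$ (verifying hypothesis {\bf (ii)}), and $\BB := \LL - \AA$ inherits the confining structure.

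To verify hypothesis {\bf (i)}, I would compute directly in $L^p(m)$ (case $\sigma=0$): writing $\partial_t f = \BB f$ and integrating by parts yields an identity of the form
\[
\frac1p\frac{d}{dt}\|f\|_{L^p(m)}^p = \int \varphi_{m,p}\,|f|^p m^p \, dx\,dv - (p-1)\!\int\! |\nabla_v f|^2 |f|^{p-2} m^p \, dx\,dv,
\]
where $\varphi_{m,p}$ is a weight multiplier explicitly computable from $m, p$ and $F=\nabla_v\Phi$. The growth conditions on $k$ (or on $\kappa,s$) in \eqref{eq:def-wmpoly}--\eqref{eq:def-wmexpo} have been calibrated precisely so that $\varphi_{m,p}(v)\le a_\sigma(p,m)$ outside a ball, and the transport term $-v\cdot\nabla_x$ contributes nothing because it is antisymmetric for the duality used. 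Choosing $M,R$ large enough so that $-\AA$ absorbs the bounded region yields $(\BB-a)$-hypodissipativity for any $a>a_\sigma(p,m)$. The case $\sigma=1$ follows by applying the previous estimate to $\nabla f$ together with commutator bounds on $[\BB,\nabla_v]$ and $[\BB,\nabla_x]$; the case $\sigma=-1$ is handled by duality, studying the conjugated adjoint $\BB_m^*$ acting on $W^{1,p'}(m^{-1})$ via the pairing in \eqref{def:W-1p}. I expect the main technical obstacle to be this precise calibration of $\varphi_{m,p}$ uniformly in $p\in[1,\infty]$ and $\sigma\in\{-1,0,1\}$, which is what fixes the thresholds in \eqref{eq:def-wmpoly}--\eqref{eq:def-wmexpo}.

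Hypothesis {\bf (iii)} is established via Lemma~\ref{lem:Tn}. Its assumption {\bf (b)} is immediate from the boundedness of $\AA$; its assumption {\bf (c)}, namely $\|\AA\mathscr S_\BB(t)\|_{\BBB(\EE,E)}\lesssim t^{-\Theta} e^{bt}$, follows from a short-time regularisation estimate: in the kinetic case one appeals to the H\'erau--Villani quantitative global hypoellipticity \cite{Herau2007,MR2562709}, which simultaneously gains regularity in $v$ (from the Fokker--Planck part) and in $x$ (from the commutator of $v\cdot\nabla_x$ with $\nabla_v$), while in the spatially homogeneous case a Nash-type ultracontractivity estimate \cite{MR0100158} suffices. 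The cutoff $\chi_R$ ensures that the weight $m$ is pointwise dominated by the Gaussian $\mu^{-1/2}$ on the support of $\AA$, so that the passage from $\EE$ to $E$ through $\AA\mathscr S_\BB(t)$ loses no weight. Lemma~\ref{lem:Tn} then produces the required $n\ge 1$, and Theorem~\ref{theo:Extension}, applied with $k=1$, $\xi_1=0$ and $\Pi_{1,L}f=\langle\langle f\rangle\rangle\mu$, delivers \eqref{eq:theovFPprelim}; the estimate \eqref{eq:theovFP} follows on taking $g_0=\langle\langle f_0\rangle\rangle\mu$.

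For the Wasserstein bound \eqref{eq:theovFPW1prelim}, I would exploit Kantorovich--Rubinstein duality $W_1(f,g)=\sup_{\|\phi\|_{\mathrm{Lip}}\le 1}\langle f-g,\phi\rangle$. Since $\T^d$ is compact, a Lipschitz test function on $\T^d\times\R^d$ is controlled, on probability measures with bounded first velocity moment, by a weighted $W^{1,\infty}(\langle v\rangle^{-1})$ norm---so $W_1$ is comparable to the $W^{-1,1}(\langle v\rangle)$ norm defined in \eqref{def:W-1p}. Applying the decay \eqref{eq:theovFPprelim} already proved with $\sigma=-1$, $p=1$ and $m=\langle v\rangle^k$ with $k$ just above $1$ then yields the conclusion; admissibility of such a $k$ in condition \eqref{eq:def-wmpoly} (which, with $\sigma=-1$ and $p=1$, reads $k>1+\sqrt{d}(\gamma-2)$) forces precisely the restriction $\gamma\in[2,2+1/(d-1))$. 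The relaxation bound \eqref{eq:theovFPW1} follows on setting $g_0=\langle\langle f_0\rangle\rangle\mu$.
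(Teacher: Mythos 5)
Your treatment of the decay in $W^{\sigma,p}(m)$ follows the paper's own route essentially step by step (same splitting $\AA=M\chi_R$, $\BB=\LL-\AA$, same weight-multiplier calibration for the dissipativity of $\BB$ in $L^p(m)$, $W^{1,p}(m)$ and, by duality on $\BB_m^*$, in $W^{-1,p}(m)$, same use of the hypoelliptic regularisation plus Lemma~\ref{lem:Tn} to verify {\bf (iii)}, and the same application of Theorem~\ref{theo:Extension} with $k=1$, $\xi_1=0$), so that part is fine.

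The $W_1$ part, however, has a genuine gap. The norm $\|f\|_{W^{-1,1}(\langle v\rangle^k)}$ defined in \eqref{def:W-1p} tests $f$ against functions of the form $\psi=\phi\,\langle v\rangle^k$ with $\|\phi\|_{W^{1,\infty}}\le 1$; such $\psi$ satisfy $|\nabla_v\psi|\lesssim\langle v\rangle^k$, so even for $k=1$ this test class is strictly \emph{larger} than the class of $1$-Lipschitz functions vanishing at the origin. One gets $W_1(f,g)\lesssim\|f-g\|_{W^{-1,1}(\langle v\rangle)}$ for measures of equal mass, but not the reverse inequality, so the decay in $W^{-1,1}(\langle v\rangle^k)$ does not yield \eqref{eq:theovFPW1prelim} with $W_1$ on the right-hand side. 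This is precisely why the paper introduces the bespoke space $\FF_\infty$ with norm $\max\{\|\psi\langle v\rangle^{-1}\|_{L^\infty},\|\nabla_x\psi\|_{L^\infty},\zeta\|\nabla_v\psi\|_{L^\infty}\}$, whose dual \emph{is} equivalent to $W_1$ on probability measures of equal mass by Kantorovich--Rubinstein, and proves a separate dissipativity estimate for $\BB$ in $(\FF_\infty)'$ (Lemma~\ref{lem:FPhomo-BLipPrime}). Relatedly, you mislocate the origin of the restriction $\gamma\in[2,2+1/(d-1))$: it does not come from the admissibility condition \eqref{eq:def-wmpoly} (which for $\sigma=-1$, $p=1$ merely asks $k>1+\sqrt d(\gamma-2)$ and imposes no upper bound on $\gamma$), but from the rate $\tilde a_\gamma=(d-1)(\gamma-2)-1$ produced by the off-diagonal terms $\partial_{v_i}F_j$ in the dual estimate on $\nabla_v\psi$ in that lemma, which is negative exactly when $\gamma<2+1/(d-1)$. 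To close your argument you would need to redo the dissipativity computation for $\BB^*$ on the genuine Lipschitz-type space rather than on $W^{1,\infty}$ conjugated by the weight.
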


\begin{rems}  We first list the remarks in the spatially homogeneous case.

\begin{enumerate}

\item For $m = \mu^{-1/2}$, $p=2$ and $\sigma = 0$, \eqref{eq:theovFP}
  reduces to the classical spectral gap inequality for the
  Fokker-Planck semigroup in $L^2(\mu^{-1/2})$. 
  In that case the  semigroup spectral gap is equivalent to the Poincar\'e inequality.
  Denoting as $\lambda_P$ the best constant  in the Poincar\'e inequality, the estimate
  \eqref{eq:theovFP} holds with $a = - \lambda_{P}$ and $C_a = 1$.
  
\item Our proof in the general case is  based on the above mentioned semigroup spectral gap estimate in
  $L^2(\mu^{-1/2})$ and on the abstract extension Theorem~\ref{theo:Extension}.  
 More precisely, our approach allows one to prove an equivalence between
  Poincar\'e's inequality and semigroup decay of the Fokker-Planck
  equation in Banach spaces, including the case of negative Sobolev
  spaces. The meaning of the sentence is that the functional
  inequality \beqn\label{eq:PoincarIneq} \forall \, a' > a, \quad (\LL
  f , f)_{L^2(\mu^{-1/2})} \le a' \, \| \Pi_1^\perp f
  \|^2_{L^2(\mu^{-1/2})} \eeqn is equivalent to the semigroup decay
  estimate \eqref{eq:theovFP} for a large class of weight function
  $m$.

\item  For $\gamma \ge 2$, it has been proved recently in \cite{BGG}
  that a semigroup decay estimate similar to \eqref{eq:theovFP}
  holds for the Monge-Kantorovich-Wasserstein distance $W_{2}$, or in
  other words that for any probability measure $f_{0}$ with bounded
  second moment, there holds
\begin{equation}\label{eq:theovFPW2}
 W_2(\mathscr S_\LL(t) f_{0} , \langle f_0 \rangle  \mu ) \le C \, e^{\alpha \, t} \, 
  W_2( f_{0} , \langle f_0 \rangle  \mu ).  
  \end{equation}
  In the above inequality $C=1$ and $-\alpha$ is the optimal constant
  in the {\it ``$W\!J$ inequality''} (introduced in \cite[Definition
  3.1]{BGG}), which corresponds to the optimal constant in the {\it
    ``log-Sobolev inequality''} for convex potential and in particular
  $-\alpha$ is smaller than the optimal constant $\lambda_P$ in the
  Poincar\'e inequality \eqref{eq:PoincarIneq}.  Our estimate
  \eqref{eq:theovFPW1} can be compared to
  \eqref{eq:theovFPW2}. However we note that it had not been proved
  yet in the probability literature, even in the spatially
  homogeneous case, that the Poincar\'e inequality implies the
  convergence in $W_1$ distance (whereas the converse is known, see
  for instance \cite{MR3262508}).

\item It is worth emphasizing that in Theorem~\ref{theo:vFP}, the
  function space can be chosen smaller in term of tail decay than the
  space of  self-adjointness $L^2(\mu^{-1/2})$: one can choose for
  instance $L^2(\mu^{-\theta/2})$ with $\theta \in (1,2)$.

\item Note that this statement implies in particular that for a strong
  enough weight function, so that the essential spectrum move far
  enough to the left, there holds
\[
\Sigma\left(\LL\right) \subset \{ z\in \C \; | \; \Re e (z) \le
 -\lambda_{P}  \}\cup \{ 0\} 
\]
and that the null space of $\LL$ is exactly $\R \mu$.

\item Moreover, thanks to Weyl's Theorem, we know that in the
  $L^2(\mu^{-1/2})$ space the spectrum is constituted of discrete
  eigenvalues denotes as $\xi_\ell$, $ \ell \in \N$, with $\ell
  \mapsto \Re e \xi_\ell$ decreasing.  In any Banach space
  $W^{\sigma,p}(m)$, exactly the same proof as for
  Theorem~\ref{theo:vFP} (same splitting $\LL=\AA+\BB$ and same
  application of the abstract extension Theorem~\ref{theo:Extension})
  yields to the more accurate description of the spectrum
\[
\Sigma\left(\LL\right) \cap \Delta_{a_\sigma(p,m)} =  \{ \xi_\ell;  \; \Re e (\xi_\ell) > a_\sigma(p,m) \}  
\]
as well as the more accurate estimate \eqref{eq:estimSGLL} for any $a
> a_\sigma(p,m)$ and with $k$ defined by $k = \sup \{ \ell; \;\Re e
(\xi_\ell) > a_\sigma(p,m) \}$.

\item As a consequence of the preceding point, we may improve the
  intermediate asymptotic for the heat equation established in
  \cite{MR2727993}. Consider $g$ the solution to the heat equation
$$
\partial_t g = \Delta_v g, \quad g(0) = g_0,
$$
with $g_0 \in L^p(m)$, $m = \langle v \rangle^k$, $k > d/p' + n - 1$,
$n \in \N^*$. Assume furthermore that
$$
\forall \, \ell \in \N^d, \,\, |\ell| \le  n-1, \quad \int_{\R^d} g_0 \, H_\ell \dd x = 0,
$$
where $(H_\ell)$ stands for the family of Hermite polynomials (see
\cite{MR2727993} and the references therein).  In particular $\langle
g_0 \rangle = 0$ since $H_0 = 1$.  We observe that the function $f$
defined thanks to
$$
g(t,x) = R^{-d} \, f(\log R, v/R), \quad R = R(t) = \sqrt{1+2t}, 
$$
is a solution to the harmonic Fokker Planck equation 
$$
\partial_t f = \LL f =  \Delta_v f + \hbox{div}_v (v f), \quad f(0) = g_0,
$$
and that $(H_\ell)$ is an orthogonal family of eigenfunctions
associated to the adjoint operator $\LL^*$ ($H_\ell$ is associated to
the eigenvalue $|\ell| = \ell_1 + \dots + \ell_d$ for any $\ell \in
\N^d$).  An immediate application of our method implies
$$
 \left\| f_t   \right\|_{ L^p(m)} \le C_{d,p,n} \, e^{-nt}
    \, \left\| g_{0}  \right\|_{L^p(m)} \quad \forall \, t \ge 0, 
$$
which improves \eqref{eq:theovFP} (which holds in that context with
$a=-\lambda_P = -1$) whenever $n \ge 2$.  Coming back to the function
$g$ we obtain the optimal intermediate asymptotic estimate
$$
 \left\|  g_t   \right\|_{ L^p(m)} \le  { C_{d,p,n}  \over (1+t)^{n/2 + d/(2p')} } 
    \, \left\| g_{0}  \right\|_{L^p(m)} \quad \forall \, t > 0 . 
$$
That last estimate improves \cite[Corollary 4]{MR2727993} because the
range of initial data is larger and the rate in time is better (it is
in fact optimal).
\end{enumerate}
\end{rems}

\begin{rems} We now list the remarks specific to the spatially
  periodic case.
\begin{enumerate} 
\item The value of $\lambda$ in our quantitative estimate is related
  to the hypocoercivity estimate in $L^2(\mu^{-1})$ setting. However
  the best rate in general is the real part of the second eigenvalue
  defined by
\begin{equation}\label{def:FPhomo:lambda}
  \lambda :=
  \sup_{\| \cdot \| \sim \| \cdot
    \|_{W^{\sigma,p}(m)} } \inf_{ f \in C^\infty _c(\R^{d}) } 
 \left(  -  \frac{ \langle \LL f, f^* \rangle}{\| \Pi_1^\perp f \|} \right)
\end{equation}
where $C^\infty_c(\R^d)$ denotes the smooth compactly supported
functions, and where the supremum is taken over all norms
$\| \cdot \|$ on $W^{\sigma,p}(m)$ equivalent to the ambiant norm, and
where $f^* \in W^{{\sigma',p'}}(m)$ is the unique element in
$W^{{\sigma',p'}}(m)$ such that
$\| f \|^2 = \| f^*\|_*^2 = \langle f^*,f \rangle$, where
$\| \cdot \|_*$ is the corresponding dual norm. Let us mention that
similar results have been proved for diffusion processes in
\cite{MR2661206}.
 
\item Our result partially generalize to a spatially inhomogeneous
  setting the estimate on the Monge-Kantorovich-Wasserstein distance
  obtained recently in \cite{BGG}.
  


\item Our proof is based on the semigroup spectral gap estimate in
  $H^1(\mu^{-1/2})$ established in \cite{MNeu,MR2562709} and on the
  abstract extension Theorem. As a consequence, it gives an
  alternative proof for the semigroup spectral gap estimate obtained
  in \cite{DMScras,DMS} for the Lebesgue space $L^2(\mu^{-1/2})$.

\item Again, the proof holds for $F := \nabla \Phi + U$ which fulfills
  the conditions of \cite[section 3]{GMM}. In particular the
  associated Fokker-Planck operator does not take the $A A^* + B$
  structure of \cite{MR2562709} (where the term $\nabla_v ( U f)$ is
  included in the ``$B$'' part). 
\end{enumerate}
\end{rems}

The proof of Theorem~\ref{theo:vFP} is split into several steps:

\begin{enumerate}
\item We recall existing results for proving \eqref{eq:theovFP} in the
  space of $E= H^1(\mu^{-1/2})$: 
\begin{lem}\label{theo:KFPtorus-gapH1} {\bf (\cite[Theorem
    1.1]{MNeu})} The result in Theorem~\ref{theo:vFP} is true in
  the Hilbert space $H^1(\mu^{-1/2})$ associated to the norm
  $$
  \| f \|_{H^1(\mu^{-1/2})} := \left( \|f \|_{L^2(\mu^{-1/2})}^2 +
    \|\nabla_x f \|_{L^2( \mu^{-1/2})}^2 + \|\nabla_v f \|_{L^2(
      \mu^{-1/2})}^2 \right)^{1/2}.
  $$
\end{lem}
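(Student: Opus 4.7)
Since this lemma is stated as a direct recall of \cite[Theorem 1.1]{MNeu}, the plan is essentially to invoke the hypocoercivity technique developed by Mouhot--Neumann (following the ideas of Villani \cite{MR2562709}) rather than reprove it from scratch. The strategy produces an equivalent Hilbertian norm on $H^1(\mu^{-1/2})$ for which the operator $\LL$ is coercive on the orthogonal of its one-dimensional kernel $\R\mu$.

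Concretely, I would construct the twisted norm
\[
\Nt f\Nt^2 := \|f\|_{L^2(\mu^{-1/2})}^2 + \alpha\|\nabla_v f\|_{L^2(\mu^{-1/2})}^2 + 2\beta \langle \nabla_v f, \nabla_x f\rangle_{L^2(\mu^{-1})} + \gamma \|\nabla_x f\|_{L^2(\mu^{-1/2})}^2
\]
with constants $\alpha,\beta,\gamma > 0$ chosen so that $\beta^2 < \alpha\gamma$ (ensuring equivalence with the canonical $H^1(\mu^{-1/2})$ norm) and further tuned below. The standard computation then differentiates $\Nt f_t \Nt^2$ along the flow $\partial_t f = \LL f$: the Fokker--Planck block $\nabla_v \cdot(\nabla_v + F)$ furnishes dissipation of $\nabla_v f$ and, via the Poincar\'e inequality for $\mu$, a gain on $\Pi_1^\perp f$ in the $v$ variable.

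The heart of hypocoercivity is to recover coercivity in $x$ despite the transport $-v\cdot\nabla_x$ being skew-symmetric. This comes from differentiating the mixed term $\langle \nabla_v f, \nabla_x f \rangle_{L^2(\mu^{-1})}$ and exploiting the commutator identity $[\nabla_v, v\cdot\nabla_x] = \nabla_x$, which, after integration by parts, contributes a negative term proportional to $-\|\nabla_x f\|_{L^2(\mu^{-1/2})}^2$ to the time derivative of $\Nt f_t \Nt^2$.

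The main obstacle is the careful algebraic tuning of the constants $\alpha,\beta,\gamma$ so that all spurious cross contributions (from $v\cdot\nabla_x$, from the force $F$, and from the second-order derivative couplings) can be absorbed via Young's inequality into the leading dissipative terms; this is an ordering game typically requiring $\alpha \gg \beta \gg \gamma$ with all three small. Once this is achieved one obtains
\[
\dt \Nt f_t \Nt^2 \le -2\lambda \Nt f_t \Nt^2 \qquad \text{whenever }\langle\langle f_0 \rangle\rangle = 0,
\]
for some explicit $\lambda > 0$, and by norm equivalence this yields the announced exponential decay \eqref{eq:theovFPprelim} in the canonical $H^1(\mu^{-1/2})$ norm with a constant $C_a \ge 1$. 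This decay is precisely the seed estimate that will be fed into the abstract enlargement/shrinkage Theorem~\ref{theo:Extension} to reach the weaker weights and lower regularity claimed in Theorem~\ref{theo:vFP}.
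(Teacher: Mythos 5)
The paper does not prove this lemma at all: it is quoted verbatim from \cite[Theorem 1.1]{MNeu}, with pointers to \cite{MR2034753,Herau2007,MR2130405,DMScras,DMS}. Your sketch is a faithful reconstruction of the hypocoercivity argument of that reference (twisted $H^1(\mu^{-1/2})$ norm with a cross term $\langle \nabla_v f,\nabla_x f\rangle$, the commutator $[\nabla_v, v\cdot\nabla_x]=\nabla_x$ producing the missing $x$-dissipation, tuning of the coefficients by Young's inequality, and a Poincar\'e inequality on $\T^d\times\R^d$ to close), so it is consistent with what the paper relies on.
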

Such a result has been proved in \cite{MNeu}, see also
\cite{MR2034753,Herau2007,MR2130405,DMScras,DMS}. 

\item We devise an appropriate decomposition $\LL = \AA + \BB$ with
  $\BB = \LL - M \chi_R$ where $\chi_R$ is a smooth characteristic
  function of the set $|v| \le R$ with $M |\nabla_v \chi_R|$ small.

\item We need then to establish the dissipativity of $\BB$ in the
  spaces $W^{\sigma,p}(m)$ and of $B := \BB_{|E}$ in $E$. The
  coercivity of $\BB$ in these spaces is established in
  Lemma~\ref{lem:FPhomo-BLp}, \ref{lem:FPhomo-BW1p} and
  \ref{lem:FPhomo-BW-1}. The coercivity of $B$ in $E$ follows also
  from the same Lemma since the weight $m=\mu^{-1/2}$ is allowed. The
  latter could be proved by adapting the proof of \cite[Theorem
  1.1]{MNeu}. Or finally it could be checked more generally that the
  coercivity of $B$ in $E$ follows from that of $L$ combined with the
  strengthened Poincar\'e inequality as described below.

\item We prove that the semigroup $\mathscr S_\BB(t)$ is regularizing
  in $L^2(\mu^{-1/2})$.

\item We conclude by applying Theorem~\ref{theo:Extension}.
\end{enumerate}

\begin{rem}
  Observe that since we need only applying regularization estimates
  for the semigroup of $\mathcal B$ after a composition by the
  operator $\mathcal A$, it is enough to prove these regularisation
  estimates with the usual weight $\mu^{-1/2}$.
\end{rem}

\subsection{Simplifications in the spatially homogeneous case} 
Let us start by pointing out the simplifications in the spatially
homogeneous case. First the decay~\eqref{eq:theovFP} in the space $E=
L^2(\mu^{-1/2})$ follows from the Poincar\'e inequality:

\begin{lem}\label{lem:PoincareInegalite} There exists a constant
  $\lambda_P >0$ so that for $f \in \DD(\R^d)$ with $\langle f \rangle = 0$
  \begin{equation}\label{eq:Poincare}
  \int_{\R^d} \left| \nabla_v \left( \frac{f}{\mu} \right) \right|^2
  \, \mu(v) \dd v  
  \ge  \lambda_P \, \int_{\R^d} f^2 \,  \, \mu^{-1}(v) \dd v  
\end{equation}
and moreover for $\lambda < \lambda_{P}$, there is $\eps(\lambda) > 0$
so that
\begin{eqnarray*}\label{eq:Poincare-fort}
  \int_{\R^d} \left| \nabla_v \left( \frac{f}{\mu} \right) \right|^2
  \, \mu(v) \dd v  
  &\ge&  \lambda \, \int_{\R^d} f^2   \, \mu^{-1}(v) \dd v 
  \\
  \nonumber
  &&+  \eps \,  \int_{\R^d}  \left(  f^2 \, \left|\nabla_v \Phi\right|^2 +
    |\nabla_v f |^2 \right) \, \mu^{-1}(v) \dd v.
\end{eqnarray*}
\end{lem}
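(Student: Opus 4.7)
The plan is to reduce (\ref{eq:Poincare}) to the classical Poincaré inequality for the probability measure $\mu\dd v$ via the substitution $h := f/\mu$, and then bootstrap the strengthened form (\ref{eq:Poincare-fort}) from the ``slack'' $\lambda_P - \lambda$ by combining two integration-by-parts identities.

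\emph{Step 1 (classical Poincaré).} Setting $h := f/\mu$ one has $\int f^2 \mu^{-1}\dd v = \int h^2 \mu\dd v$, $\int f\dd v = \int h\,\mu\dd v$, and $\nabla_v h = \mu^{-1}(\nabla_v f + f\nabla_v\Phi)$. Thus (\ref{eq:Poincare}) is equivalent to the classical Poincaré inequality $\int_{\R^d}|\nabla_v h|^2\mu\dd v \ge \lambda_P \int_{\R^d} h^2 \mu\dd v$ for $h$ with $\int h\,\mu\dd v=0$. Under the growth $\Phi(v)\sim \langle v\rangle^\gamma/\gamma$ with $\gamma\ge 1$, this is standard: for $\gamma \ge 2$ it follows from the Bakry--Emery criterion since $\mathrm{Hess}(\Phi) \ge c\,I$ at infinity; for $1\le\gamma<2$ it follows from Persson's lemma applied to the Witten--Schrödinger operator $-\Delta + V$ with $V := \tfrac14|\nabla\Phi|^2 - \tfrac12\Delta\Phi$, whose potential tends to $+\infty$ at infinity (alternatively, a Muckenhoupt-type criterion after tensorization).

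\emph{Step 2 (key IBP identities and a Hardy-type bound).} Two integration-by-parts identities drive the argument. Expanding $\nabla h = \mu^{-1}(\nabla f + f\nabla\Phi)$ and integrating by parts gives
\[
\int|\nabla_v h|^2\mu\dd v = \int \left( |\nabla_v f|^2 - f^2\Delta_v\Phi \right)\mu^{-1}\dd v,
\]
while writing $|\nabla\Phi|^2\mu^{-1} = \nabla\Phi\cdot\nabla\mu^{-1}$ and integrating by parts yields
\[
\int f^2|\nabla_v\Phi|^2\mu^{-1}\dd v = -2\int f\,\nabla_v f\cdot\nabla_v\Phi\,\mu^{-1}\dd v - \int f^2\Delta_v\Phi\,\mu^{-1}\dd v.
\]
Applying Young's inequality to the cross term, together with the quantitative sub-dominance $|\Delta\Phi(v)|\le\eta|\nabla\Phi(v)|^2 + C_\eta$ for every $\eta>0$ (this is where $\gamma\ge 1$ enters, since our growth assumption yields $|\Delta\Phi|/|\nabla\Phi|^2 \to 0$ at infinity), yields the Hardy-type bound $\int f^2|\nabla_v\Phi|^2\mu^{-1}\dd v \le C_1 \int|\nabla_v f|^2\mu^{-1}\dd v + C_2 \int f^2\mu^{-1}\dd v$, and by symmetrical absorption in the first IBP identity, $\int|\nabla_v f|^2\mu^{-1}\dd v \le C_3 \int|\nabla_v h|^2\mu\dd v + C_4 \int f^2\mu^{-1}\dd v$.

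\emph{Step 3 (conclusion).} Combining the two bounds of Step 2 gives
\[
\int \left( f^2|\nabla_v\Phi|^2 + |\nabla_v f|^2 \right)\mu^{-1}\dd v \le K\int|\nabla_v h|^2\mu\dd v + K'\int f^2\mu^{-1}\dd v
\]
for constants $K,K'$ depending only on $\Phi$. Together with the classical Poincaré inequality from Step 1, the strengthened inequality (\ref{eq:Poincare-fort}) follows by choosing $\eps>0$ small enough so that $(1 - \eps K)\lambda_P - \eps K' \ge \lambda$, which is always possible when $\lambda < \lambda_P$, yielding the explicit choice $\eps(\lambda) = (\lambda_P-\lambda)/(K\lambda_P + K')$. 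The main obstacle is the Hardy-type bound in Step 2, whose validity crucially uses the sub-dominance of $\Delta\Phi$ relative to $|\nabla\Phi|^2$ at infinity (equivalently, the assumption $\gamma \ge 1$); the remainder of the argument is bookkeeping to transfer the Poincaré ``slack'' $\lambda_P-\lambda$ into the gradient and potential remainder on the right-hand side.
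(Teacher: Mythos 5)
Your proof is correct and follows essentially the same route as the paper: the paper likewise cites the classical Poincar\'e inequality for \eqref{eq:Poincare} and then derives the strengthened form by combining the identity $\int |\nabla_v(f/\mu)|^2\mu = \int(|\nabla_v f|^2 - f^2\Delta_v\Phi)\mu^{-1}$ with a second identity (there obtained via the ground-state transform $f\mapsto f\mu^{-1/2}$, which amounts to the same integration by parts you perform on $\int f^2|\nabla_v\Phi|^2\mu^{-1}$ followed by Young's inequality), exploiting in both cases the sub-dominance of $\Delta_v\Phi$ relative to $|\nabla_v\Phi|^2$ at infinity and absorbing the error into the Poincar\'e slack $\lambda_P-\lambda$. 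The only blemish is cosmetic and concerns a step the paper itself only cites: for $\gamma=1$ the Witten potential $\tfrac14|\nabla\Phi|^2-\tfrac12\Delta\Phi$ tends to $\tfrac14$ rather than $+\infty$, so the Persson argument should invoke the positivity of the bottom of the essential spectrum rather than its divergence.
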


\begin{proof}[Proof of Lemma~\ref{lem:PoincareInegalite}.]
  The proof of Lemma~\ref{lem:PoincareInegalite} is classical. We
  refer to \cite{BBCG} for a comprehensive proof of
  \eqref{eq:Poincare}. For the sake of completeness, we present a
  quantitative proof of \eqref{eq:Poincare-fort} as a consequence of
  \eqref{eq:Poincare} in the spirit of \cite{MRS}.

  On the one hand, by developing the LHS term, we find
$$
T  := \int_{\R^d} \left| \nabla_v \left( \frac{f}{\mu} \right)
\right|^2 \, \mu(v) \dd v  
=   \int_{\R^d} \left| \nabla_v f \right|^2 \, \mu^{-1} \dd v 
-  \int_{\R^d} f^{2}\, ( \Delta_v \Phi) \, \mu^{-1} \dd v .
$$

On the other hand, a similar computation leads to the following identity 
\begin{align*}
  T &= \int_{\R^d} \left| \nabla_v (f \mu^{-1/2}) \, \mu^{1/2} + (f
    \mu^{-1/2}) \, \nabla_v \mu^{1/2} \right|^2 \, \mu(v) \dd v
  \\
  &= \int_{\R^d} \left| \nabla_v (f \, \mu^{-1/2} ) \right|^2 \dd v +
  \int_{\R^d} f^{2}\, \left( \frac 14 |\nabla_v\Phi|^2 - \frac12 \Delta_v
    \Phi \right) \, \mu^{-1}\dd v.
\end{align*}

The two above identities together with \eqref{eq:Poincare} imply that
for any $\theta \in (0,1)$ 
\begin{align*} 
T &\ge (1-\theta) \lambda_P \,
\int_{\R^d} f^2 \, \mu^{-1} \dd v + \theta \int_{\R^d} f^{2}\, \left(
\frac1{16} |\nabla_v\Phi|^2 - \frac34 \Delta_v \Phi \right) \, \mu^{-1}
\dd v
\\
&+ \frac\theta{16} \int_{\R^d} f^{2}\, |\nabla_v\Phi|^2 \, \mu^{-1} \dd v+
\frac\theta2 \int_{\R^d} \left| \nabla_v f \right|^2 \, \mu^{-1} \dd v.
\end{align*}
Observe that $ |\nabla\Phi|^2 - 12 \Delta \Phi \ge 0$ for $v$ large
enough, and we can choose $\theta > 0$ small enough to conclude the
proof. \end{proof}

 We define 
\begin{equation}\label{eq:FPdefBhom}
\AA f := M \chi_R f, \qquad \BB f := \LL f - M \chi_R f
\end{equation}
where $M > 0$, $\chi_R(v) = \chi(v/R)$, $R > 1$, and
$0 \le \chi \in \DD(\R^d)$ is such that $\chi (v) = 1$ for any
$|v| \le 1$. The dissipativity estimates are proved as in the
spatially periodic case in
Lemmata~\ref{lem:FPhomo-BLp}-\ref{lem:FPhomo-BW1p}-\ref{lem:FPhomo-BW-1}-\ref{lem:FPhomo-BLipPrime}. Finally
the regularisation estimates are proved by using Nash's inequality:
\begin{lem}\label{lem:FP-RegBB}
  For any $1 \le p \le q \le \infty$ and for any $R,M$ as in the
  definition \eqref{eq:FPdefBhom} of $\BB$, there exists $b = b(R,M) >0$
  so that for any $\sigma \in \{-1,0, 1 \}$
\begin{equation}\label{eq:lemFP-RegBB1}
  \forall \, t \in [0,1], \quad \|\mathscr S_\BB(t) f \|_{W^{\sigma,q}(m)} \lesssim
  {e^{bt} \over t^{{d\over2} ({1 \over p} - {1 \over q}) }} \,  \|  f \|_{W^{\sigma,p}(m)}
\end{equation}
and for any $-1 \le \sigma < s \le 1$ 
\begin{equation}\label{eq:lemFP-RegBB2}
  \forall \, t \in [0,1], \quad \|\mathscr S_\BB(t) f \|_{H^s(m)} \lesssim
  {e^{bt} \over t^{s-\sigma}} \,  \|  f \|_{H^\sigma(m)}.
\end{equation}
\end{lem}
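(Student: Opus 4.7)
The plan is to derive these regularization estimates by combining the dissipativity of $\BB$, already established in Lemmas~\ref{lem:FPhomo-BLp}, \ref{lem:FPhomo-BW1p} and \ref{lem:FPhomo-BW-1}, with Nash's inequality in the spirit suggested by the remark preceding the lemma. The natural first target is the ultracontractive bound from $L^1(m)$ to $L^2(m)$: setting $f_t := \mathscr S_\BB(t) f_0$, one computes $\frac{d}{dt}\|f_t\|^2_{L^2(m)}$ and, using the coercivity already obtained, produces a differential inequality of the form
$$
\frac{d}{dt} \|f_t\|^2_{L^2(m)} + \kappa \, \|\nabla_v (f_t\, m)\|^2_{L^2} \le b \, \|f_t\|^2_{L^2(m)}.
$$
Writing Nash's inequality $\|g\|^{2+4/d}_{L^2} \le C\|g\|^{4/d}_{L^1} \|\nabla_v g\|^2_{L^2}$ for $g := f_t\, m$, and controlling $\|f_t\, m\|_{L^1} = \|f_t\|_{L^1(m)} \lesssim e^{bt}\|f_0\|_{L^1(m)}$ by the $L^1(m)$-dissipativity of $\BB$, one obtains a closed ODI for $u(t) := \|f_t\|^2_{L^2(m)}$ whose integration yields $\|f_t\|_{L^2(m)} \lesssim e^{bt}\, t^{-d/4}\|f_0\|_{L^1(m)}$.

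Next, to reach \eqref{eq:lemFP-RegBB1} for general $1 \le p \le q \le \infty$ in the case $\sigma = 0$, I combine this bound with its dual $L^2(m) \to L^\infty(m)$ estimate, obtained by duality in the pairing between $L^p(m)$ and $L^{p'}(m^{-1})$; the adjoint semigroup has the same kinetic Fokker--Planck structure up to a benign zero-order term and the Nash argument applies verbatim with the dual weight. The resulting $L^1(m) \to L^\infty(m)$ estimate with rate $t^{-d/2}$ interpolates via Riesz--Thorin with the dissipativity bound $L^p(m) \to L^p(m)$ (no time factor) to cover every pair $(p,q)$, and the semigroup identity $\mathscr S_\BB(t) = \mathscr S_\BB(t/2)\mathscr S_\BB(t/2)$ distributes the singularity to produce the sharp rate $t^{-(d/2)(1/p-1/q)}$.

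For $\sigma = 1$, I differentiate the equation: $g := \nabla_v f_t$ satisfies $\partial_t g = \BB g + [\nabla_v, \BB]\, f_t$, where the commutator is a first-order operator in $v$ with coefficients involving $\nabla_v^2 \Phi$ and $\nabla_v \chi_R$. Running the same weighted $L^2(m)$ energy argument together with Nash's inequality, and absorbing the commutator term using the coercivity of $\BB$ in $H^1(m)$ from Lemma~\ref{lem:FPhomo-BW1p}, yields \eqref{eq:lemFP-RegBB1} for $\sigma = 1$. The case $\sigma = -1$ follows by duality from the definition \eqref{def:W-1p}: the action of $\mathscr S_\BB(t)$ on $W^{-1,p}(m)$ is the transpose of the adjoint semigroup on $W^{1,p'}(m^{-1})$, and the previously established bound with $\sigma = 1$ and dual weight transfers directly.

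Finally, the Sobolev-to-Sobolev estimate \eqref{eq:lemFP-RegBB2} is handled in the same spirit: for the integer endpoints $(\sigma,s) \in \{(-1,0), (0,1), (-1,1)\}$, an energy identity for $\nabla_v f_t$ combined with $L^2(m)$ dissipation (and duality for $\sigma = -1$) gives the advertised rate $t^{-(s-\sigma)}$; complex interpolation on the scale $[H^{-1}(m), H^{1}(m)]_\theta = H^{2\theta-1}(m)$ fills in the fractional values. The main obstacle in the whole argument is handling the weight $m$ cleanly inside Nash's inequality, whose standard form requires an unweighted Lebesgue setting: the conjugation $g = fm$ is essential, but it introduces additional lower-order terms whose absorption into the dissipation term relies on the precise coercivity constants from Lemmas~\ref{lem:FPhomo-BLp}--\ref{lem:FPhomo-BW-1}, which is why the admissible class of weights \eqref{eq:def-wmpoly}--\eqref{eq:def-wmexpo} is already constrained at this regularization stage.
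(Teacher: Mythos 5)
Your proposal is correct and follows essentially the same route as the paper, whose entire ``proof'' of this lemma is the remark that the result is a classical variation on Nash's inequality combined with the Riesz--Thorin interpolation theorem (citing \cite[Lemma~3.9]{GMM}); your conjugation $g = f\,m$, the closed Nash differential inequality fed by the dissipativity estimates of Lemmata~\ref{lem:FPhomo-BLp}--\ref{lem:FPhomo-BW-1}, the duality and semigroup-splitting steps, the differentiated equation for $\sigma=1$, and the final interpolations are precisely the standard implementation of that sketch. The one convention to align with the paper is that its $W^{-1,p}(m)$ of \eqref{def:W-1p} is defined by duality against the \emph{unweighted} $W^{1,p'}$ tested on $\phi\,m$, so the dual regularisation estimate should be stated for the conjugated adjoint $\BB^*_m = m^{-1}\BB^*(m\,\cdot)$ on unweighted Sobolev spaces, exactly as in Step~3 of the proof of Lemma~\ref{lem:FPhomo-BW-1}.
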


\begin{proof}[Proof of Lemma~\ref{lem:FP-RegBB}.]  
  The proof is classical and is a variation around Nash's inequality,
  together with Riesz-Thorin interpolation Theorem.  We refer for
  instance to \cite[Lemma~3.9]{GMM} for some similar results.
\end{proof}
 
\subsection{Dissipativity property of  $\BB$} 
\label{subsec:FPhomo:dissipB}
  We define 
\begin{equation}\label{eq:FPdefB}
\AA f := M \chi_R f, \qquad \BB f := \LL f - M \chi_R f
\end{equation}
where $M > 0$, $\chi_R(v) = \chi(v/R)$, $R > 1$,  and $0 \le \chi \in \DD(\R^d)$ is such that $\chi (v) = 1$ for any $|v| \le 1$. 

\begin{lem}\label{lem:FPhomo-BLp} 
  For any exponents $\gamma \ge 1$, $p \in [1,\infty]$, for any weight
  function $m$ given by \eqref{eq:def-wmpoly} or \eqref{eq:def-wmexpo}
  and for any $a > a_0(m,p)$, we can choose $R,M$ large enough in the
  definition \eqref{eq:FPdefB} of $\BB$ such that the operator $\BB-a$
  is dissipative in $L^p(m)$.
\end{lem}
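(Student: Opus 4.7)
The plan is to perform a weighted $L^p$ energy estimate directly on the evolution equation $\partial_t f = \BB f$, with a careful completion-of-square that produces a sharp effective potential. For $p \in [1,\infty)$ and smooth $f$ (with the usual regularisation $(f^2+\varepsilon^2)^{(p-2)/2}f$ in place of $|f|^{p-2}f$ when $p<2$), multiply by $|f|^{p-2} f \, m^p$ and integrate over $\T^d\times\R^d$. The transport term contributes nothing, since
\[
-\int v\cdot\nabla_x f\,|f|^{p-2} f\,m^p\,dx\,dv = -\frac{1}{p}\int \nabla_x\cdot(v\,|f|^p)\,m^p\,dx\,dv = 0,
\]
by periodicity in $x$ and because $m$ depends only on $v$. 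For the Fokker-Planck part, integrating by parts in $v$ produces a negative quadratic form $-(p-1)\int|f|^{p-2}|\nabla_v f|^2 m^p$ plus zeroth-order terms involving $m$ and $F$.

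The decisive step is to regroup this quadratic form by completing the square with the log-derivative of the weight. Using the identity $|\nabla_v f|^2 = |\nabla_v f + f\nabla_v m/m|^2 - 2f\nabla_v f\cdot\nabla_v m/m - f^2|\nabla_v m|^2/m^2$ and one further IBP on the cross term, one arrives at
\[
\tfrac{1}{p}\tfrac{d}{dt}\|f\|_{L^p(m)}^p = -(p-1)\int|f|^{p-2}\Bigl|\nabla_v f + f\,\tfrac{\nabla_v m}{m}\Bigr|^2 m^p + \int|f|^p\bigl(\varphi_{p,m}-M\chi_R\bigr)m^p,
\]
with effective potential
\[
\varphi_{p,m}(v) := \Bigl(\tfrac{2}{p}-1\Bigr)\frac{\Delta_v m}{m} + \frac{2(p-1)}{p}\frac{|\nabla_v m|^2}{m^2} + \frac{p-1}{p}\nabla_v\cdot F - \frac{F\cdot\nabla_v m}{m}.
\]
This square-completion is equivalent to the change of variable $g=fm$, and is what yields the (remarkably) $p$-independent asymptotic constant in the borderline case $s=\gamma$.

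Next, verify case-by-case that $\limsup_{|v|\to\infty}\varphi_{p,m}(v) \le a_0(p,m)$. For the polynomial weight $m=\langle v\rangle^k$, all $m$-derivative terms vanish at infinity; when $\gamma=2$ one finds $\varphi_{p,m}\to (1-1/p)d-k = a_0(p,m)$, and when $\gamma>2$ the leading $\langle v\rangle^{\gamma-2}$ coefficient is $\frac{p-1}{p}(d+\gamma-2)-k<0$ by hypothesis on $k$, so $\varphi_{p,m}\to -\infty$. For the stretched-exponential weight $m=e^{\kappa\langle v\rangle^s}$, compare the three potentially leading exponents $\{2s-2,\,\gamma-2,\,s+\gamma-2\}$: when $s+\gamma>2$ and $s<\gamma$ the dominant term is $-\kappa s\langle v\rangle^{s+\gamma-2}\to-\infty$; at the borderline $s+\gamma=2$, $s<\gamma$, the constant $-\kappa s$ remains; at the borderline $s=\gamma$ the three exponents coincide and the leading coefficient simplifies (this is where the square-completion matters) to $\kappa\gamma(\kappa\gamma-1)<0$, which gives $-\infty$ for $\gamma>1$ and matches $a_0=\kappa^2-\kappa$ for $\gamma=1$.

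Having shown $\limsup_{|v|\to\infty}\varphi_{p,m}(v)\le a_0(p,m) < a$, pick $R$ large enough so that $\varphi_{p,m}(v)<a$ for $|v|>R$, and then pick $M>\sup_{|v|\le R}\varphi_{p,m}(v)-a$, finite since $\varphi_{p,m}$ is continuous and $\chi_R\equiv 1$ on $|v|\le R$. This gives $\varphi_{p,m}(v)-M\chi_R(v)\le a$ pointwise. Dropping the non-positive square term yields $\tfrac{d}{dt}\|f\|_{L^p(m)}^p \le pa\,\|f\|_{L^p(m)}^p$, i.e.\ $\BB-a$ is dissipative in $L^p(m)$. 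The case $p=\infty$ is handled either by passing to the limit $p\to\infty$ in the inequality (the coefficients of $\varphi_{p,m}$ pass to the limit and reproduce $a_0(\infty,m)$), or by a direct parabolic comparison principle for $g=fm\,e^{-at}$. The main technical obstacle is the case analysis of $\varphi_{p,m}$ at infinity: choosing specifically the log-derivative $\nabla_v m/m$ in the square-completion is what produces, in every regime (especially $s=\gamma$), the correct $p$-independent asymptotic $a_0(p,m)$ as stated in the definition of the abscissa.
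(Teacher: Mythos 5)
Your proof is correct and follows essentially the same route as the paper: the weighted $L^p$ energy estimate with the square-completion in $\nabla_v m/m$ is exactly the paper's change of unknown $h=fm$, and your effective potential $\varphi_{p,m}$ coincides with the paper's $\psi^0_{m,p}$ (up to the $-M\chi_R$ term you keep separate). The case-by-case asymptotic analysis, the choice of $R$ then $M$, and the passage to the limit $p\to\infty$ for the endpoint are all the same as in the paper's argument.
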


\begin{proof}[Proof of Lemma~\ref{lem:FPhomo-BLp}.]
  We start by establishing an identity satisfied by the operator
  $\LL$.  For any smooth, rapidly decaying and positive function $f$,
  we make the splitting
  \begin{align*} 
    \int_{\T^d \times \R^d} (\LL \, f) \, f^{p-1} \, m^p \dd x \dd v
    = \int_{\T^d \times \R^d} f^{p-1} \, m^p \, (\Delta_v f + \hbox{\rm div}_v (F \,
    f)) \dd x \dd v =: T_1 + T_2.  
  \end{align*} 
  For the second term $T_2$, we use integration by part in $v$:
  \begin{eqnarray*} 
    T_2 &=& \int_{\T^d \times \R^d} f^{p-1} \, m^p \, \hbox{\rm div}_v (F \, f)
    \dd x \dd v \\
    &=& \int_{\T^d \times \R^d} f^{p-1} \, m^p \, ( \hbox{div}_v F \, f + F \cdot
    \nabla_v f) \dd x \dd v
    \\
    &=& \int_{\T^d \times \R^d} f^{p} \, ( \hbox{div}_v F) \, m^p \dd v - {1 \over
      p} \int_{\T^d \times \R^d} f^p \, \hbox{\rm div}_v (F \, m^p) \dd x \dd v
    \\
    &=& \int_{\T^d \times \R^d} f^{p} \, \left[ \left(1-{1 \over
          p}\right)  \hbox{div}_v F - F \cdot {\nabla_v m \over m}\right]\, m^p \dd x \dd v.  
\end{eqnarray*} 

For the first term $T_1$, we use integrations by part in $v$ and the
identity $m \nabla m^{-1}+ m^{-1} \nabla m = 0$ in order to get with
the notation $h= fm$
  \begin{eqnarray*} 
    T_1 
    &=& 
    \int_{\T^d \times \R^d} h^{p-1} \, m \, \Delta_v \left( h m^{-1}\right) \dd x \dd v \\ 
    &=& 
     - \int_{\T^d \times \R^d} \nabla_v \left( h^{p-1} \right) \cdot
    \left( \nabla_v h   + h m \nabla_v m^{-1}\right) \dd x \dd v 
    \\ && 
     - \int_{\T^d \times \R^d} h^{p-1} \nabla_v m \cdot  \left( \nabla_v h \, m^{-1} 
     + h \nabla_v m^{-1}\right)   \dd x \dd v   
      \\ &=& 
    -   \int_{\T^d \times \R^d}  \nabla_v h^{p-1} \cdot \nabla_v h  \dd x \dd v  
  + \left(1 - \frac2p \right) \int_{\T^d \times \R^d} \left( \nabla_v h^p  \cdot
      \nabla_v m \right)   m ^{-1} \dd x \dd v  
      \\&&
            -  \int_{\T^d \times \R^d}  h^p \left( \nabla_v m  \cdot
      \nabla_v m^{-1} \right)   \dd x \dd v \\
    &=&  - (p-1) \int_{\T^d \times \R^d} |\nabla_v h|^2  \, h^{p-2} \dd x \dd v 
    \\&&
    + \int_{\T^d \times \R^d}h^p \left[  
      \left(\frac2p -1  \right) \nabla_v  \left( \frac{\nabla_v m}m \right) +   \frac{|\nabla_v m|^2}{m^2} \right] \dd x \dd v 
    \\
    &=& 
    - (p-1) \int_{\T^d \times \R^d} |\nabla_v (fm)|^2  \, (fm)^{p-2} \dd x \dd v 
    \\
    &&  + \int_{\T^d \times \R^d}(fm)^p \left[ 
      \left(\frac2p -1 \right)  \frac{\Delta_v m}m   +  2
      \left(1-\frac1p \right)    \frac{|\nabla_v m|^2}{m^2} \right] \dd
    x \dd v. 
     \end{eqnarray*} 
  All together, we then have established
\begin{multline}\label{eq:FPhomo-BLp}
 \int_{\T^d \times \R^d} (\BB \, f) \, f^{p-1}  \,  m^p  \dd x \dd v 
 \\ = 
 - (p-1) \int_{\T^d \times \R^d}  |\nabla_v (mf)|^2 \, (mf)^{p-2}\dd x \dd v  +
 \int_{\T^d \times \R^d}  f^p \, m^p \, \psi^0_{m,p} \dd x \dd v,
\end{multline} 
with
$$
\psi^0_{m,p} := \left( \frac2p -1 \right) \frac{\Delta_v m}m +
2 \left( 1- \frac1p \right) \frac{|\nabla_v m|^2}{m^2} + \left(1-{1 \over
    p}\right) \, \hbox{div}_v F - F \cdot {\nabla_v m \over m} - M \,
\chi_R.
$$
 Introducing the notation $s := 0$, $\kappa := 1$ when $m = \langle v \rangle^k$
and $k := s$ when $m := e^{\kappa \, \langle v \rangle^s}$, we have    
\begin{eqnarray*} 
  \psi^0_{m,p}  && = 
  \quad \left( \frac2p -1 \right) \left(  \kappa k d \langle v \rangle^{s-2} + \kappa k (k-2) |v|^2 \langle v \rangle^{s-4} +
  \kappa^2 s^2 |v|^2 \langle v \rangle^{2s-4} \right)
  \\
  && \quad  + \left( 2- \frac2p \right)  \kappa^2 k^2 \, |v|^2 \, \langle v \rangle^{2s-4}
 + \left(1-{1 \over p}\right) \left( d \langle v \rangle^{\gamma-2} + (\gamma-2) \,|v|^2
  \,\langle v \rangle^{\gamma-4}\right)
  \\
  && \quad - \kappa k \, |v|^2 \, \langle v \rangle^{\gamma+s-4} - M
  \, \chi_R
\end{eqnarray*}
which gives the asymptotic behaviors
\begin{align*}
  \psi^0 _{m,p} (v) & \mathop{\sim}_{|v|\to \infty}  \kappa^2 s^2 |v|^{2s-2}
  - \kappa s \, |v|^{\gamma+s-2} & \hbox{if } s > 0 \\[0.2cm]
  \psi^0 _{m,p} (v)& \mathop{\sim}_{|v|\to \infty}  \left[ \left(1-{1 \over p}\right) (d +
    \gamma-2) - k \, \right] |v|^{\gamma-2} & \hbox{if } s = 0.
\end{align*} 

\smallskip As a consequence, when $m = e^{\kappa \, \langle v \rangle^s}$, $\gamma \ge s
> 0$, $\gamma+s \ge 2$, $\kappa > 0$ (with $\kappa < 1/\gamma$ if
$s=\gamma$) we obtain
 \begin{align*} 
   \psi^0_{m,p}&\xrightarrow[v \to \infty]{} \kappa^2 - \kappa &
\hbox{if } \gamma = s= 1, \\[0.2cm] 
\psi^0_{m,p}&\xrightarrow[v \to \infty]{} - \kappa s&
\hbox{if } \gamma + s= 2, \ s < \gamma, \\[0.2cm] 
\psi^0_{m,p}&\xrightarrow[v \to \infty]{} -\infty & \hbox{in the other cases.}  
\end{align*}

\smallskip When $\gamma \ge 2$ and $m = \langle v \rangle^k$, we get
\begin{align*}
\psi^0_{m,p}&\xrightarrow[v \to \infty]{} \left(1 - {1 \over p}\right) \, d - k & \hbox{if }
\gamma = 2,
\\
\psi^0_{m,p}&\xrightarrow[v \to \infty]{} -\infty & \hbox{if } \gamma > 2 \ \mbox{ and
  } \ k > \left(1-{1 \over p}\right) (d + \gamma-2).  
\end{align*} 
Observe that in all cases when $\gamma +s > 2$, we have
\begin{equation}\label{eq:psi0casgamma>2}
\psi^0_{m,p}  \,\, \mathop{\sim}_{|v|\to \infty} \,\, - \theta \,
\langle v \rangle ^{\gamma+s-2}, \quad \mbox{for some constant } \theta > 0.
\end{equation}

\Black We have then proved the following estimate: for any $a >
a_{p,m}$, $\theta' \in (0,a-a_0(p,m))$ small enough and $p \in
[1,\infty)$, we then can choose $R,M$ large enough in such a way that
$\psi^0_{m,p}(v) \le a-\theta'$ for any $v \in \R^d$, and
\begin{multline}\label{eq:BdissipLp}
\int_{\T^d \times \R^d} (\BB f) \, f^{p-1} \, m^p \dd x \dd v \le a\int_{\T^d \times \R^d} |f|^p \,
m^p \dd x \dd v \\ - \theta' \int_{\T^d \times \R^d} |f|^p \, m^p \, \langle v
\rangle^{\gamma+s-2}\dd x \dd v -
(1-p) \int_{\T^d \times \R^d} |\nabla_v(fm)|^2 \, (fm)^{p-2} \dd x \dd v.
\end{multline}
As a consequence and in particular, throwing out the two last terms, we have 
$$
\forall \, f \in L^p(m), \quad \|\mathscr S_{\BB}(t) f \|_{L^p(m)} \le e^{at} \,
\|f \|_{L^p(m)}.
$$
Since $p \mapsto a_0(p,m)$ is increasing, we may pass to the limit as
$p\to\infty$ in the above inequality and we thus conclude that $\BB-a$
is dissipative in $L^p(m)$ for any $p \in [1,\infty]$ and any $a >
a_0(p,m)$. \end{proof}

\begin{lem}\label{lem:FPhomo-BW1p} 
  For any exponents $\gamma \ge 1$, $p \in [1,\infty]$, for any weight
  function $m$ given by \eqref{eq:def-wmpoly} or \eqref{eq:def-wmexpo}
  and for any $a > a_1(m,p)$, we can choose $R,M$ large enough in the
  definition \eqref{eq:FPdefB} of $\BB$ such that the operator $\BB-a$
  is hypodissipative in $W^{1,p}(m)$.
\end{lem}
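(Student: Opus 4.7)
The plan is to prove hypodissipativity of $\BB - a$ in $W^{1,p}(m)$ by exhibiting an equivalent norm on $W^{1,p}(m)$ that captures the hypoelliptic cooperation between spatial and velocity derivatives. The standard $W^{1,p}(m)$ norm alone will not suffice because differentiating $\partial_t f = \BB f$ in $v_j$ produces, via the commutator $[\partial_{v_j}, v\cdot\nabla_x] = \partial_{x_j}$, a term $-\partial_{x_j} f$ of indefinite sign in the evolution of $\partial_{v_j} f$; this classical hypoelliptic commutator obstruction accounts for the (small) gap between the abscissas $a_1(p,m)$ and $a_0(p,m)$.

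Since $F$, $m$ and $\chi_R$ depend only on $v$, one has the exact identities
\begin{equation*}
\partial_t(\nabla_x f) = \BB(\nabla_x f), \qquad \partial_t(\nabla_v f) = \BB(\nabla_v f) - \nabla_x f + G(f),
\end{equation*}
where $G(f) := \nabla_v\cdot(D_v F\,f) - M(\nabla_v\chi_R)\,f$ is a bounded $v$-localized perturbation controllable by the strong dissipation terms $\int f^p\langle v\rangle^{\gamma+s-2} m^p$ and $(p-1)\int|\nabla_v(fm)|^2(fm)^{p-2}$ already produced at the $L^p(m)$ level in~\eqref{eq:FPhomo-BLp}. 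On $W^{1,p}(m)$ I would then introduce the equivalent twisted norm
\begin{equation*}
\|f\|_*^p := \|f\|_{L^p(m)}^p + K\,\|\nabla_x f\|_{L^p(m)}^p + \|\nabla_v f + \eta\nabla_x f\|_{L^p(m)}^p,
\end{equation*}
with $\eta>0$ small and $K>0$ large, the equivalence with the standard $W^{1,p}(m)$ norm being immediate from the triangle inequality.

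Setting $h := \nabla_v f + \eta\,\nabla_x f$, which satisfies $\partial_t h = \BB h - \nabla_x f + G(f)$, the pointwise identity~\eqref{eq:FPhomo-BLp} applied to each of $f$, $\nabla_x f$ and $h$ produces as the only genuinely new term
\begin{align*}
&-p\int|h|^{p-2}(h\cdot\nabla_x f)\,m^p\dd x\dd v \\
&\qquad = -p\eta\int|h|^{p-2}|\nabla_x f|^2\,m^p\dd x\dd v - p\int|h|^{p-2}(\nabla_v f\cdot\nabla_x f)\,m^p\dd x\dd v,
\end{align*}
whose first piece is coercive in $\nabla_x f$ and whose second piece is split by Young's inequality between that coercive piece and the good gradient term $(p-1)\int|\nabla_v(hm)|^2(hm)^{p-2}$ coming from~\eqref{eq:FPhomo-BLp}. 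Choosing first $\eta$ small enough for the Young absorption to succeed, then $K$ large enough that the decay of the $K\,\|\nabla_x f\|^p_{L^p(m)}$ piece (at the better rate $a_0\le a_1$) buys back the $-\nabla_x f$ contribution entering $h$, and finally $R,M$ large so that $\psi^0_{m,p}\le a - \theta'$ pointwise, yields $\dt\|f\|_*^p \le p\,a\,\|f\|_*^p$ for any $a>a_1(m,p)$. The main obstacle is the careful $L^p$-style bookkeeping of these competing constants, and handling the limiting cases $p=1$ and $p=\infty$ by duality and approximation.
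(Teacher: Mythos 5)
Your overall strategy (a twisted $W^{1,p}(m)$-type norm exploiting the decay of $\nabla_x f$, which satisfies the same equation) is the right one and is in the same spirit as the paper, but there is a genuine gap in the step where you propose to close the commutator estimate.

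You want to control $-p\int|h|^{p-2}(\nabla_v f\cdot\nabla_x f)\,m^p$ by Young's inequality, absorbing the $\nabla_x f$-part into the coercive piece $-p\eta\int|h|^{p-2}|\nabla_x f|^2 m^p$ and the $\nabla_v f$-part into the gradient term $(p-1)\int|\nabla_v(hm)|^2\,(hm)^{p-2}$ produced by \eqref{eq:FPhomo-BLp}. The second absorption does not work: the good gradient term controls $\nabla_v(hm)$, i.e.\ \emph{second} $v$-derivatives of $f$ (since $h$ is itself a first derivative), whereas the term you need to absorb involves the \emph{first} derivative $\nabla_v f$. These are not comparable, so that Young split has nothing to land on. The alternative route — bounding $|\nabla_v f|\lesssim |h|+\eta|\nabla_x f|$ and pushing everything back onto $|h|^p$ and $|h|^{p-2}|\nabla_x f|^2$ — produces a term of order $\delta^{-1}\int|h|^p m^p$ with $\delta\lesssim\eta$ from the optimization, i.e.\ a contribution $\approx\eta^{-1}$ to the decay rate of $\|h\|_{L^p(m)}^p$. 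Since your twisted norm puts coefficient $1$ (not a small coefficient) on $\|h\|_{L^p(m)}^p$, this blow-up is not damped; it makes the scheme fail precisely in the regime $\eta\to 0$ you need.

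The mechanism actually used in the paper avoids this by placing a small coefficient $\zeta$ in front of $\sum_i\|\partial_{v_i}f\|_{L^p(m)}^p$ and splitting the commutator in the plain $L^p$-Young form $|T_3|\le\var^{-p}\,p^{-1}\int|\partial_{x_i}f|^p m^p+\var^{p'}\,(p')^{-1}\int|\partial_{v_i}f|^p m^p$; the large constant $\var^{-p}$ is then multiplied by $\zeta$ when summed into the twisted norm, and one chooses $\var$ small first, then $\zeta$ small. Your norm $\|f\|^p+K\|\nabla_x f\|^p+\|h\|^p$ is, after dividing by $K$, of exactly this form with $\zeta=1/K$; it can be made to work, but only by replacing your proposed Young split by the paper's plain $L^p$-Young split and then sending $K\to\infty$. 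Once this repair is made, the additional $\eta$-twist in $h=\nabla_v f+\eta\nabla_x f$ buys nothing: the coercive piece $-p\eta\int|h|^{p-2}|\nabla_x f|^2 m^p$ is never needed, and the proof collapses to the paper's. So the twist is superfluous, and the Young-split step as written is incorrect.
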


\begin{proof}[Proof of Lemma~\ref{lem:FPhomo-BW1p}.] 
  The decay of $\nabla_x \mathscr S_{\BB}(t) f = \mathscr S_{\BB}(t)
  \nabla_x f$ is proved as in Lemma~\ref{lem:FPhomo-BLp} since
  $x$-derivatives commute with the equation. We hence have
\begin{align*}
  \int_{\T^d \times \R^d} (\BB f) \, f^{p-1} \, m^p \dd x \dd v & \le a\int_{\T^d \times \R^d} |f|^p \,
m^p \dd x \dd v, \\ 
\int_{\T^d \times \R^d} \partial_{x_i} (\BB f) \, \partial_{x_i} f
|\partial_{x_i} f|^{p-2} \, m^p \dd x \dd v & \le a\int_{\T^d \times
  \R^d} |\partial_{x_i} f|^p \,
m^p \dd x \dd v.
\end{align*}

  For any $i \in \{ 1, \dots, d \}$, we compute
\begin{multline*}
  \int_{\T^d \times \R^d} (\partial_{v_i} \LL \, f) \, \partial_{v_i}
  f |\partial_{v_i} f|^{p-2} \, m^p \dd x \dd v \\ = \int_{\T^d \times
    \R^d} \partial_{v_i} f |\partial_{v_i} f|^{p-2} \, m^p \,
  \left(\Delta_v \partial_{v_i} f
    + \sum_{j=1} ^d \partial_{v_i} \partial_{v_j} \left( F_j \,  f\right)\right) \dd x \dd v \\
  - \int_{\T^d \times \R^d} \partial_{x_i} f \partial_{v_i} f
  |\partial_{v_i} f|^{p-1} m^p \dd x \dd v =: T_1 + T_2 + T_3.
\end{multline*}
For the first term $T_1$, proceeding exactly as in the proof of
Lemma~\ref{lem:FPhomo-BLp}, we find
\begin{multline*}
  T_1 = - (p-1) \int_{\T^d \times \R^d} |\nabla_v (m \partial_{v_i}
  f)|^2 \, \left|m \partial_{v_i} f\right|^{p-2} \dd x \dd v
  \\
  + \int_{\T^d \times \R^d} |\partial_{v_i} f|^p \, m^p \, \left\{
    \left(\frac2p -1 \right) \frac{\Delta_v m}m + 2 \left(1-\frac1p
    \right) \frac{|\nabla_v m|^2}{m^2} \right\} \dd x \dd v.
\end{multline*}
For the second term $T_2$,  we have
\begin{multline*}
  T_2 =   \int_{\T^d \times \R^d} \sum_{j=1} ^d \left( \partial_{v_i} \partial_{v_j} F_j \, f
    +  \partial_{v_i} F_j \, \partial_{v_j} f  \right)  \, \partial_{v_i} f
  |\partial_{v_i} f |^{p-2} m^p \dd x \dd v \\ +
  \int_{\T^d \times \R^d} |\partial_{v_i} f|^{p} \, \left[ ( \hbox{div}_v F) \,  \left(1-{1
        \over p}\right) - F \cdot {\nabla_v m \over m}\right]\, m^p \dd x \dd v.
\end{multline*}
For the third term $T_3$, we use Young inequality to split it as 
\begin{equation*}
  T_3 \le \var^{-1} \int_{\T^d \times \R^d} |\partial_{x_i} f|^p \,
  m^p \dd x \dd v + \var \int_{\T^d \times \R^d} |\partial_{v_i} f|^p
  \, m^p \dd x \dd v
\end{equation*}
where $\var$ will later be chosen small. 

Using the Young inequality, we get
\begin{eqnarray*}
  &&\sum_i  \int_{\T^d \times \R^d} (\partial_{v_i} \BB \, f) \, \partial_{v_i} f
  |\partial_{v_i} f|^{p-2}  \, m^p  \dd x \dd v \\
  && = \sum_i  \left\{T_1 + T_2 +T_3 -  \int_{\T^d \times \R^d} \partial_{v_i} (M \, \chi_R \,
    f) \, \partial_{v_i} f |\partial_{v_i} f|^{p-2}  \,  m^p \dd x \dd v \right\} 
  \\
  &&\le \int_{\T^d \times \R^d}     {|f|^p \over p'}   \,  Z \, m^p 
  +  \int_{\T^d \times \R^d} \psi_{m,p}^1 \, \left( \sum_{i=1} ^d |\partial_{v_i} f|^p
  \right) \, m^p \dd x \dd v \\
  && \quad \quad  + \var^{-1} \int_{\T^d \times \R^d}
    \left( \sum_{i=1} ^d
    |\partial_{x_i} f|^p \right) \, m^p \dd x \dd v,
 \end{eqnarray*}
 with 
\[
Z := \sum_{i,j=1} ^d |\partial_{v_i} \partial_{v_j} F_j| + (M/R) \, |(\hbox{div}_v \chi)_R|
\]
and
 $$
 \psi_{m,p}^1 := {1 \over p}\, Z + \frac1{p'}\sup_i \sum_{j}
 |\partial_{v_i} F_j|+ \frac1p \sup_j \sum_i |\partial_{v_i} F_j| + 
 \psi^0_{m,p} + \var.
$$

On the one hand, the function $Z$ is always negligible with respect to
the dominant term in $\psi^0_{m,p}$ (which is $F \cdot \nabla_v \ln
m$). On the other hand, we compute
\begin{align*}
\sup_i \sum_{j} |\partial_{v_i} F_j| & \le \left(1 + \sqrt{d} \, (\gamma-2) \right) \,
\langle v \rangle^{\gamma-2}, \\ 
\sup_j \sum_{i} |\partial_{v_i} F_j| & \le \left(1 + \sqrt{d} \, (\gamma-2) \right) \,
\langle v \rangle^{\gamma-2}.
\end{align*}
We deduce 
$$
\limsup \psi_{m,p}^1 \le \limsup \tilde \psi_{m,p}^1
$$
with 
$$
\tilde \psi_{m,p}^1 :=   \left(1 + \sqrt{d} \,  (\gamma-2)  \right) \,
\langle v \rangle^{\gamma-2} + \psi_{m,p}^0 + \eps.
$$

When $m = e^{\kappa \langle v \rangle^s}$, $\gamma \ge s > 0$,
$\gamma+s \ge 2$, $\gamma \ge 1$, $\kappa > 0$, we observe that
$\tilde \psi_{m,p}^1 \sim_{v \to \infty} \psi^0_{m,p}$, and when $m =
\langle v \rangle^k$, $\gamma \ge 2$, we observe that
\begin{align*} 
  \limsup_{v \to \infty} \tilde \psi_{m,p}^1 &\le 1 + \left(1- {1 \over
      p}\right) \, d - k + \var  \quad \hbox{if } \gamma = 2, 
  \\
  \limsup_{v \to \infty} \tilde \psi_{m,p}^1 & = -\infty \quad \hbox{if } \gamma > 2 \ \mbox{ and
  } \ k > 1 + \sqrt{d} (\gamma-2) + \left(1-{1 \over p}\right) (d + \gamma-2).
\end{align*}

Summing up, for any $a > a_1(p,m)$, $\eta \in (0,a-a_1(p,m))$ and $p
\in [1,\infty)$, we can choose $R,M$ large enough and $\var$ small
enough in such a way that $\psi^1_{m,p}(v) \le a-\eta$ for any $v \in
\R^d$. We then have established the following estimate
\begin{multline*}\label{eq:BdissipW1p}
 \sum_{i=1} ^d \int_{\T^d \times \R^d} (\partial_{v_i} \BB f)
  \, \partial_{v_i} f |\partial_{v_i} f|^{p-2} \, m^p \dd x \dd v \le 
  \\
\le  C 
  \int_{\T^d \times \R^d} |f|^p \, m^p \, \langle v \rangle^{\gamma-2}
  \dd x \dd v + C \int_{\T^d \times \R^d} \left( \sum_{i=1} ^d
    |\partial_{x_i} f|^p
  \right) \, m^p \dd x \dd v \\
  + a\int_{\T^d \times \R^d} \left( \sum_{i=1} ^d |\partial_{v_i} f|^p
  \right) \, m^p \dd x \dd v \\ - \frac{\theta}{2}\int_{\T^d \times \R^d} \left(
    \sum_{i=1} ^d |\partial_{v_i} f|^p \right) \, m^p \, \langle v
  \rangle^{\gamma+s-2} \dd x \dd v \\ - (p-1)\, \sum_{i,j=1} ^d
  \int_{\T^d \times \R^d} |\partial_{v_i} ((\partial_j
  f)m)|^2 \partial_{v_i} f |\partial_{v_i} f|^{p-2} m^p \dd x \dd v
\end{multline*}
where $C$ depends on $M$ and $R$. 

As a consequence, any solution $f$ to the linear evolution equation
 $$
 \partial_t f = \BB \, f, \quad f(0) = f_0 \in W^{1,p}(m)
 $$
 satisfies 
 \begin{multline*}
   {{\rm d} \over {\rm d}t}\int_{\T^d \times \R^d} \left( \sum_{i=1}
     ^d
     |\partial_{v_i} f |^p \right)\, {m^p\over p} \dd x \dd v  \le C
   \int_{\T^d \times \R^d} |f|^p \, m^p \, \langle v
   \rangle^{\gamma-2} \dd x \dd v \\ + C \int_{\T^d \times \R^d}
   \left( \sum_{i=1} ^d
    |\partial_{x_i} f|^p
  \right) \, m^p \dd x \dd v + a \int_{\T^d \times \R^d}
   \left( \sum_{i=1} ^d |\partial_{v_i} f |^p \right) \, m^p \dd x \dd v.
 \end{multline*}
 Defining the equivalent norm $\| \cdot  \|_{\tilde W^{1,p}(m)}$ thanks to 
\[
\|f \|_{\tilde W^{1,p}(m)}^p:= \| f \|_{L^p(m)}^p + \sum_{i=1} ^d \|\partial_{x_i} f
\|_{L^p(m)}^p + \zeta \sum_{i=1} ^d \|\partial_{v_i} f
\|_{L^p(m)}^p 
\]
and choosing $\zeta> 0$ small enough, we conclude thanks to
Lemma~\ref{lem:FPhomo-BLp} and the estimate \eqref{eq:psi0casgamma>2}
that $(\BB-a)$ is dissipative in $\tilde W^{1,p}(m)$ for any $a >
a_1(p,m)$ and $p \in (1,\infty)$, and therefore in $W^{1,p}(m)$ for
any $a > a_1(p,m)$ and $p \in [1,\infty]$. \end{proof}


\begin{lem}\label{lem:FPhomo-BW-1} 
  For any $p \in [1,\infty]$, for any force $F$ given by
  \eqref{def:forceF}, any weight function $m$ given by
  \eqref{eq:def-wmpoly} or \eqref{eq:def-wmexpo}, and for any $a >
  a_{-1}(m,p)$, we can choose $R,M$ large enough in the definition
  \eqref{eq:FPdefB} of $\BB$ such that the operator $\BB-a$ is
  hypodissipative in $W^{-1,p}(m)$.
\end{lem}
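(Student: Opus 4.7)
The plan is to reduce the problem by duality to a dissipativity estimate in the positive-order Sobolev space $W^{1,p'}$, to which the computations of Lemma~\ref{lem:FPhomo-BW1p} apply essentially verbatim. First, the isometry $f \mapsto \hat f := mf$ identifies $W^{-1,p}(m)$ with the unweighted space $W^{-1,p}$ and conjugates $\BB$ to $\hat\BB\hat f := m\,\BB(m^{-1}\hat f)$, so hypodissipativity of $\BB - a$ in $W^{-1,p}(m)$ amounts to hypodissipativity of $\hat\BB - a$ in $W^{-1,p}$. Using the duality $W^{-1,p} = (W^{1,p'})^*$, this is in turn equivalent to hypodissipativity of the formal adjoint $\hat\BB^* - a$ on the unweighted space $W^{1,p'}$.

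A direct integration by parts yields
\[
\hat\BB^* \phi = \Delta_v \phi + \Bigl( 2\tfrac{\nabla_v m}{m} - F \Bigr) \cdot \nabla_v \phi + v \cdot \nabla_x \phi + \Bigl( \tfrac{\Delta_v m}{m} - F \cdot \tfrac{\nabla_v m}{m} - M\chi_R \Bigr) \phi,
\]
which has the same kinetic transport $v\cdot\nabla_x$ and velocity diffusion $\Delta_v$ as $\BB$, with a modified drift and a new zero-order term. Pairing $\hat\BB^*\phi$ with $|\phi|^{p'-2}\phi$, integrating by parts in $v$ (the $v\cdot\nabla_x$ part vanishes on the torus since $\hbox{div}_x v = 0$) and simplifying, a direct computation shows that the effective scalar coefficient that emerges is precisely $\psi^0_{m,p}$ (the same function, indexed by $p$ and not $p'$, as in Lemma~\ref{lem:FPhomo-BLp}): the new drift term $2\nabla_v m/m$ and the new zero-order term $\Delta_v m/m - F\cdot\nabla_v m/m$ combine exactly to reproduce the ``weighted'' contributions already computed in Lemma~\ref{lem:FPhomo-BLp}.

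One then adds the gradient estimates as in Lemma~\ref{lem:FPhomo-BW1p}: $x$-derivatives commute with $\hat\BB^*$ and yield the same $L^{p'}$ bound, while $v$-derivatives produce the commutators $[\hat\BB^*,\partial_{v_i}]$, whose only non-vanishing hypoelliptic contribution is the term $\partial_{x_i}\phi$ coming from $[v\cdot\nabla_x,\partial_{v_i}]$. This is controlled by exactly the same Young inequality trick used for the term $T_3$ in the proof of Lemma~\ref{lem:FPhomo-BW1p}, and leads to the analogue of $\psi^1_{m,p}$ and to the threshold $a_{-1}(p,m)$ stated in the lemma. The endpoint $p = \infty$ (i.e.\ $p' = 1$) is recovered by passing to the limit in an equivalent norm on $W^{1,p'}$ exactly as in Lemma~\ref{lem:FPhomo-BW1p}.

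The main technical obstacle is twofold. First, justifying the adjoint/duality reduction at the level of unbounded generators rather than purely formally (in particular for the endpoint exponents $p = 1, \infty$); if needed this can be done at the resolvent level using the factorization formula of Theorem~\ref{theo:factor:equiv}. Second, verifying that the lower-order terms produced by the modified drift $2\nabla_v m/m - F$ and by the $v$-derivative commutators are genuinely negligible at infinity compared to the dominant asymptotics of $\psi^0_{m,p}$ and $\psi^1_{m,p}$: since $|\nabla_v m|/m = O(\langle v\rangle^{s-1})$ while $|F|\sim\langle v\rangle^{\gamma-1}$, this modification is of lower order in all the relevant regimes (the borderline $s = \gamma$ being accommodated by the smallness condition $\kappa < 1/\gamma$ built into the weight).
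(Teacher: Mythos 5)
Your proposal follows essentially the same route as the paper's proof: conjugate by the weight $m$, pass to the formal adjoint $\BB^*_m$, prove $L^{p'}$ and then $W^{1,p'}$ dissipativity of the adjoint (the effective coefficient you identify is correct: the paper's $\psi^2_{p',m}$ equals $\psi^0_{m,p}$), absorb the commutator term $\partial_{x_i}\phi$ via Young's inequality in an equivalent norm with a small parameter in front of the $v$-derivatives, and conclude by duality with the $p'\to 1$ limit for the endpoint. This matches the paper's argument, including the treatment of the remaining lower-order commutator terms via the extra $\langle v\rangle^{\gamma+s-2}$ dissipation from the zeroth-order estimate.
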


\begin{proof}[Proof of Lemma~\ref{lem:FPhomo-BW-1}.] We split the
  proof into three steps.

\smallskip\noindent{\sl Step 1. }
We first observe that if 
$$
\CC f := A \, f + B \cdot \nabla_v f + \Delta_v \, f - v \cdot
\nabla_x f,
$$
and we make the change of unknown $h := f m$ with $m=m(v)$, then the
corresponding operator $\CC_m h = m \, \CC(m^{-1} h)$ writes
\begin{equation*}
  \CC_m h := A_m \, h + B_m \cdot \nabla_v h + \Delta_v \, h - v \cdot
  \nabla_x h
\end{equation*}
with 
\begin{equation*}
A_m := \left[- {\Delta_v m \over m} + 2 \, {|\nabla_v m|^2 \over m^2} 
    + A - B \cdot {\nabla_v m \over m} \right], \qquad 
   B_m := \left[B - 2 \, {\nabla_v m \over m} \right].
\end{equation*}

We also observe that the dual operator $\CC^*$ writes
$$
\CC^* \phi := A^* \, \phi + B^* \cdot \nabla_v \phi + \Delta_v \, \phi + v\cdot
\nabla_x \phi
$$ 
with
$$
\quad A^* :=  (A -  \hbox{div}_v \, B), \ B^*:= - B. 
$$

Defining 
$$
\BB f := (\hbox{div}_v F - M \chi_R) \, f + F \cdot \nabla_v f +
\Delta_v f - v \cdot \nabla_x f
$$
and using the two above identities, we get 
\begin{equation}\label{eq:defB*m}
  \BB^*_m \phi = 
  \left[ {\Delta_v m \over m}   - M \chi_R - F \cdot {\nabla_v m \over m}
  \right] \, \phi 
  - \left[F - 2 \, {\nabla_v m \over m} \right]\cdot \nabla_v \phi  +
  \Delta_v \phi + v \cdot \nabla_x \phi.
\end{equation}

Besides, any solution $g$ to the equation 
$$
\partial_t g = \alpha \, g + \beta \cdot \nabla_v g + \Delta_v g \pm v
\cdot \nabla_x g
$$
satisfies at least formally (by performing two integrations by parts) the identity
\begin{multline*} 
  {{\rm d} \over {\rm d}t} \int_{\T^d \times \R^d} {|g|^p \over p}
  \dd x \dd v = - (p-1) \int_{\T^d \times \R^d} |\nabla_v g|^2 \,
  |g|^{p-2} \dd x \dd v \\ +
  \int_{\T^d \times \R^d} \left(\alpha - {\hbox{div}_v \beta \over p} \right) \,
  |g|^p \dd x \dd v.
\end{multline*}
As a consequence, for $\phi$ solution to the equation
\begin{equation}\label{eq:eqSBm*}
\partial_t \phi = \BB^*_m \phi, 
\end{equation}
we have
\begin{equation*}
{{\rm d} \over {\rm d}t}  \int_{\T^d \times \R^d} {|\phi|^p \over p} \dd x \dd v 
\le   \int_{\T^d \times \R^d} |\phi|^p \, \psi^2_{p,m} \dd x \dd v,
\end{equation*}
with 
\begin{equation}\label{eq:def-phipm}
  \quad   \psi^2_{p,m} :=  \left(1 - {2 \over p}\right) {\Delta_v m \over
    m}  
  +   {1 \over p}\, \hbox{div}_v F + {2 \over p} \,  {|\nabla_v m|^2 \over
    m^2} 
  - F \cdot {\nabla_v m \over m}  - M \chi_R  . 
\end{equation}
Recalling that 
$$
{\Delta_v m\over m} \mathop{\sim}_{v \to \infty} k\kappa \, (d+s-2)
|v|^{s-2} + k^2 \kappa^2 \, |v|^{2s-2}, \qquad \hbox{div}_v F \mathop{\sim}_{v \to \infty}
(d+\gamma-2) \, |v|^{\gamma-2},
$$
$$
{|\nabla_v m|^2\over m^2} \mathop{\sim}_{v \to \infty} \kappa^2 k^2 \,
|v|^{2s-2}, \qquad F \cdot {\nabla_v m \over m} \mathop{\sim}_{v \to
  \infty} k\kappa |v|^{\gamma+s-2},
$$
we have for an exponential weight function (so that $s > 0$ and
$k=s$)
$$
 \psi^2_{p,m}\mathop{\sim}_{v \to \infty}  \kappa^2  s^2  \, |v|^{2s-2} - s\kappa
|v|^{\gamma+s-2},
$$
and for a polynomial weight function (so that $s=0$), we have 
$$
 \psi^2_{p,m} \mathop{\sim}_{v \to \infty} \left( { d+\gamma-2 \over p} - k  \right)   \,
|v|^{\gamma-2},
$$
with again $\psi^2_{p,m}(v) \sim - \theta \langle v
\rangle^{\gamma+s-2}$ for large $v$ when $\gamma + s >2$.  In both
case, we conclude that for any $a > a_{p,m}$
\begin{equation}\label{eq:phiLp} 
 \frac1p \dt \|\phi
    \|^p_{L^p(\R^d)} \le a \|\phi \|^p_{L^p(\R^d)} - \theta' \left\| \phi \langle \cdot
  \rangle^{(\gamma+s-2)/p} \right\|^p_{L^p(\R^d)}
\end{equation}
for some small $\theta'$, uniformly when $p \to \infty$.

\medskip
\noindent {\sl Step 2. }
Now, we write
\begin{eqnarray*}
\partial_t (\partial_{v_i} \phi)
&=& \partial_{v_i} \BB^*_m \phi \\ 
&=&  \Delta_v (\partial_{v_i} \phi)   
- \sum_{j=1} ^d \partial_{v_i} \left(F_j - 2 \, {\partial_{v_j} m \over m}
\right)  \, \partial_{v_j} \phi
 - \left[F - 2 \, {\nabla_v m \over m} \right]\cdot \nabla_v (\partial_{v_i} \phi)
\\
&&+  \left[ {\Delta_v m \over m}  - F \cdot {\nabla_v m \over m} - M \,
  \chi_R  \right]\, \partial_{v_i} \phi
 +  
 \partial_{v_i} \left[ {\Delta_v m \over m}  - F \cdot {\nabla_v m \over m} - M \, \chi_R  \right] \, \phi
 \\
&&- v\cdot \nabla_x \left( \partial_{v_i} \phi \right)
- \partial_{x_i} \phi\\
 &=:&  \Delta_v (\partial_{v_i} \phi)   - \sum_{j=1} ^d \partial_{v_i}B^*_{m,j} \, (\partial_{v_j}
 \phi)  - B^*_m \cdot \nabla_v (\partial_{v_i} \phi) + A^*_m \, (\partial_{v_i} \phi)
 +  (\partial_{v_i}  A_m ^*)\, \phi \\
&&- v\cdot \nabla_x \left( \partial_{v_i} \phi \right)
- \partial_{x_i} \phi.
\end{eqnarray*}
By integration by parts, we deduce 
\begin{multline*}
  \frac1p {{\rm d} \over {\rm d}t} \int_{\T^d \times \R^d} \left(
    \sum_{i=1} ^d |\partial_{v_i} \phi|^p\right) \dd x \dd v =
  \sum_{i=1} ^d \int_{\T^d \times \R^d} (\partial_t \partial_{v_i}
  \phi) \, \partial_{v_i} \phi |\partial_{v_i} \phi|^{p-2} \dd x \dd v
  \\
  \le \int_{\T^d \times \R^d} \left(A_m ^* + {1 \over p} \, \hbox{div}_v
    B_m ^*\right) \left( \sum_{i=1} ^d |\partial_{v_i} \phi|^p \right) \dd
  x \dd v\\
  + \sum_{i=1} ^d \int_{\T^d \times \R^d} \Big[(\partial_{v_i} A_m ^* )\, \phi -
  ( \partial_{v_i}B_{m,j}^*) \, \partial_{v_j} \phi
  \Big]\, \partial_{v_i} \phi |\partial_{v_i} \phi|^{p-2} \\ +
  \sum_{i=1} ^d \int_{\T^d \times \R^d} \partial_{x_i} \phi
  \, \partial_{v_i} \phi |\partial_{v_i} \phi|^{p-2} \dd x \dd v \\
  \le \int_{\T^d \times \R^d} \left( \frac{\var^p}{p} + \psi^3_{p,m} + { 1 \over
      p}\, \sup_{i=1, ..., d} |\partial_{v_i}A_m ^* | \right) \, \left( \sum_{i=1} ^d
    |\partial_{v_i} \phi|^p \right) \dd x \dd v \\ + \frac{1}{p' \var^{p'}}
  \int_{\T^d \times \R^d} \left( \sum_{i=1} ^d |\partial_{x_i} \phi|^p
  \right) \dd x \dd v + {1 \over p' } \int_{\T^d \times \R^d} \left(
    \sum_i |\partial_{v_i}A_m ^* | \right) \, |\phi|^p \dd x \dd v
\end{multline*}
where
$$
\psi^3_{p,m}:= \sup_{i=1,\dots, d} \sum_{j=1} ^d | \partial_{v_j} B_{m,i}
^*| + A_m ^*+ {1 \over p} \, \hbox{div}_v B_m ^*.
$$

\smallskip We have
$$
\sup_{i=1,\dots,d} \sum_{j=1} ^d | \partial_j B_{m,i} ^*| \le  \left(1 + (\gamma-2) \sqrt{d}\right) \, |v|^{\gamma-2}
+ 2 k \kappa \left(1 + (s-2) \sqrt{d}\right) \, |v|^{s-2}, 
$$
as well as 
\begin{align*}
  A_m ^*(v) +  {1 \over p} \, \hbox{div}_v  B_m ^*(v)
  \sim \left(1 - \frac2p\right) {\Delta m \over m} 
  +  \frac2p  { |\nabla  m|^2 \over m^2} +  \frac1p \, \hbox{div}F - F \cdot {\nabla m  \over m}
  \sim \psi^0_{p,m} (v)
\end{align*}
and 
\[
\partial_{v_i} A_m^* \sim \left\{ \begin{array}{l} 
k (\gamma-2) (k+d-3) \\[0.2cm]
\qquad \mbox{ when } \gamma \ge 2 \mbox{ and } m(v) =
\langle v \rangle^k, \\[0.3cm]
\left[ 2 \kappa^2 s^2 + \kappa^2 s^2 (2s - 4) \right] v_i |v|^{2s-4} -
\left[ 2 \kappa s + \kappa s (\gamma+s-4) \right] v_i |v|^{\gamma+s-4}
\\[0.2cm] \qquad \mbox{ when } \gamma \ge 1 \mbox{ and } m(v) =
e^{\kappa \langle v \rangle^s}, 
\end{array}
\right.
\]
which yields
$$ 
|\partial_{v_i} A_m ^*| \lesssim W(v), \quad W(v) := \langle v \rangle^{\max\{2s,s+\gamma\} -   3}.
$$

\smallskip
For an exponential weight function (so that $s > 0$), we have thus
$$
\psi^3_{p,m}(v) \sim  \psi^2_{p,m}(v) \sim  \kappa^2  k^2  \, |v|^{2s-2} - k\kappa |v|^{\gamma+s-2}
$$
and for a polynomial weight function (so that $s=0$), we have 
$$
\limsup \psi^3_{p,m}\le  \Bigl( 1 + (\gamma-2) \sqrt{d} + { d+\gamma-2 \over p} - k  \Bigr)   \, |v|^{\gamma-2}.  
$$

In both case, we conclude that for any $a > a_{1}(p,m)$ and for $M,R$
large enough
$$
\frac1p \dt \left( \sum_{i=1} ^d \|\partial_{v_i} \phi \|^p_{L^p}
\right) \le a \, \left( \sum_{i=1} ^d \|\partial_{v_i} \phi \|^p_{L^p}
\right) + C \left( \sum_{i=1} ^d \|\partial_{x_i} \phi \|^p_{L^p}
\right) + C \|\phi \|^p_{L^p(W)}
$$
for some $C$ depending on $a$, uniformly when $p \to \infty$. Defining
again the norm
\[
\|\phi \|_{\tilde W^{1,p}(m)}:= \| \phi \|_{L^p(m)} + \sum_{i=1} ^d \|\partial_{x_i} \phi
\|_{L^p(m)} + \zeta \sum_{i=1} ^d \|\partial_{v_i} \phi
\|_{L^p(m)}
\]
for $\zeta$ small enough, equivalent to $W^{1,p}(m)$, and using that $W \le
C \langle v \rangle^{\gamma+s-2}$, we obtain the following
differential inequality
$$
\frac1p \dt \| \phi \|_{\tilde W^{1,p}(m)} ^p \le a \, \| \phi
\|_{\tilde W^{1,p}(m)} ^p
$$
uniformly as $p \to \infty$.  We have thus proved
$$
\forall \, t \ge 0, \,\, \forall \, \phi \in W^{1,p}, \quad \|
\mathscr S_{\BB^*_{m}}(t) \phi \|_{W^{1,p}} \le C \, e^{at} \| \phi \|_{W^{1,p}}
$$
for some $C>0$ (depending on $a$), uniformly as $p \to \infty$.

\medskip
\noindent
{\sl Step 3. }
 For any $h \in W^{-1,p}$ and $\phi \in W^{1,p'}$,  we have 
\begin{eqnarray*}
  \langle \mathscr S_{\BB_m}(t) h , \phi \rangle
  & =& \langle  h , \mathscr S_{\BB^*_m}(t) \phi \rangle
  \\
  &\le& \| h \|_{W^{-1,p}}  \, \| \mathscr S_{\BB^*_m}(t) \phi
  \|_{W^{1,p'}} \le 
  C \, e^{ at } \,   \| h \|_{W^{-1,p}} \, \|  \phi \|_{W^{1,p'}},
\end{eqnarray*} 
so that 
\begin{eqnarray*}
  \forall \, h \in W^{-1,p}, \quad  \| \mathscr S_{\BB_m}(t)  \, h \|_{W^{-1,p}}  \le C \, e^{at} \,   \| h \|_{W^{-1,p}}.
\end{eqnarray*}
Then, coming back to the operator $\BB$, we conclude with 
$$
\| \mathscr S_{\BB}(t) \, f \|_{W^{-1,p}(m)} \le C  e^{a \, t}\, \|  f \|_{W^{-1,p}(m)},
$$
so that $\BB-a$ is hypodissipative in $W^{-1,p}(m)$ for any $1 \le p \le \infty$.  \end{proof}

\medskip We introduce for $\zeta >0$ the norm
$$
\|\psi \|_{\FF_\infty} := \max\left\{ \|\psi \, \langle v \rangle^{-1}
  \|_{L^\infty} \, ; \ \sup_{i=1,\dots,d} \|\partial_{x_i} \psi
  \|_{L^\infty}\, ; \ \zeta \sup_{i=1,\dots,d} \|\partial_{v_i} \psi \|_{L^\infty} \right\},
$$
and the associated space 
$$
\FF_\infty:= \left\{\psi \in W^{1,\infty}_{\mbox{{\scriptsize loc}}} ; \,\, \|\psi \|_{\FF_\infty} < \infty \right\} 
$$
and its dual $(\FF_\infty)'$. Observe that
\begin{multline*}
\|f \|_{L^1(\langle v \rangle)}:= \sup_{\phi \in L^\infty, \, \|\phi
  \|_{L^\infty} \le 1} \int_{\T^d \times \R^d} \langle v \rangle \, f \, \phi \dd x
\dd v \\ 
= \sup_{\psi \in L^\infty _{\mbox{{\scriptsize loc}}}; \|\psi \langle v \rangle^{-1} \|_{L^\infty} \le 1 }
\int_{\T^d \times \R^d} f \, \psi \dd x \dd v,
\end{multline*}
so that $L^1(\langle v \rangle) \subset (\FF_\infty)'  $. 

\begin{lem}\label{lem:FPhomo-BLipPrime} 
  Assume that $\gamma \in [2,2 + 1/(d-1))$, then for any 
  \[
  a> \tilde a_\gamma := (d-1) (\gamma-2) - 1,
  \] 
  (observe that $\tilde a_\gamma <0$ from the assumptions), we can choose
  $R,M$ large enough in the definition \eqref{eq:FPdefB} of $\BB$ such
  that the operator $\BB-a$ is dissipative in $(\FF_\infty)'$.
\end{lem}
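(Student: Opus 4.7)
The plan is to argue by duality: it suffices to establish a semigroup estimate $\|\mathscr{S}_{\BB^*}(t)\psi\|_{\FF_\infty} \le C e^{at}\|\psi\|_{\FF_\infty}$, possibly in an equivalent norm on $\FF_\infty$, whence $\BB-a$ is dissipative on the predual $(\FF_\infty)'$. The formal adjoint with respect to Lebesgue measure is
\[
\BB^{*}\psi \;=\; \Delta_v\psi \;-\; F\cdot\nabla_v\psi \;+\; v\cdot\nabla_x\psi \;-\; M\chi_R\psi.
\]
For a solution $\psi(t)=\mathscr{S}_{\BB^*}(t)\psi_0$, I would track the three ingredients of the $\FF_\infty$ norm, namely $A_1(t):=\|\psi\langle v\rangle^{-1}\|_{L^\infty}$, $A_2(t):=\sup_i\|\partial_{x_i}\psi\|_{L^\infty}$, and $A_3(t):=\zeta\sup_i\|\partial_{v_i}\psi\|_{L^\infty}$, each via a suitable maximum-principle argument.

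For $A_1$, setting $\tilde\psi=\psi\langle v\rangle^{-1}$ produces an equation whose zero-order coefficient
\[
c(v)\;=\;\frac{\Delta_v\langle v\rangle}{\langle v\rangle}\;-\;\frac{F\cdot v}{\langle v\rangle^{2}}\;-\;M\chi_R(v)
\]
tends to $-1$ as $|v|\to\infty$ when $\gamma=2$ and to $-\infty$ when $\gamma>2$, so for $R,M$ large one has $\sup_v c(v)\le a$ for any $a>\tilde a_\gamma$. The maximum principle then yields $A_1'(t)\le a A_1(t)$. Since $\partial_{x_i}$ commutes with $\BB^{*}$, the same structural change of variable applied to $\partial_{x_i}\psi$ controls $A_2$.

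The heart of the proof is the estimate of $A_3$. Setting $w_i:=\partial_{v_i}\psi$, differentiating $\BB^{*}\psi$ gives
\[
\partial_t w_i \;=\; \BB^{*}w_i \;-\; \sum_{j=1}^{d}(\partial^{2}_{v_iv_j}\Phi)\,w_j \;+\; \partial_{x_i}\psi \;-\; M(\partial_{v_i}\chi_R)\,\psi.
\]
For $|v|\ge R_0$ the Hessian equals $\partial^{2}_{v_iv_j}\Phi=\delta_{ij}\langle v\rangle^{\gamma-2}+(\gamma-2)v_iv_j\langle v\rangle^{\gamma-4}$. At the maximum point $(i_0,x^{*},v^{*})$ of $|w_{i_0}|$, where $|w_j|\le|w_{i_0}|$ for all $j$, the crude bound $|v_iv_j|\le\langle v\rangle^{2}$ leads to the key inequality
\[
-\sum_{j=1}^{d}(\partial^{2}_{v_{i_0}v_j}\Phi)\,w_j \;\le\; \bigl[-1+(d-1)(\gamma-2)\bigr]\,\langle v\rangle^{\gamma-2}\,|w_{i_0}|
\;=\; \tilde a_\gamma\,\langle v\rangle^{\gamma-2}\,|w_{i_0}| \;\le\; \tilde a_\gamma\,|w_{i_0}|,
\]
the last step using $\tilde a_\gamma<0$ and $\langle v\rangle^{\gamma-2}\ge 1$ for $\gamma\ge2$. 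The remaining commutator contributions are controlled by $|\partial_{x_{i_0}}\psi|\le A_2$ and $M\,|\partial_{v_{i_0}}\chi_R|\,|\psi|\le C_{M,R}\,A_1$.

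Collecting these three inequalities, I arrive at a coupled system of the form $A_1'\le aA_1$, $A_2'\le 0$, $A_3'\le \tilde a_\gamma A_3+\zeta A_2+C_{M,R}\zeta A_1$. Introducing an equivalent norm on $\FF_\infty$ that combines the three pieces with well-chosen weights, and taking $\zeta$ small and $R,M$ large, the cross-couplings in the $A_3$ inequality are absorbed and the combined norm satisfies $\frac{d}{dt}\|\psi\|_{\FF_\infty,\sim}\le a\|\psi\|_{\FF_\infty,\sim}$ for every $a>\tilde a_\gamma$; duality then delivers the claimed dissipativity in $(\FF_\infty)'$. The main obstacle lies in the Hessian commutator bound of the third step, where the constraint $\gamma<2+1/(d-1)$ is exactly what makes the cross contribution $(d-1)(\gamma-2)$ strictly smaller than the self-dissipation coefficient $1$; the remaining bookkeeping mirrors the scheme used in Lemmas~\ref{lem:FPhomo-BW1p} and \ref{lem:FPhomo-BW-1}.
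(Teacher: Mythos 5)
Your overall strategy coincides with the paper's: pass to the dual semigroup $\mathscr S_{\BB^*}(t)$, estimate it on $\FF_\infty$ (the paper does this via $L^p$ energy estimates and lets $p\to\infty$, which is essentially a rigorous substitute for your pointwise maximum principle on the non-compact domain $\T^d\times\R^d$), and your key Hessian computation is exactly the paper's $T_1+T_2$ step: the diagonal of $D^2_v\Phi$ gives $-\langle v\rangle^{\gamma-2}$ and the off-diagonal is crudely bounded by $(d-1)(\gamma-2)\,|v|^2\langle v\rangle^{\gamma-4}$, producing the abscissa $\tilde a_\gamma$.

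There is, however, a genuine gap in the way you close the differential system. Your three inequalities are $A_1'\le a A_1$, $A_2'\le 0$, $A_3'\le \tilde a_\gamma A_3+\zeta A_2+C\zeta A_1$, with $A_2=\sup_i\|\partial_{x_i}\psi\|_{L^\infty}$ \emph{unweighted}. The equation for $\partial_{x_i}\psi$ is autonomous ($\partial_t(\partial_{x_i}\psi)=\BB^*\partial_{x_i}\psi$) and its only zero-order damping is $-M\chi_R$, which vanishes for large $|v|$; so $A_2'\le 0$ is all you can get, nothing in the system feeds back negatively into $A_2$, and consequently no equivalent norm built from $A_1,A_2,A_3$ can satisfy $\frac{d}{dt}\|\cdot\|\le a\|\cdot\|$ with $a<0$: if at some time the maximum (or the dominant weighted component) is carried by $A_2$, the best you obtain is a non-increasing quantity, not exponential decay, and the lemma precisely requires a strictly negative rate (the $W_1$ application hinges on it). The paper's way around this is to put the weight $\langle v\rangle^{-1}$ on the $x$-derivative as well, i.e.\ to work with $\partial_{x_i}\phi$ where $\phi=\psi\langle v\rangle^{-1}$: since $\partial_{x_i}$ commutes with $\BB^*$ and with the weight, $\partial_{x_i}\phi$ solves the same equation \eqref{eq:phiB*m} as $\phi$ and therefore inherits the strictly dissipative bound \eqref{eq:phiLp} with rate arbitrarily close to $-1\le\tilde a_\gamma$. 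If you adopt this fix you must then revisit the coupling term $\partial_{x_{i_0}}\psi$ in your $A_3$ estimate, since at a maximum point located at large $|v^*|$ it is only controlled by $\langle v^*\rangle\sup_i\|\langle v\rangle^{-1}\partial_{x_i}\psi\|_{L^\infty}$, and this extra factor $\langle v^*\rangle$ has to be absorbed (this is the delicate bookkeeping the paper performs with the parameters $\var$, $K$, $\zeta$ and $p$ large). As written, your proof does not establish the claimed dissipativity at a negative rate.
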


\begin{proof}[Proof of Lemma~\ref{lem:FPhomo-BLipPrime}.] 
  The proof is an adaptation of the proof of
  Lemma~\ref{lem:FPhomo-BW-1}, and we sketch it briefly, writing only
  the needed formal a priori estimates.

\medskip
\noindent {\sl Step 1. } For any $\psi \in
L^\infty_{\mbox{{\scriptsize loc}}}$, we denote by $\psi_t := \mathscr
S_{\BB^*}(t) \psi$ the solution (when it exists) to the dual evolution
equation
\begin{equation}\label{eq:psiB*}
\partial_t \psi_t = \BB^* \psi_t, \quad \psi_0 = \psi, 
\end{equation}
with 
$$
\BB^* \psi := \Delta_v \psi - F \cdot \nabla_v \psi - M \, \chi_R \psi
+ v \cdot \nabla_x \psi.
$$
Introducing the new unknown $\phi := \psi \langle v \rangle^{-1}$, we
observe that when $\psi_t$ is a solution to \eqref{eq:psiB*}, then the
associated function $\phi_t$ is a solution to the rescaled equation
\begin{equation}\label{eq:phiB*m}
  \partial_t \phi_t = \langle v \rangle^{-1} \partial_t \psi_t =
  \langle v \rangle^{-1}\, \BB^*(\langle v \rangle \phi_t) =:
  \BB^*_{\langle \cdot \rangle} \phi_t,
  \quad \psi_0 = \psi, 
\end{equation}
where $\BB^*_{\langle \cdot \rangle}$ is defined by \eqref{eq:defB*m}. 

\medskip
\noindent
{\sl Step 2. }  We calculate
\begin{multline*}
\partial_t \partial_{v_i} \psi 
= \partial_{v_i} \BB^* \psi =  \Delta_v \partial_{v_i} \psi  - \left(\partial_{v_i} F_j\right)
\, \partial_{v_j} \psi - F_j \, \partial_{v_i} \partial_{v_j} \psi \\
 - M \, \chi_R  \, \partial_{v_i} \psi
 +  M \, \left(\partial_{v_i}  \chi_R \right)\, \psi + v \cdot
 \nabla_x \left( \partial_{v_i} \psi \right) + \partial_{x_i} \psi,
\end{multline*}
with $F_j \sim v_j \langle v \rangle^{\gamma-2}$ and $\partial_{v_i}
F_j \sim \delta_{ij} \, \langle v \rangle^{\gamma-2} + (\gamma-2) \,
v_i v_j \, \langle v \rangle^{\gamma-4}$. We deduce
\begin{multline*}
  \frac1p \dt \int_{\T^d \times \R^d} \left( \sum_{i=1} ^d |\partial_{v_i}
    \psi|^p \right) \dd x \dd v 
  \\
  \le \int_{\T^d \times \R^d}  \Big[ - \left(\delta_{ij} \, \langle v
    \rangle^{\gamma-2} 
    + (\gamma-2) \, v_i v_j \, \langle v \rangle^{\gamma-4}\right)
  \, \partial_{v_j} \psi
  - v_j \langle v \rangle^{\gamma-2} \, \partial_{v_j} \partial_{v_i} \psi  
  \\
  - M \, \chi_R  \,  \partial_{v_i} \psi  +  M \, \left(\partial_{v_i}
    \chi_R \right)\, \psi + \partial_{x_i} \psi
  \Big] \, \partial_{v_i} \psi |\partial_{v_i} \psi|^{p-2} \dd x \dd v
  =: T_1 +\dots + T_5,
 \end{multline*}
 with the convention of summation of repeated indices. 
 We compute
 \begin{eqnarray*}
   T_1 
   &=& - \int_{\T^d \times \R^d} \langle v \rangle^{\gamma-2} \, \left(
     \sum_{i=1} ^d |\partial_{v_i} \psi |^p \right) \dd x \dd v
   \\
   T_2 
   &\le& - \sum_{i=1} ^d (\gamma-2) \int_{\T^d \times \R^d}  \langle v \rangle^{\gamma-4}
   \, |v_i|^2 \, |\partial_{v_i} \psi |^p \dd x \dd v \\
   && \quad 
   +  \sum_{i\not=j}(\gamma-2) \int_{\T^d \times \R^d} |v|^2 \langle v \rangle^{\gamma-4} \,
   \left( {1 \over p} |\partial_{v_j} \psi|^p + {1 \over p'} \,
     |\partial_{v_i} \psi|^p \right) \dd x \dd v
   \\
   &\le& (d-1) (\gamma-2)  \int_{\T^d \times \R^d} |v|^2 \langle v \rangle^{\gamma-4}
   \left( \sum_{i=1} ^d |\partial_{v_i} \psi|^p \right) \dd x \dd v
   \\
   T_3 
   &=& -  {1 \over p}  \sum_{i,j =1}^d   \int_{\T^d \times \R^d}  v_j \langle v
   \rangle^{\gamma-2} \, \partial_{v_j} |\partial_{v_i} \psi|^p \dd x
   \dd v \\
   &=&  {1 \over p}  \sum_{i,j=1} ^d \int_{\T^d \times \R^d} \left( 
     \langle v \rangle^{\gamma-2} + (\gamma-2) \, v_i^2 \, \langle v
     \rangle^{\gamma-4} \right) |\partial_{v_i} \psi|^p \dd x \dd v
   \\
   &\le&  {d \over p}  (\gamma-1)   \int_{\T^d \times \R^d}   \langle v
   \rangle^{\gamma-2} \left( \sum_{i=1} ^d |\partial_{v_i} \psi|^p
   \right) \dd x \dd v
\end{eqnarray*}
and
\begin{eqnarray*}
  T_4
  &=& \int_{\T^d \times R^d}  \left[
    - M \, \chi_R  \,  \partial_{v_i} \psi  +   M
    \left[ (\partial_{v_i} \chi)(v/R)\right] {\langle v \rangle \over R}
    \, {\psi \over \langle v \rangle}
  \right] \partial_{v_i} \psi |\partial_{v_i} \psi|^{p-2} \dd x \dd v
  \\\
  &\le& 
  C \, M \, \|\psi \langle v \rangle^{-1} \|_{L^p} \left( \sum_{i=1}
    ^d \|\partial_{v_i} \psi \|_{L^p}^{p-1} \right)
\end{eqnarray*}
and 
\begin{equation*}
  T_5 \le \frac{\var^p}{p} \int_{\T^d \times \R^d} \left( \sum_{i=1} ^d
    |\partial_{v_i} \psi|^p \right) \dd x \dd v + \frac{1}{p' \var^{p'}} \int_{\T^d \times R^d} \left( \sum_{i=1} ^d
    |\partial_{x_i} \psi|^p \right) \dd x \dd v.
\end{equation*}
All in all, we have proved
\begin{multline*}
\frac1p \dt \int_{\T^d \times \R^d} \left( \sum_{i=1} ^d
  |\partial_{v_i} \psi|^p \right) \dd x \dd v
 \\ \le \left[ {d (\gamma-1) \over p} + \frac{\var^p}{p} + (d-1) (\gamma-2) - 1\right]
 \int_{\T^d \times \R^d} \left( \sum_{i=1} ^d |\partial_{v_i} \psi|^p
 \right) \dd x \dd v \\
 + \frac{1}{p' \var^{p'}} \int_{\T^d \times \R^d} \left( \sum_{i=1} ^d
    |\partial_{x_i} \psi|^p \right) \dd x \dd v  + C \, M \, \left\|\psi \langle v \rangle^{-1} \right\|_{L^p} \,
  \left( \sum_{i=1} ^d \|\partial_{v_i} \psi \|_{L^p}^{p-1} \right).
 \end{multline*}

 We recall that any solution $\phi_t$ of \eqref{eq:phiB*m} satisfies
 \eqref{eq:phiLp}.  Fixing $a > (d-1) (\gamma-2) - 1$, next $\zeta_0>
 0$ so that $a-\zeta_0 > (d-1) (\gamma-2) - 1$, and then fixing $M$
 and $R$ so that \eqref{eq:phiLp} holds with the choice $a-\zeta_0$,
 $M$, $R$, we have for any $\zeta \in (0,\zeta_0)$ and $K \ge 1$ the
 differential inequality
  \begin{multline*}
  \frac1p \dt \left[ \|\phi \|_{L^p}^p + \sum_{i=1}^d
    \| \partial_{x_i} \phi \|_{L^p} ^p + \zeta  \sum_{i=1} ^d
    \|\partial_{v_i} \psi \|_{L^p} ^p\right]
  \le  (a-\zeta_0) \, \left( \|\phi \|_{L^p}^p  + \sum_{i=1} ^d \|
    \partial_{x_i} \phi \|_{L^p} ^p \right) 
  \\
+ \zeta \,  \Bigg[  \left( {d (\gamma-1) \over p} + (d-1) (\gamma-2) - 1\right)
   \left( \sum_{i=1} ^d  \|\partial_{v_i} \psi \|_{L^p}^p \right) + C \, M \, {K^p \over p} \, \|\phi
   \|_{L^p}^p \\
   + {C \, M \over K} \left( \sum_{i=1} ^d \|\partial_{x_i} \psi
     \|_{L^p}^p \right) \Bigg].
    \end{multline*}

    Taking $K$, $p$ large enough and then $\zeta$ small enough, we
    deduce
    \begin{multline*}
      \frac1p \dt \left[ \|\phi \|_{L^p}^p + \sum_{i=1}^d
        \| \partial_{x_i} \phi \|_{L^p} ^p + \zeta \sum_{i=1} ^d
        \|\partial_{v_i} \psi \|_{L^p} ^p\right] \\ \le a \left[ \|\phi
        \|_{L^p}^p + \sum_{i=1}^d \| \partial_{x_i} \phi \|_{L^p} ^p +
        \zeta \sum_{i=1} ^d \|\partial_{v_i} \psi \|_{L^p} ^p\right]
    \end{multline*}
    uniformly for $p$ large. As a consequence, we get by Gronwall
    lemma and then passing to the limit $p\to\infty$
    $$
    \|\mathscr S_{\BB^*}(t) \psi \|_{\FF_\infty} = \|\psi_t
    \|_{\FF_\infty} \le e^{at} \, \|\psi \|_{\FF_\infty}.
    $$    
     We conclude the proof by duality. \end{proof}
     
%

\subsection{Regularisation in the spatially periodic case}
 
We prove a regularization property of the kinetic Fokker-Planck
equation related to the theory of hypoellipticity. It can be
considered well-known and ``folklore'', but we include a sketch of
proof for clarity and in order to make explicit the estimate.  The
argument follows closely the methods and discussions in
\cite{Herau2007} and \cite[Section~A.21]{MR2562709}.

\begin{lem}\label{lem:reg-kfp}
  The semigroup $\mathscr S_{\BB}$ satisfies (with no claim of optimality on
  the exponents) first (gain of derivative in $L^2$ spaces)
\[
{\bf (1)} \quad \forall \, t \in [0,1], \ \forall \, k \in \N^* \quad 
\left\{ 
\begin{array}{l}\displaystyle
\| \mathscr S_{\BB}(t) f \|_{H^k(\mu^{-1/2})} \lesssim \frac{1}{t^{3k/2}} \, \| f\|_{L^2(\mu^{-1/2})},
\vspace{0.3cm} \\ \ds  
\| \mathscr S_{\BB}(t) f \|_{L^2(\mu^{-1/2})} \lesssim \frac{1}{t^{3k/2}} \, \| f\|_{H^{-k}(\mu^{-1/2})}.
\end{array}
\right.
\]
second (gain of integrability at order zero)
\[
{\bf (2)} \quad \forall \, t \in [0,1], \quad 
\left\{ 
\begin{array}{l}\displaystyle
\| \mathscr S_{\BB}(t) f \|_{L^2(\mu^{-1/2})} \lesssim \frac{1}{t^{(5d+1)/2}} \, \|
f\|_{L^1(\mu^{-1/2})}, \vspace{0.3cm} \\ \ds
\| \mathscr S_{\BB}(t) f \|_{L^\infty(\mu^{-1/2})} \lesssim \frac{1}{t^{(5d+1)/2}} \, \|
f\|_{L^2(\mu^{-1/2})}
\end{array}
\right.
\]
third (gain of integrability at order one)
\[
{\bf (3)} \quad \forall \, t \in [0,1], \quad 
\left\{ 
\begin{array}{l}\displaystyle
\| \nabla \mathscr S_{\BB}(t) f \|_{L^2(\mu^{-1/2})} \lesssim \frac{1}{t^{(5d+1)/2}} \, \|
\nabla f\|_{L^1(\mu^{-1/2})}, \vspace{0.3cm} \\ \ds
\| \nabla \mathscr S_{\BB}(t) f \|_{L^\infty(\mu^{-1/2})} \lesssim \frac{1}{t^{(5d+1)/2}} \, \|
\nabla f\|_{L^2(\mu^{-1/2})}
\end{array}
\right.
\]
fourth (gain of integrability at ordre minus one)
\[
{\bf (4)} \quad \forall \, t \in [0,1], \quad 
\left\{ 
\begin{array}{l}\displaystyle
  \| \mathscr S_{\BB}(t) f \|_{W^{-1,\infty}(\mu^{-1/2})} \lesssim \frac{1}{t^{(5d+1)/2}} \, \|
  f\|_{W^{-1,2}(\mu^{-1/2})}, \vspace{0.3cm} \\ \ds
  \|  \mathscr S_{\BB}(t) f \|_{W^{-1,2}(\mu^{-1/2})} \lesssim \frac{1}{t^{(5d+1)/2}} \, \|
  f\|_{W^{-1,1}(\mu^{-1/2})}.
\end{array}
\right.
\]
\end{lem}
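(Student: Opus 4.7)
Since $\BB = \LL - M\chi_R$ differs from the unperturbed kinetic Fokker--Planck operator $\LL$ only by the bounded multiplication $-M\chi_R$, which on the short time window $t\in[0,1]$ contributes only an absorbable exponential prefactor, it suffices to prove all four regularization statements with $\LL$ in place of $\BB$ and then reinstate the $-M\chi_R$ term by a standard Duhamel perturbation on $[0,1]$. Estimate (1) is the core hypoelliptic regularization; the integrability gains (2), (3), and (4) follow from (1) by Sobolev embedding, by commutation with $\nabla$, and by duality.

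\textbf{Proof of (1).} Following H\'erau \cite{Herau2007} and Villani \cite[Section~A.21]{MR2562709}, I would construct the twisted quadratic functional
\[
\Phi_t(f) := \|f\|_{L^2(\mu^{-1/2})}^2 + \alpha\, t\, \|\nabla_v f\|_{L^2(\mu^{-1/2})}^2 + \beta\, t^2\, \langle \nabla_v f,\nabla_x f\rangle_{L^2(\mu^{-1/2})} + \gamma\, t^3\, \|\nabla_x f\|_{L^2(\mu^{-1/2})}^2
\]
with small positive constants $\alpha,\beta,\gamma$ satisfying $\beta^2 < \alpha\gamma$. Differentiating along $\partial_t f = \BB f$, the diffusion $\Delta_v$ produces coercive terms involving $\|\nabla_v^2 f\|^2$ and, after the weighted integration by parts used in the proof of Lemma~\ref{lem:FPhomo-BLp}, also $\|\nabla_v f\|^2$; the transport $-v\cdot\nabla_x$ is antisymmetric, and its crucial commutator $[\nabla_v,-v\cdot\nabla_x] = -\nabla_x$, paired with the mixed term $\beta t^2\langle\nabla_v f,\nabla_x f\rangle$, yields (via Cauchy--Schwarz and the good coercive contributions) coercivity of $\|\nabla_x f\|^2$ at rate $t^{-3}$. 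Choosing the constants appropriately one obtains $\tfrac{d}{dt}\Phi_t(f)\le 0$ on $[0,1]$, hence
\[
\alpha t\,\|\nabla_v f(t)\|^2 + \gamma t^3\,\|\nabla_x f(t)\|^2 \le \Phi_0(f) = \|f_0\|^2_{L^2(\mu^{-1/2})},
\]
which proves the case $k=1$ of (1). For $k\ge 2$ one iterates on successive intervals $[jt/k,(j+1)t/k]$, gaining one derivative (and a factor $t^{-3/2}$) per step; the companion bound $H^{-k}\to L^2$ is obtained by duality using that the formal adjoint $\BB^*$ has the same hypoelliptic structure.

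\textbf{Proofs of (2), (3), (4).} The two inequalities in (2) are dual to each other through $\BB^*$, so it suffices to establish $L^2(\mu^{-1/2})\to L^\infty(\mu^{-1/2})$. By (1), $\mathscr S_\BB(t)$ maps $L^2(\mu^{-1/2})$ into $H^k(\mu^{-1/2})$ at rate $t^{-3k/2}$; since the weight $\mu^{-1/2}$ is smooth and positive, the Sobolev embedding $H^k(\mu^{-1/2})\hookrightarrow L^\infty(\mu^{-1/2})$ holds for any integer $k>d$ (by Leibniz) on $\T^d\times\R^d$. Choosing such a $k$, and being generous with the remaining constants, yields the (non-optimal) exponent $(5d+1)/2$ as claimed. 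For (3), since $\nabla_x$ commutes with $\BB$, one has $\nabla_x \mathscr S_\BB(t)f = \mathscr S_\BB(t)\nabla_x f$ and (2) applies directly to $\nabla_x f$; for $\nabla_v$, the commutator $[\nabla_v,\BB]$ generates the additional source terms already identified in the proof of Lemma~\ref{lem:FPhomo-BW1p}, which are either integrable in $t$ or may be absorbed by running the hypoelliptic functional at the first-derivative level. Estimate (4) is obtained by dualizing (3), using the definition of $W^{-1,p}(m)$ in \eqref{def:W-1p}.

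\textbf{Main obstacle.} The principal technical point is closing the twisted functional in the proof of (1): the indefinite mixed term $\beta t^2\langle\nabla_v f,\nabla_x f\rangle$ must be dominated by the good dissipative contributions even in the presence of the non-constant drift $F=\nabla\Phi$ and the $v$-dependent weight $\mu^{-1/2}$. As in Villani's hypocoercivity theory, all lower-order contributions (commutators with the weight, commutators with $F$, and the truncation term $-M\chi_R$) can be absorbed into $-\|\nabla_v^2 f\|^2$ and into the coercive piece of $t^2\|\nabla_x f\|^2$ provided $\beta\ll\sqrt{\alpha\gamma}$ and $\gamma\ll\alpha$ are chosen small enough; since optimality in the exponents of $t$ is not claimed, generous applications of Young's and Cauchy--Schwarz inequalities suffice throughout.
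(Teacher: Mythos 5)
Your proof of \textbf{(1)} is essentially the paper's own argument: the same twisted functional $\|f\|^2 + \alpha t\|\nabla_v f\|^2 + \beta t^2\langle\nabla_v f,\nabla_x f\rangle + \gamma t^3\|\nabla_x f\|^2$, the same commutator $[\nabla_v, -v\cdot\nabla_x]=-\nabla_x$ to extract $x$-regularity, and duality via $\BB^*$ for the $H^{-k}\to L^2$ companion estimate. For \textbf{(2)}--\textbf{(4)} you take a slightly different (but legitimate, and in fact one the paper explicitly endorses as an alternative) route: you bypass the Nash-inequality argument entirely, establishing $L^2\to L^\infty$ from \textbf{(1)} plus Sobolev embedding and then deducing the $L^1\to L^2$ bound by duality with $\BB^*$, whereas the paper proves $L^1\to L^2$ directly with the Nash inequality and a time-weighted functional $B\|f\|_{L^1}^2 + t^Z\bar\FF$; your route is cleaner but gives a different (better) exponent, which is harmless since no optimality is claimed. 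Two cautions, though. First, your opening reduction to $\LL$ via Duhamel does not go through naively: the Duhamel integrand inherits the non-integrable singularity $s^{-3k/2}$ (indeed $s^{-\Theta}$ with $\Theta\ge 1$), so the perturbation cannot be closed in one step; fortunately the rest of your proof actually works with $\BB$ directly and absorbs $-M\chi_R$ as a lower-order term, so this first paragraph is an internal inconsistency rather than a real gap, but it should be removed. Second, to close $\dt\Phi_t\le 0$ the $L^2$ term needs a large prefactor $A\gg \alpha,\beta,\gamma, M$ (and the paper in fact only gets $\dt\FF\le -K(\cdots)+C\|f\|_{L^2(\mu^{-1/2})}^2$ and then uses the monotonicity of the $L^2$ norm), so as written your monotonicity claim is slightly too strong; and in the duality steps you should track the weight conjugation ($L^1(\mu^{-1/2})$ has dual $L^\infty(\mu^{1/2})$, not $L^\infty(\mu^{-1/2})$), as the paper does via the conjugated operator $h\mapsto\mu^{-1/2}\BB(\mu^{1/2}h)$.
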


\begin{rem}
  We have not been able to find the precise form of these
  regularisation estimates in the literature, however regularisation
  estimates for kinetic Fokker-Planck equations are well-known, see
  for instance \cite{Herau2007}, \cite[Appendix~A.21.2]{MR2562709} on
  the analysis side and \cite{MR2673982,MR2917400} on the probability
  side.
\end{rem}

\begin{proof}[Proof of Lemma~\ref{lem:reg-kfp}.] We only sketch the
  proof which is similar to the arguments developed in
  \cite{Herau2007}, see also \cite[A.21.2 Variants]{MR2562709}, and in
  Lemma~\ref{lem:FP-RegBB}.

  \smallskip\noindent {\sl Step 1. Proof of inequality {\bf (1)}.}  We
  only prove the case $k=1$, higher exponents $k$ are obtained by
  differentiating the equation and applying the same argument. We
  write down the energy estimates for the solution $f$, its first
  derivatives, and the product of the first derivatives
\begin{align*}
  \dt \n{f}_{L^2(\mu^{-1/2})} ^2 \le& - \int_{\T^d \times \R^d} \left|
    \nabla_v (f/\mu) \right|^2 \mu \dd x \dd v \\
  \dt \n{\partial_{x_i} f}_{L^2(\mu^{-1/2})} ^2 \le& - \int_{\T^d \times \R^d}
  \left|
    \nabla_v (\partial_{x_i} f/\mu) \right|^2 \mu \dd x \dd v \\
  \dt \n{\partial_{v_i} f}_{L^2(\mu^{-1/2})} ^2 \le& - \int_{\T^d \times \R^d}
  \left| \nabla_v (\partial_{v_i} f/\mu) \right|^2 \mu \dd x \dd v \\ 
  & -  \int_{\T^d \times \R^d} \partial_{v_i} f \partial_{x_i} f \mu^{-1} \dd x \dd
  v + \int_{\T^d \times \R^d} \left| \partial_{v_i} f\right|^2 \mu^{-1}
  \dd x \dd v \\ 
& + \frac{M}2 \int_{\T^d \times \R^d} \left|\partial^2_{v_i} \chi_R \right|
|f|^2 \mu^{-1}   \dd x \dd v\\
  \dt \int_{\T^d \times \R^d} \partial_{x_i} f \partial_{v_i} f \mu^{-1} \dd x
  \dd v \le& - \int_{\T^d \times \R^d}
  \left| \nabla_x f \right|^2 \mu^{-1} \dd x \dd v \\ 
  & - 2 \int_{\T^d \times \R^d} \nabla_v \left( \partial_{v_i} f /\mu
  \right) \cdot \nabla_v \left( \partial_{x_i} f / \mu \right) \mu \dd
  x \dd v \\ 
  & + 2 M  \int_{\T^d \times \R^d} \chi_R \partial_x f \partial_v f
  \mu^{-1} \dd x \dd v \\ & + M  \int_{\T^d \times \R^d}
  \left| \partial_{v_i} \chi_R \right| |f| |\partial_x f|  \mu^{-1} \dd x \dd v.
\end{align*}
Observe also that 
\begin{equation*}
  \int_{\T^d \times \R^d}
  \left| \nabla_v (g/\mu) \right|^2 \mu \dd x \dd v = \int_{\T^d \times \R^d}
  \left| \nabla_v g \right|^2 \mu^{-1} \dd x \dd v + \int_{\T^d \times \R^d}
  |g|^2 \left( \frac{|v|^2}{2} - d \right) \mu^{-1} \dd x \dd v.
\end{equation*}
Define the energy functional 
\begin{multline*}
  \FF(t,f_t) := A \| f_t \|_{L^2(\mu^{-1/2})}^2 + a t \| \nabla_{v} f_t  \|_{L^2(\mu^{-1/2})}^2 
   \\  + 2 c t^{2} \langle \nabla_{v} f_t,  \nabla_{x} f_t
  \rangle_{L^2(\mu^{-1/2})} +  b t^3 \| \nabla_{x} f_t  \|_{L^2(\mu^{-1/2})}^2
\end{multline*}
with $a,b,c >0$, $c < \sqrt{ab}$ (positive definite) and $A$ large enough, and compute from above 
\begin{multline*}
  \dt \FF(t,f_t) \le - A\int_{\T^d \times \R^d} \left| \nabla_v
    (f_t/\mu) \right|^2 \mu \dd x \dd v +
  a  \| \nabla_{v} f_t  \|_{L^2(\mu^{-1/2})}^2 
   \\ + 4 c t \langle \nabla_{v} f_t,  \nabla_{x} f_t
  \rangle_{L^2(\mu^{-1/2})} +  3 b t^2 \| \nabla_{x} f_t  \|_{L^2(\mu^{-1/2})}^2
 \\ - b t^3 \sum_{i=1}^d
  \int_{\T^d \times \R^d}
  \left|\nabla_v (\partial_{x_i} f/\mu) \right|^2 \mu \dd x \dd v
  - a t  \sum_{i=1}^d \int_{\T^d \times \R^d} \left|\nabla_v
    (\partial_{v_i} f/\mu) \right|^2 \mu \dd x \dd v \\
  - at \sum_{i=1} ^d \int_{\T^d \times \R^d} \partial_{v_i} f \partial_{x_i} f \mu^{-1} \dd x \dd
  v + at \int_{\T^d \times \R^d} \left| \nabla_{v} f\right|^2 \mu^{-1}
  \dd x \dd v \\ + \frac{a t M}2 \sum_{i=1} ^d \int_{\T^d \times \R^d} \left|\partial^2_{v_i} \chi_R \right|
|f|^2 \mu^{-1}
  \dd x \dd v  - 2 c t^2
  \int_{\T^d \times \R^d} \left| \nabla_x f \right|^2 \mu^{-1} \dd x
  \dd v \\ - 4 c t^2 \sum_{i=1} ^d \int_{\T^d \times \R^d} \nabla_v \left( \partial_{v_i} f /\mu
  \right) \cdot \nabla_v \left( \partial_{x_i} f / \mu \right) \mu \dd
  x \dd v \\ + 4 c M t^2 \sum_{i=1} ^d \int_{\T^d \times \R^d} \chi_R \partial_{x_i} f \partial_{v_i} f
  \mu^{-1} \dd x \dd v   + 2 c M t^2 \sum_{i=1} ^d \int_{\T^d \times \R^d}
  \left| \partial_{v_i} \chi_R \right| |f| |\partial_x f|  \mu^{-1} \dd x \dd v.
\end{multline*}
which implies when the compatible conditions $c < \sqrt{ab}$, $2c >
3b$ and $A >> a,b,c, M$ are satisfied: 
\begin{equation*}
  \dt \FF(t,f) \le - K \left( \| \nabla_{v} f_t  \|_{L^2(\mu^{-1/2})}^2 
  + t^2 \| \nabla_{x} f_t \|_{L^2(\mu^{-1/2})}^2\right)  + C \int_{\T^d \times \R^d} f^2 \mu^{-1} \dd x \dd v
\end{equation*}
for some constants $K,C>0$. Since the $L^2(\mu^{-1/2})$ norm is decreasing over $t
\in [0,1]$ we deduce that
\begin{equation*}
 \forall \, t \in [0,1], \quad  \FF(t,f_t) \le \FF(0,f_0) + C
 \n{f_0}_{L^2(\mu^{-1/2})} \lesssim \FF(0,f_0)
\end{equation*}
which yields the first part of {\bf (1)} by simple iteration of this
gain. 

For the second part of (1) we first establish in a similar manner as
above 
\begin{equation*}
  \left\| \mathscr S_{\BB^*}(t) f \right\|_{H^k(\mu^{-1/2})} \lesssim \frac{1}{t^{3k/2}} \, \| f\|_{L^2(\mu^{-1/2})}
\end{equation*}
which means 
\begin{equation*}
  \left\| \mathscr S_{\mu^{-1/2} \BB^* (\mu^{1/2} \cdot)}(t) h \right\|_{H^k} \lesssim \frac{1}{t^{3k/2}} \, \| h\|_{L^2}
\end{equation*}
and by duality 
\begin{equation*}
  \left\| \mathscr S_{\mu^{-1/2} \BB (\mu^{1/2} \cdot)}(t) h \right\|_{L^2} \lesssim \frac{1}{t^{3k/2}} \, \| h\|_{H^{-k}}
\end{equation*}
which means (according to our definition of weighted dual spaces)
\begin{equation*}
  \left\| \mathscr S_{\BB}(t) f \right\|_{L^2(\mu^{-1/2})} \lesssim \frac{1}{t^{3k/2}} \, \| f\|_{H^{-k}(\mu^{-1/2})}.
\end{equation*}

\smallskip\noindent {\sl Proof of inequality {\bf (2)}. }  Since the
norms we consider are propagated by the flow it is no loss of generality
to reduce to $t \in [0,\eta]$, $0<\eta <<1$. We introduce the quantity
\begin{eqnarray*}
\GG(t,f) &:=& 
B \| f \|_{L^1(\mu^{-1/2})}^2  + t^Z \bar \FF(t,f_t) \\ 
\bar \FF(t,f_t) &:=& \Big( A \| f \|_{L^2(\mu^{-1/2})} ^2 
+ a t^2 \| \nabla_{v} f  \|_{L^2(\mu^{-1/2})}^2 \\
&&\qquad
+ 2 c t^4 \langle \nabla_{x} f,  \nabla_{v} f
\rangle_{L^2(\mu^{-1/2})} + b t^6 \| \nabla_{x} f
\|_{L^2(\mu^{-1/2})}^2 \Big)
\end{eqnarray*}
with $B >> A >>a,b,c$ and $c < \sqrt{ab}$ and $Z = (d+3)/2$.

A similar calculation as above yields, for well-chosen $A,a,b,c>0$: 
\begin{equation*}
  \dt \bar \FF(t,f_t) \le - K \left( \| \nabla_{v} f_t  \|_{L^2(\mu^{-1/2})}^2 
  + t^4 \| \nabla_{x} f_t \|_{L^2(\mu^{-1/2})}^2\right)  + C \int_{\T^d \times \R^d} f^2 \mu^{-1} \dd x \dd v
\end{equation*}
and we deduce 
\begin{multline*}
  \dt \GG(t,f) \le \frac{d B}2 \| f \|_{L^1(\mu^{-1/2})}^2 + Z t^{Z-1}
  \bar \FF(t,f_t) \\ - K t^Z \left( \| \nabla_{v} f_t
    \|_{L^2(\mu^{-1/2})}^2 
  + t^4 \| \nabla_{x} f_t \|_{L^2(\mu^{-1/2})}^2\right) + C t^Z \int_{\T^d \times \R^d} f^2 \mu^{-1} \dd x \dd v.
\end{multline*}
We choose $\eta$ small enough so that $Z t^{Z+1} << K t^Z$, and deduce 
\begin{multline*}
  \dt \GG(t,f) \le \frac{d B}2 \| f \|_{L^1(\mu^{-1/2})}^2 - \frac{K}2 t^Z \left( \| \nabla_{v} f_t
    \|_{L^2(\mu^{-1/2})}^2 
  + t^4 \| \nabla_{x} f_t \|_{L^2(\mu^{-1/2})}^2\right) \\ + C' t^{Z-1} \int_{\T^d \times \R^d} f^2 \mu^{-1} \dd x \dd v.
\end{multline*}
for some other constant $C'>0$. 

 The Nash inequality implies
\begin{equation}\label{eq:Nash}
  \int_{\T^d \times \R^d} f^2 \mu^{-1} \dd x \dd v \lesssim_d
  \Bigl( \int_{\T^d \times \R^d} |f| \mu^{-1/2} \dd x \dd v
  \Bigr)^{4 \over 2d + 2}\, \Bigl( \int_{\T^d \times \R^d} |
  \nabla_{x,v}(f \mu^{-1/2}) |^2  \dd x \dd v 
  \Bigr)^{2d \over 2d + 2}
\end{equation}
and using the Young inequality we have 
$$
\| f \|^2_{L^2(\mu^{-1/2})}  \le C_{\var,d} t^{-5d} \,  \| f
\|^2_{L^1(\mu^{-1/2})} +  \var t^5 \, \| \nabla_{x,v} f \|^2_{L^2(\mu^{-1/2})} ,
$$
for $\var$ small and $C_{\var,d}$ depending on $\var$ and the
dimension $d$. Taking $\var$ small we deduce 
\begin{equation*}
  \dt \GG(t,f) \le \frac{d B}2 \| f \|_{L^1(\mu^{-1/2})}^2 + C''
  t^{Z-1 - 5d} \| f \|_{L^1(\mu^{-1/2})}^2
\end{equation*}
for some constant $C''>0$. Finally choosing $Z = 5d+1$ we conclude
that 
\begin{equation*}
 \forall \, t \in [0,\eta], \quad  \GG(t,f_t) \le \GG(0,f_0) + C
 \n{f_0}_{L^1(\mu^{-1/2})} ^2 \lesssim \GG(0,f_0)
\end{equation*}
which yields the first part of {\bf (2)}. The second part can be
proved either by duality, or by using the inequality {\bf (1)} with
$k=d$ and Sobolev embedding (the constant is then slightly better:
$t^{-3d/2}$ which has no consequence for the rest of the paper). 

\smallskip\noindent {\sl Proof of inequality {\bf (3)}.} The proof of
the first part is similar to the proof of the first part of inequality
{\bf (2)} after differentiating the equation to get 
\begin{align*}
  & \partial_t \partial_{x_i} f + v \cdot \nabla_x \partial_{x_i} f =
  \nabla_v \cdot \left( \nabla_v \partial_{x_i} f + v \partial_{x_i} f
  \right) \\ 
  & \partial_t \partial_{v_i} f + v \cdot \nabla_x \partial_{v_i} f =
  \nabla_v \cdot \left( \nabla_v \partial_{v_i} f + v \partial_{v_i} f
  \right) - \partial_{x_i} f + \partial_{v_i} f
\end{align*}
(observe that it involves no term of order zero derivative). The
second part is proved by applying inequality {\bf (1)} to the
differentiated equation for $k=d$ together with Sobolev embedding.
 
\smallskip\noindent {\sl Proof of inequality {\bf (4)}.}  It follows
from {\bf (3)} by duality. 
\end{proof}

\begin{cor}\label{cor:Tn:FPtorus}
  For any $a > a_0$, there exist $n \ge 1$ and a constant such that
  for any spaces $E$ and $\EE$ of the type $ W^{\sigma,p}(m)$ as
  defined above, there holds
\begin{equation}\label{eq:TnEtoEE}
  \forall \, t \ge 0, \quad \| T_{n}(t) f \|_{E} \lesssim  e^{at}  \, \| f \|_{\EE} .
\end{equation}
\end{cor}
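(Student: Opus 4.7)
The plan is to deduce this from the practical criterion provided by Lemma~\ref{lem:Tn}. Conditions {\bf (a)} and {\bf (b)} of that lemma have already been checked in the previous subsection: the hypodissipativity of $\BB-a$ on any $\EE$ or $E$ of the form $W^{\sigma,p}(m)$ (for $a>a_\sigma(p,m)$, after choosing $R,M$ large enough depending on the pair $(\EE,E)$) comes from Lemmas~\ref{lem:FPhomo-BLp}, \ref{lem:FPhomo-BW1p} and \ref{lem:FPhomo-BW-1}; and the boundedness of $\AA=M\chi_R$ follows from the smoothness and compact support of $\chi_R$. The entire content of the corollary is therefore reduced to verifying the regularization-type condition {\bf (c)} of Lemma~\ref{lem:Tn} for the specific pair $(\EE,E)$: that $\AA\mathscr S_\BB(t)$ and $\mathscr S_\BB(t)\AA$ are bounded from $\EE$ to $E$ with at most a polynomial singularity $t^{-\Theta}$ at the origin.

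The key point in checking {\bf (c)} is that $\AA$ acts as multiplication by a function compactly supported in $v$, so it trades any admissible weight for any other: for any two admissible $(m,m')$ and any $\sigma\in\{-1,0,1\}$, $p\in[1,\infty]$, one has
\[
\|\AA g\|_{W^{\sigma,p}(m')} \le C_{R,M,m,m'}\,\|g\|_{W^{\sigma,p}(m)},
\]
the case $\sigma=-1$ following by duality from the fact that $\chi_R m'/m$ has bounded $W^{1,p'}$ norm thanks to compactness of the support. In particular $\AA$ sends any $W^{\sigma,p}(m)$ continuously into the Gaussian-weighted space $W^{\sigma,p}(\mu^{-1/2})$, which is the natural setting for the regularization estimates of Lemma~\ref{lem:reg-kfp}.

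Concretely, I would chain $(\AA\mathscr S_\BB)$ through a finite sequence of intermediate weighted Sobolev spaces, starting from $\EE=W^{\sigma,p}(m)$ and ending at a space continuously embedded into $E$. A typical chain (for the worst case $W^{-1,1}(m)\to W^{1,\infty}(m')$) goes
\[
W^{-1,1}(m)\xrightarrow{\AA} W^{-1,1}(\mu^{-1/2})\xrightarrow{\mathscr S_\BB} W^{-1,2}(\mu^{-1/2})\xrightarrow{\mathscr S_\BB} L^{2}(\mu^{-1/2})\xrightarrow{\mathscr S_\BB} H^{k}(\mu^{-1/2})\hookrightarrow W^{1,\infty}(\mu^{-1/2}),
\]
where each $\mathscr S_\BB$ step uses one of the four parts of Lemma~\ref{lem:reg-kfp} (gain of integrability in negative Sobolev, gain of one derivative from $H^{-1}$ to $L^2$, gain of $k$ derivatives with $k>d/2$) and each carries a polynomial factor $t_i^{-\Theta_i}$. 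Composing these mappings with intercalated $\AA$'s to bring the weight back to $\mu^{-1/2}$ at each step, one obtains $\AA\mathscr S_\BB(t)$ as a bounded operator from $\EE$ to $E$ with a power-law singularity $t^{-\Theta}$ of some finite order; the estimate for $\mathscr S_\BB(t)\AA$ is obtained analogously.

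With condition {\bf (c)} established, Lemma~\ref{lem:Tn} directly yields the conclusion: given $a>a_0\ge a_\sigma(p,m)$, one may fix any $a'\in(a_0,a)$, apply the lemma to obtain an integer $n\ge 1$ and $C_{a}\ge 1$ such that
\[
\|T_n(t)f\|_E+\|(\mathscr S_\BB\AA)^{(*n)}(t)f\|_E\le C_{a}\,e^{a t}\,\|f\|_\EE \quad\forall\,t\ge 0.
\]
The main technical obstacle I foresee is the careful bookkeeping of dependencies of the constants $R$, $M$ and of the number $n$ on the pair $(\EE,E)$—one must choose $R,M$ large enough to enforce the hypodissipativity threshold simultaneously on both spaces, and check that the interpolation step in the proof of Lemma~\ref{lem:Tn} absorbs the (possibly large) polynomial singularity $\Theta$ without degrading the exponential rate below $a$.
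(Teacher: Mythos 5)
Your proposal is correct and follows essentially the same route as the paper: the paper's proof is a one-liner invoking Lemma~\ref{lem:Tn} together with Lemma~\ref{lem:reg-kfp}, the key observation being exactly yours, namely that composing with the compactly supported multiplication $\AA$ trades any admissible weight for the Gaussian one, so that $\AA \mathscr S_\BB(t)\AA$ maps any $W^{\sigma,p}(m)$ into $H^d(\mu^{-1/2})\hookrightarrow E$ with a $C\,t^{-\Theta}$ singularity. The only cosmetic remark is that the chain you display (with $\AA$ applied first) is the one relevant to $\mathscr S_\BB(t)\AA$ rather than $\AA\mathscr S_\BB(t)$, but since condition {\bf (c)} of Lemma~\ref{lem:Tn} requires both and each is obtained from the other pattern by inserting one more harmless factor of $\AA$, this does not affect the argument.
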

\begin{proof}[Proof of Corollary~\ref{cor:Tn:FPtorus}]
  The proof follows from the application of Lemma~\ref{lem:Tn} and
  Lemma~\ref{lem:reg-kfp} that implies that $\mathcal A \mathscr
  S_{\mathcal B}(t) \mathcal A$ maps any $W^{\sigma,p}(m)$ to
  $H^d(\mu^{-1/2})$ with some constant $C t^{-\Theta}$ with some
  $\Theta >0$. 
\end{proof}

\subsection{End of the proof of Theorem~\ref{theo:vFP}}

In the cases $\sigma \ge 0$, $1\le p < \infty$ and $\sigma = -1$, $1<
p \le \infty$ estimate \eqref{eq:theovFP} is an immediate consequence
of Theorem~\ref{theo:Extension} together with
Lemma~\ref{lem:PoincareInegalite}, Lemma~\ref{theo:KFPtorus-gapH1},
Lemma~\ref{lem:FPhomo-BLp}, Lemma~\ref{lem:FPhomo-BW1p},
Lemma~\ref{lem:FPhomo-BW-1}, Lemma~\ref{lem:FP-RegBB} and
Lemma~\ref{lem:Tn}.

\smallskip In the case $\sigma = 0$, $p= \infty$ so that $L^\infty(m)$
is not dense in $L^2(\mu^{-1/2})$ (for any choice of the weight $m$), we
remark that for any $\eps >0$ (small enough) there exists $p_{\eps}$
and $m_\var$ so that $L^\infty(m) \subset L^p(m_\eps)$ for any $p \ge
p_\eps$, so that estimate \eqref{eq:theovFP} holds in $L^p(m_\eps)$,
then in $L^\infty(m_\eps)$ by passing to the limit $p\to\infty$ and
finally in $L^\infty(m)$ by passing to the limit $\eps\to0$. We handle
the two last cases in \eqref{eq:theovFP} in a similar way.

\smallskip In order to prove \eqref{eq:theovFPW1}, we first observe
that combining Theorem~\ref{theo:Extension} together with
Lemma~\ref{lem:PoincareInegalite}, Lemma~\ref{theo:KFPtorus-gapH1},
Lemma~\ref{lem:FPhomo-BLipPrime} and Lemma~\ref{lem:Tn}, we have
established
$$
\| \mathscr S_{\LL}(t) f_{0} - \mathscr S_{\LL}(t) g_0 \|_{
  (\FF_\infty)' } \le C_{a} \, e^{at} \, \| f_{0} - g_0 \|_{
  (\FF_\infty)' }.
$$
Next, for any two probability measures $f,g$ with bounded first
moment, we have \begin{eqnarray*} W_1(f,g) &=& \sup_{\|\nabla \phi
    \|_{L^\infty} \le 1} \int_{\R^d} (f-g) \, \phi \dd v
  \\
  &=& \sup_{\|\nabla \phi \|_{L^\infty} \le 1} \int_{\R^d} (f-g) \,
  (\phi - \phi(0)) \dd v
  \\
  &=& \sup_{\max\{\|\langle v \rangle^{-1} \, \psi
    \|_{L^\infty} , \|\nabla \psi \|_{L^\infty} \} \le 1} \int_{\R^d}
  (f-g) \, \psi \dd v,
\end{eqnarray*} where we have used the Kantorovich-Rubinstein theorem (see for
instance \cite[Theorem 1.14]{VillaniTOT}) in the first line, the mass
condition in the second line and the change of test functions $\psi :=
\phi - \phi(0)$ on the last line. As a consequence the $W_1$ distance
and the distance associated to the duality norm $\|\cdot
\|_{(\FF_\infty)'}$ are equivalent, which ends the proof.

\section{The kinetic Fokker-Planck equation with potential confinement}
\label{sec:KFPrd}
\setcounter{equation}{0}
\setcounter{theo}{0}

\subsection{Main result }
\label{sec:KFPtorus-MainR}
Consider the kinetic Fokker-Planck equation in the whole space with a space confinement potential 
\begin{equation}\label{eq:KFPrd}
  \begin{cases}
    \partial_t f = \LL f := \CC f + \TT f,
    \\[2mm]
    \CC f := \nabla_v \cdot \left( \nabla_v f + v \, f\right), \\[2mm]
    \TT f := - v \cdot \nabla_x f + G \cdot \nabla_{v}f,
  \end{cases}
\end{equation}
on the density $f=f(t,x,v)$, $t \ge 0$, $x \in \R^d$,  $v \in \R^d$, where the  (exterior) force
field $G = G(x) \in \R^d$ is given by \
\begin{equation}\label{def:forceFx} 
  G(x) = \nabla_x \Psi(x) := x \, \langle x
  \rangle^{\beta-2} \quad \mbox{ with } \quad \fa |x| \ge R_1, \ 
  \Psi (x) := {1 \over \beta} \,  \langle x \rangle^\beta + \Psi_0
\end{equation}
for some constants $R_1>0$ and $\beta \ge 1$. 

The unique stationary state of the kinetic Fokker-Planck equation
\eqref{eq:KFPrd} is 
 $$
 \mu(x,v)= \exp ( - \Psi(x) - |v|^2/2 ) , 
$$
with the choice of the constant $\Psi_{0} \in \R$ so that $\mu$ is a
probability measure.

\smallskip 
We define the Hamiltonian function
 $$
  H(x,v) := 1+\Psi(x) + {|v|^2 \over 2},
 $$
 and we consider the following assumptions: 

\medskip

\begin{center}
{\bf Assumptions on the functional spaces} 
\end{center} 
\smallskip

\noindent {\em \underline{Polynomial weights}}: For $\beta \ge 2$ and
  $p \in [1,+\infty]$, we introduce the weight functions $m := H^k$ with $k >
  k(d,p)$ for some explicit $k(d,p)>d/p'$ from the proofs.
\smallskip

\noindent {\em \underline{Stretched exponential weights}}: For any
  $\beta \ge 1$ and $p \in [1,+\infty]$, we introduce the weight
  functions $m : = e^{\kappa \, H^s}$ with $s \in (0, 1)$ and
  $\kappa>0$.\smallskip

\noindent {\em \underline{Definition of the spaces}}: We then define on
  $\R^d \times \R^d$ the associated weighted Lebesgue spaces
  $\EE := L^p(m)$, 
  $p \in [1,+\infty]$.

\smallskip
For any $f \in \EE$, the terms $\langle f\rangle$,
$\langle\!\langle f \rangle\!\rangle$ and $\Pi_1^\perp f$ are defined
as before. 


\begin{theo}\label{theo:KFPrdhyp1} 
  Consider one of the spaces $\EE$ defined above. Then there exists
  $a = a(\EE) < 0$ such that for any $f_0, g_0 \in \EE$ with same
  mass, the associated solutions $f_t$, $g_t$ of the kinetic
  Fokker-Planck equation \eqref{eq:KFPrd} satisfy
  \begin{equation*} \label{eq:theoKFPrdhyp1} 
    \|f_t - g_t \|_\EE \le
    C_{a} \, e^{at} \, \|f_0 - g_0 \|_\EE,
\end{equation*}
which implies the relaxation to equilibrium
\begin{equation*} \label{eq:theoKFPrdhyp1}
\|f_t - \langle\!\langle f_0 \rangle\!\rangle \,  \mu  \|_\EE
\le C_{a} \, e^{at} \, \|f_0 - \langle\!\langle f_0 \rangle\!\rangle \mu  \|_\EE, 
\end{equation*}
for some constructive constant $C_{a}>0$. 
\end{theo}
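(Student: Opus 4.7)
The strategy mirrors that of Theorem~\ref{theo:vFP}: we aim to apply the abstract extension/shrinkage Theorem~\ref{theo:Extension} with the baseline space $E = H^1(\mu^{-1/2})$, in which the hypocoercivity result of H\'erau--Villani and Dolbeault--Mouhot--Schmeiser \cite{MR2562709,DMS} already yields the exponential decay \eqref{eq:estimSGL}; here $\mu(x,v) = \exp(-\Psi(x) - |v|^2/2)$ and the confinement assumption $\Psi \sim |x|^\beta/\beta$ with $\beta \ge 1$ is precisely what those results require. The goal is to transfer this decay to the larger space $\EE = L^p(m)$ with $m = H^k$ or $m = e^{\kappa H^s}$.

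The first step is to decompose $\LL = \AA + \BB$ with $\AA = M \chi_R$ a multiplication by a smooth cutoff of the set $\{H \le R\}$ in phase space and $\BB = \LL - M \chi_R$, tuning $M,R$ large. The key and most delicate point, as for Theorem~\ref{theo:vFP} but considerably harder, is then to establish hypodissipativity of $(\BB - a)$ in $\EE = L^p(m)$ for some $a < 0$. The obstruction, absent in the periodic case, is that $m$ must now depend on both $x$ and $v$ in order to confine in position; the transport part $-v \cdot \nabla_x + \nabla_x \Psi \cdot \nabla_v$ preserves the Hamiltonian $H$ exactly (Poisson bracket $\{H,\cdot\}$), so choosing $m$ as a function of $H$ cancels the transport contribution to the energy identity. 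What remains is the Ornstein--Uhlenbeck part $\nabla_v \cdot (\nabla_v f + v f)$, for which the computation parallels the one in Lemma~\ref{lem:FPhomo-BLp} but with the weight $m=m(H)$. Carrying out the $L^p$ energy identity yields an analogue of \eqref{eq:FPhomo-BLp} with a scalar symbol
\[
 \psi_{m,p}(x,v) = \Bigl(\tfrac{2}{p}-1\Bigr)\tfrac{\Delta_v m}{m} + 2\Bigl(1-\tfrac{1}{p}\Bigr)\tfrac{|\nabla_v m|^2}{m^2} + \Bigl(1 - \tfrac{1}{p}\Bigr) d - v \cdot \tfrac{\nabla_v m}{m} - M\chi_R,
\]
and the main technical issue is to show that the dominant term $-v\cdot \nabla_v m/m$ (coming from the drift $v\cdot \nabla_v$) controls all the others when $R,M$ are large enough and $k$ (resp.\ $\kappa,s$) are chosen as in the hypotheses. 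This is where the ``weight multipliers inspired from commutator conditions'' mentioned in the introduction will be needed: because $m$ is a function of $H$, one has $v\cdot \nabla_v m = H'(m)\, |v|^2$, which is the genuinely dissipative piece; quantitatively it has to dominate the terms involving derivatives of $m$ in the $v$-variable.

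The second step is the regularization estimate: by a hypoelliptic/H\'erau--Villani twisted-energy argument identical in spirit to Lemma~\ref{lem:reg-kfp} (using that $[v\cdot\nabla_x, \nabla_v] = -\nabla_x$ and that the $x$-localization of $\AA$ makes the various $x$-derivatives controlled), one obtains for $t \in (0,1]$ a bound of the form
\[
 \| \AA \mathscr S_\BB(t) \|_{\BBB(\EE, E)} + \| \mathscr S_\BB(t) \AA \|_{\BBB(\EE, E)} \lesssim e^{b t}\, t^{-\Theta},
\]
with some $\Theta, b$. Combining this short-time regularisation with the exponential decay in $\EE$ from Step~1 and feeding it through Lemma~\ref{lem:Tn} yields an $n \ge 1$ such that $(\AA \mathscr S_\BB)^{(*n)}(t)$ and $(\mathscr S_\BB \AA)^{(*n)}(t)$ map $\EE \to E$ with exponential decay rate strictly better than $a$, verifying the semigroup commutator assumption {\bf (iii)} of Theorem~\ref{theo:Extension}. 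The last step is cosmetic: Theorem~\ref{theo:Extension} (in its shrinkage-implies-enlargement direction here is actually the enlargement direction, as $E \subset \EE$) then upgrades the $H^1(\mu^{-1/2})$ decay to the claimed decay in $\EE$. The limiting cases $p = \infty$ are handled exactly as in the proof of Theorem~\ref{theo:vFP}, by first proving the estimate in $L^p(m_\eps)$ for finite $p$ and a slightly stronger weight $m_\eps$, then letting $p \to \infty$ and $\eps \to 0$.

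The main obstacle is unambiguously Step~1: finding an admissible weight $m = m(H)$ and verifying that the contribution of $-v \cdot \nabla_v m /m$ dominates the $O(|\nabla_v m|^2/m^2)$ and $O(\Delta_v m/m)$ corrections, with explicit thresholds on $k$, $\kappa$, $s$ depending on $p$ and $\beta$. Once this pointwise inequality $\psi_{m,p} \le a < 0$ at infinity is established, everything else follows the template already deployed in Section~\ref{sec:FPhomo}.
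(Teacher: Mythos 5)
Your overall architecture is the right one and matches the paper's: baseline decay in $E = H^1(\mu^{-1/2})$ from \cite{MR2562709}, the splitting $\AA = M\chi_R$, $\BB = \LL - M\chi_R$ with a cutoff in the Hamiltonian, dissipativity of $\BB - a$ in $L^p(m)$, hypoelliptic regularisation of $\mathscr S_\BB$, and the conclusion via Lemma~\ref{lem:Tn} and Theorem~\ref{theo:Extension}. The gap is in the step you correctly single out as the hardest, Step~1, and it is not merely technical: your plan there would fail as stated.

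You propose to take $m = m(H)$ and to verify the pointwise inequality $\psi_{m,p} \le a < 0$ at infinity, arguing that the ``genuinely dissipative piece'' $-v\cdot\nabla_v m/m = -m'(H)|v|^2/m$ dominates the remaining terms. But this term degenerates on the unbounded region $\{|v| \lesssim 1,\ |x| \to \infty\}$: there $H$ is large because of $\Psi(x)$, yet $|v|^2/H \to 0$, so $\psi_{m,p} \to d/p' \ge 0$, and $M\chi_R$ is of no help since that region lies outside $\{H \le R\}$. No choice of $k$, $\kappa$, $s$, $M$, $R$ repairs this, because the transport part $\TT$ contributes exactly zero to the energy identity when the weight is a function of $H$ alone --- which is precisely the price of the cancellation you invoke. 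The paper's resolution is to perturb the weight: it uses $W = m\,w$ with $w = 1 + \frac12 (x\cdot v)/H_\alpha$ and $H_\alpha = 1 + \alpha\langle x\rangle^\beta/\beta + |v|^2/(2\alpha)$, which is comparable to $m$ but is \emph{not} a function of $H$; the transport operator acting on $w$ then produces a term $-\tfrac14 \langle x\rangle^\beta/H_\alpha$ in the symbol, and a dichotomy ($|v|^2/2 \ge H/3$, where the $-k|v|^2/H$ term is active, versus $|v|^2/2 \le H/3$, where $\langle x\rangle^\beta/\beta \ge H/3$ and the new term is active) shows that the symbol is bounded above by a negative constant outside a compact set. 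This $x\cdot v$ corrector is what the introduction means by ``weight multipliers inspired from commutator conditions''; identifying it (or an equivalent device coupling $x$ and $v$ in the weight) is the missing idea, and without it the claimed dissipativity of $\BB$ in $L^p(m)$ does not follow from your argument. The rest of your plan (regularisation with the caveat of extra $H^\ell$ weights in the whole-space setting, Lemma~\ref{lem:Tn}, the limiting cases $p=\infty$) is consistent with the paper.
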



\begin{rems}
  \begin{enumerate}
  \item Such a semigroup spectral gap result for the kinetic
    Fokker-Planck equation in the whole space with a confining
    potential (and the same harmonic potential for the friction force
    acting on velocities) has been proved in the Sobolev spaces
    $H^\sigma(\mu^{-1/2})$, $\sigma \in \N^*$ in
    \cite{Herau_Boltz,MR2034753,MR2562709} and in the Lebesgue space
    $L^2(\mu^{-1/2})$ in \cite{DMScras,DMS} (inspiring from
    \cite{Herau_Boltz}). These last references provide also
    constructive estimates. 
  
  
  \item We did not include it in the statement for the sake of clarity
    but our method of proof can recover the semigroup decay estimate
    in $L^2(\mu^{-1/2})$ as a consequence of the known decay estimate
    in $H^1(\mu^{-1/2})$ proved in
    \cite{Herau_Boltz,MR2034753,MR2562709}, which provides an
    alternative argument to those in \cite{DMScras,DMS}.

  \item In the case of a polynomial weight, the sufficient condition
    $k > k(d,p)$ can be more precise: in our calculation we find
    $k(d,p) > d/p' + 3/2$. 

  \item We believe that, by combining the new estimates in this
    section with the strategy of the previous section,
    Theorem~\ref{theo:KFPrdhyp1} can be extended to Sobolev space
    $W^{\sigma,p}(m)$ for $\sigma = \pm 1$ and a polynomial weight
    with a condition $k>k(d,p)$ for some $k(d,p) > d/p' + 3/2$ greater
    or equal to the one in (3).

  \item We are not able at now to prove the exponential decay in
    Wasserstein distance, as for the torus case. The reason is that
    the minimal polynomial confinement (the condition on $k$) we can
    afford in the theorem is too strong for working in the space of
    probability measures with first moment bounded. More precisely,
    the condition on $k$ in the $(L^\infty_k \cap \dot W^{1,\infty})'$
    decay estimate is no better than the one discussed above for
    $p=1$, i.e. $k > 3/2$, and $k=1$ is required for this dual norm to
    be equivalent to the $W_1$ distance. 

 
  \end{enumerate}
\end{rems}

The strategy of the proof follows the same structure as in the
previous section, and we start from the following $H^1$ spectral gap
estimate that has been established in \cite{MR2562709} for potentials
$\Psi$ under our assumptions, with constructive proof.  See also
\cite{MR1787105,MR2034753,MR2130405} for previous results in that
direction.
\begin{theo} {\bf (\cite[Theorem
    35]{MR2562709})} \label{lem:KFPpotential-gapH1} The result in
  Theorem~\ref{theo:KFPrdhyp1} is true in the Hilbert space
  $H^1(\mu^{-1/2})$, and satisfy quantitative hypodissipativity
  estimate for the equivalent norm
$$
\left( \|f \|_{L^2( \mu^{-1/2})}^2 + a \|\nabla_x f
  \|_{L^2( \mu^{-1/2})}^2 + b\|\nabla_v f \|_{L^2( \mu^{-1/2})}^2
 + 2c \langle \nabla_x f, \nabla_v f \rangle_{L^2( \mu^{-1/2})} \right)^{1/2}
$$
for appropriate choice of $a,b,c>0$ with $c < \sqrt{ab}$. 
\end{theo}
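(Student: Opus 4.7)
The plan is to apply the extension Theorem~\ref{theo:Extension} with reference space $E = H^1(\mu^{-1/2})$ (in which the spectral gap is provided by Theorem~\ref{lem:KFPpotential-gapH1}) and target space $\EE = L^p(m)$, using the decomposition $\LL = \AA + \BB$ with $\AA := M\chi_R$ a bounded multiplication by a smooth phase-space cutoff times a large constant, and $\BB := \LL - \AA$. The theorem is applied with $k=1$, $\xi_1 = 0$, and spectral projector $\Pi_{1} f = \langle\langle f \rangle\rangle\mu$; once assumptions {\bf (i)}--{\bf (ii)}--{\bf (iii)} are verified, the stated decay follows from item {\bf (4)} of Theorem~\ref{theo:factor:equiv}. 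Assumption {\bf (ii)} is trivial. Assumption {\bf (i)} in the reference space $E$ comes from the hypocoercive estimate of Theorem~\ref{lem:KFPpotential-gapH1}, which provides $\langle \LL f,f^\ast\rangle_N \le -\tilde a_0\|\Pi_1^\perp f\|_N^2$ in the twisted norm $N$; for $M,R$ large enough so that $\chi_R$ essentially equals one on the support of $\mu$, the penalty $-M\chi_R$ destroys the zero-eigenvalue direction and yields full hypodissipativity of $\BB-a$ in $H^1(\mu^{-1/2})$ for some $a<0$.

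The main obstacle is verifying assumption {\bf (i)} in the target space $L^p(m)$. The key structural observation is that for any weight $m=m(H)$ the Hamiltonian transport $\TT$ contributes zero to $\frac{d}{dt}\|f_t m\|_{L^p}^p$ since $\TT H = 0$, so only the Fokker--Planck operator $\CC$ and the multiplier $-M\chi_R$ drive the estimate. A direct integration by parts in the spirit of Lemma~\ref{lem:FPhomo-BLp} yields an effective pointwise weight
\begin{equation*}
\psi_{\CC} = \left(\tfrac{2}{p}-1\right)\frac{\Delta_v m}{m} + 2\left(1-\tfrac{1}{p}\right)\frac{|\nabla_v m|^2}{m^2} + \left(1-\tfrac{1}{p}\right)d - \frac{v\cdot\nabla_v m}{m},
\end{equation*}
which is strongly dissipative at large $|v|$ for the prescribed ranges of $k$ (or $\kappa,s$), but converges to $(1-1/p)d>0$ at $|x|\to\infty$ with bounded $|v|$. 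In other words, the velocity diffusion does not see the $x$ direction at infinity, and no compactly supported $\chi_R$ can restore dissipativity there. To close this gap I would introduce an equivalent norm of the form
\begin{equation*}
\nt{f}^p := \int |fm|^p \dd x \dd v + \eps \int |fm|^p \,\eta(x,v)\,\dd x \dd v,
\end{equation*}
with $\eta$ a bounded hypocoercive-type multiplier coupling $x$ and $v$ (a regularised, bounded version of $\eta \sim \sign(x\cdot v)\,\chi(H/R_0)$), designed so that $\TT \eta$ produces, at leading order for $|x|$ large, a negative contribution proportional to $-\langle x\rangle^\beta$. This is the $L^p$-level analogue of the cross term $\langle \nabla_x f,\nabla_v f\rangle$ in the twisted $H^1$ norm of Theorem~\ref{lem:KFPpotential-gapH1}, and is precisely what the introduction calls ``weight multipliers inspired from commutator conditions on derivatives''. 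After balancing $\eps$ against the anti-dissipative terms produced by the action of $\CC$ on $\eta$ in the large $|v|$ region, and taking $M,R$ large to absorb the compact region, I obtain the hypodissipativity of $\BB-a$ in $\nt{\cdot}$ for all $p\in[1,\infty)$, with the endpoint $p=\infty$ recovered by a uniform-in-$p$ limiting argument.

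Finally, assumption {\bf (iii)} follows from Lemma~\ref{lem:Tn} applied with a two-step chain: (a) an ultracontractive Nash-type gain of integrability $\mathscr S_\BB(t) : L^p(m) \to L^2(\mu^{-1/2})$ with algebraic time singularity (adapting Lemma~\ref{lem:reg-kfp}~{\bf (2)} to the potential setting, where composing with $\AA$ localises the analysis to a bounded phase-space region on which Gaussian and polynomial weights are equivalent); and (b) the H\'erau--Villani quantitative hypoellipticity estimate giving the gain of one derivative $L^2(\mu^{-1/2})\to H^1(\mu^{-1/2})$ with algebraic blow-up (cf.~\cite[Appendix~A.21.2]{MR2562709}). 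Together with {\bf (i)} and {\bf (ii)}, this establishes all hypotheses of Theorem~\ref{theo:Extension}, which then transfers the decay estimate from $H^1(\mu^{-1/2})$ to $L^p(m)$.
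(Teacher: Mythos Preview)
You have proved the wrong statement. The theorem in question, Theorem~\ref{lem:KFPpotential-gapH1}, is the $H^1(\mu^{-1/2})$ hypocoercivity result \emph{cited} from \cite[Theorem~35]{MR2562709}; the paper does not prove it but uses it as the starting point (the role of $E$) in the extension argument. Your proposal explicitly takes this theorem as an input (``the spectral gap is provided by Theorem~\ref{lem:KFPpotential-gapH1}'') and then sketches a proof of Theorem~\ref{theo:KFPrdhyp1} instead. A proof of the statement actually asked for would be a hypocoercivity argument in $H^1(\mu^{-1/2})$ for the twisted norm displayed in the statement --- computing the time derivative of that norm along the flow of $\LL$ and showing the mixed term $\langle \nabla_x f,\nabla_v f\rangle$ yields the missing negative $\|\nabla_x f\|^2$ contribution --- which is precisely Villani's $A^*A+B$ calculation. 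None of that appears in your proposal.

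That said, what you wrote is a reasonable outline for Theorem~\ref{theo:KFPrdhyp1}, and it matches the paper's strategy closely. Your ``hypocoercive multiplier'' $\eta$ in an equivalent $L^p$ norm is exactly the idea the paper implements in Lemma~\ref{lem:FPKrd-BdissipLp}: there the correction is multiplicative, $W = m\,w$ with $w = 1 + \tfrac12 (x\cdot v)/H_\alpha$, but since $w^p \approx 1 + \tfrac{p}{2}(x\cdot v)/H_\alpha$ this is the same mechanism as your additive $\eps\!\int |fm|^p \eta$. The paper's explicit choice makes the balance transparent: $\TT w/w$ produces a term $-\tfrac14 \langle x\rangle^\beta/H_\alpha$ which handles the region $|x|\to\infty$, $|v|$ bounded, exactly the gap you identified. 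Your regularisation step (b) also matches Lemma~\ref{lem:reg-kfp-pot}. So the content is right; it is simply attached to the wrong target.
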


\subsection{Dissipativity property of $\BB$}
\label{subsec:FPKrdB}

We define $\AA$ and $\BB$ as follows:
\begin{equation}\label{eqKFPrddefB}
\AA f := M \chi_R f, \qquad \BB f :=  \CC f + \TT  f - M \chi_R f
\end{equation}
where $M > 0$, $\chi_R(x,v) = \chi(H(x,v)/R)$, $R > 1$, and $0 \le
\chi \in C^\infty_c(\R^d \times \R^d)$ is such that $\chi (x,v) = 1$
for any $|x|^2 + |v|^2 \le 1$.

We start with Lebesgue spaces:
\begin{lem}\label{lem:FPKrd-BdissipLp} 
We have:
\begin{itemize}
\item \emph{(Polynomial weights)} For any $\beta \ge 2$,
  $p \in [1,+\infty]$ and $k > k(d,p)$ for some $k(d,p)>d/p'$ from the
  proof, there is $a <0$ such that the operator $\BB-a$ is dissipative
  in the space $L^p(H^{k})$.

\item \emph{(Exponential weights)} For any $\beta \ge 1$,
  $p \in [1,\infty]$ and $s \in (0,1]$ (with the extra condition
  $\kappa <1$ in the case $s=1$), there is $a <0$ such that the
  operator $\BB-a$ is dissipative in the space $L^p(e^{\kappa H^s})$.
\end{itemize}
\end{lem}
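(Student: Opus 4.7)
\medskip

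The plan is to establish a pointwise identity of the same shape as Lemma~\ref{lem:FPhomo-BLp}. Applying the computation of the previous section to the diffusive part $\CC$ (with velocity-drift $F=v$, so $\mathrm{div}_v F=d$) and integrating the transport part $\TT$ by parts (using $\mathrm{div}_x v=0$ and $\mathrm{div}_v G=0$ so that $\TT^*=-\TT$), I obtain
\begin{equation*}
\int (\BB f)f^{p-1}m^p\,\mathrm{d}x\,\mathrm{d}v
 = -(p-1)\int (mf)^{p-2}|\nabla_v(mf)|^2\,\mathrm{d}x\,\mathrm{d}v
 + \int f^p m^p\,\varphi_{m,p}\,\mathrm{d}x\,\mathrm{d}v,
\end{equation*}
with multiplier
\begin{equation*}
\varphi_{m,p} \,:=\, \frac{2-p}{p}\,\frac{\Delta_v m}{m}
 + \frac{2(p-1)}{p}\,\frac{|\nabla_v m|^2}{m^2}
 + \frac{p-1}{p}\,d
 - \frac{v\cdot\nabla_v m}{m}
 - \frac{\TT m}{m} - M\chi_R.
\end{equation*}
The whole lemma then reduces to exhibiting a weight equivalent to $H^k$ (respectively to $e^{\kappa H^s}$) on which $\varphi_{m,p}\le a$ pointwise, for some $a<0$ and $M,R$ large enough.

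\medskip

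The natural guess $m_0(x,v)=\phi(H(x,v))$ (radial in the Hamiltonian) kills the transport contribution, since $\TT H\equiv 0$ along the Hamiltonian flow. However it leaves a fatal obstruction in the regime $\Psi(x)\to\infty$ with $|v|$ bounded: for a polynomial profile $\phi(H)=H^k$, every term $\Delta_v m/m$, $|\nabla_v m|^2/m^2$, $v\cdot\nabla_v m/m$ carries an $O(1/H)$ factor and decays to $0$, and similarly for the stretched-exponential profile $\phi(H)=e^{\kappa H^s}$ with $s\in(0,1)$. Thus $\varphi_{m_0,p}\to(p-1)d/p > 0$ in that regime, and the cutoff $-M\chi_R$, being compactly supported, cannot fix this. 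The remedy, in the spirit of the hypocoercivity ``twist'', is to work with a multiplier of the form
\begin{equation*}
m(x,v) \,:=\, \phi(H(x,v))\bigl(1+\varepsilon\,\omega(x,v)\bigr),
\end{equation*}
where $\varepsilon>0$ is small, so that $m\sim\phi(H)$ and the underlying Banach space is unchanged, and $\omega$ is a bounded function engineered so that $\TT\omega$ is strictly positive and coercive in the $|x|$-dominant regime. A concrete candidate is
\begin{equation*}
\omega(x,v) \,:=\, \frac{x\cdot v}{\langle x\rangle^\alpha\,\langle v\rangle},
\end{equation*}
for which an explicit computation gives $\TT\omega = G\cdot x/(\langle x\rangle^\alpha\langle v\rangle) + \text{lower order} \sim |x|^{\beta-\alpha}/\langle v\rangle$ as $|x|\to\infty$; choosing $\alpha\in[1,\beta)$ keeps $\omega$ bounded and yields a large positive $\TT\omega$ precisely where it is needed.

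\medskip

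With this weight, $\varphi_{m,p} = \varphi_{\phi(H),p} - \varepsilon\TT\omega/(1+\varepsilon\omega) + O(\varepsilon)$, where the $O(\varepsilon)$ corrections come from $\nabla_v\omega$ and $\Delta_v\omega$ entering the first two terms of $\varphi_{m,p}$. Control of the sign is then done region by region: in the $|x|$-dominant regime the negative twist contribution $-\varepsilon c\,|x|^{\beta-\alpha}$ dominates the bad positive constant; in the $|v|$-dominant regime the velocity-drag term $-v\cdot\nabla_v m/m\sim -k|v|^2/H$ (respectively $-\kappa s|v|^2 H^{s-1}$) provides dissipativity; on the compact middle region $-M\chi_R$ closes the estimate after choosing $R,M$ large. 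The lower bound $k>k(d,p)$ (respectively the admissible ranges of $\kappa$ and $s$) comes out of the requirement that, in the mixed regime where $|x|^\beta$ and $|v|^2$ are comparable, the combination of surviving terms still beats $(p-1)d/p$. As in Lemma~\ref{lem:FPhomo-BLp} the estimate is uniform in $p\in[1,\infty)$, allowing $p\to\infty$ to cover $L^\infty$.

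\medskip

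The main obstacle I expect is precisely the bookkeeping of the twist in the mixed regime, in particular propagating the sign of $-\varepsilon\TT\omega/(1+\varepsilon\omega)$ after the perturbations $\nabla_v\omega$ and $\nabla_x\omega$ contaminate the velocity-Hessian and $v$-drag terms of $\varphi_{m,p}$. One has to check that the exponents appearing in $\TT\omega\sim|x|^{\beta-\alpha}$ and in the positive corrections strictly separate, which ultimately dictates the explicit value of $k(d,p)$ and the correct normalization of $\omega$ by $\langle v\rangle$. Once this pointwise sign is secured, discarding the Dirichlet term $-(p-1)\int(mf)^{p-2}|\nabla_v(mf)|^2$ yields $\frac{1}{p}\frac{\mathrm{d}}{\mathrm{d}t}\|f\|_{L^p(m)}^p \le a\,\|f\|_{L^p(m)}^p$, which is the desired hypodissipativity.
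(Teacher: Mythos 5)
Your overall strategy matches the paper's: write the $L^p$ multiplier identity, observe that a purely Hamiltonian weight $\phi(H)$ kills $\mathcal T$ but leaves a fatal $O(1)$ positive constant in the $x$-dominant regime, and repair it by a bounded twist involving $x\cdot v$ whose image under $\mathcal T$ picks up the positive term $G\cdot x \sim \langle x\rangle^\beta$. The paper makes exactly this move, with the twist $w = 1 + \tfrac12\,x\cdot v/H_\alpha$, $H_\alpha := 1 + \alpha\langle x\rangle^\beta/\beta + |v|^2/(2\alpha)$, and then splits into the two regions $|v|^2/2 \gtrless H/3$.

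The gap is in your specific ansatz $\omega = (x\cdot v)/(\langle x\rangle^\alpha\langle v\rangle)$. The coercive piece it produces, $-\varepsilon\,\mathcal T\omega \sim -\varepsilon\,\langle x\rangle^{\beta-\alpha}/\langle v\rangle$, decays like $1/\langle v\rangle$, while the velocity drag from $\phi(H)=H^k$ contributes only $-k|v|^2/H$. In the intermediate regime $1\ll|v|\ll|x|^{\beta/2}$ (say $|v|\sim|x|^{\beta/4}$, still $x$-dominant), the drag $\sim -k|x|^{-\beta/2}\to 0$ while the twist $\sim -\varepsilon|x|^{3\beta/4-\alpha}$; this only beats the surviving constant $(p-1)d/p$ if $\alpha<\beta/2$. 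Combined with the boundedness constraint $\alpha\ge 1$, you need $\beta>2$, so the harmonic case $\beta=2$ (and the exponential-weight case down to $\beta=1$) slips through. Moreover your stated range $\alpha\in[1,\beta)$ is strictly larger than what actually works. The reason the paper's choice avoids this is that its normalizer $H_\alpha$ scales like $\max(\langle x\rangle^\beta,|v|^2)$: in the whole $x$-dominant region one gets $G\cdot x/H_\alpha \asymp 1$ uniformly in $|v|$, not decaying, and Young's inequality $x\cdot v \le H_\alpha$ keeps the twist bounded without imposing a competing lower bound on $\alpha$. In short: twist by $x\cdot v$ divided by a quantity comparable to the Hamiltonian, not by $\langle x\rangle^\alpha\langle v\rangle$. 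With that correction and the region-by-region bookkeeping you describe, the rest of your outline coincides with the paper's proof.
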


\begin{proof}[Proof of Lemma~\ref{lem:FPKrd-BdissipLp}.] 
  The proofs in the two cases will be similar: we give full details
  for the first case and less for the second case.  \mk

  \noindent
  \emph{Step 1: Polynomial weight.} Let us first consider $\beta \ge 
  2$ and $m(x,v) = H^k$, and the following weight multiplier:
\begin{equation*}
  W(x,v) := m \, w, \quad w := \left( 1 + \frac12 \frac{x \cdot v}{H_\alpha} 
  \right), \quad H_\alpha := 1+ \alpha \frac{\langle x
    \rangle^\beta}{\beta} + \frac{1}{\alpha} \frac{|v|^2}{2}.
\end{equation*}
Observe that $(x \cdot v) \le H_\alpha$ by Young's inequality (for any
$\alpha >0$ and $\beta \ge 2$), which proves that $w \in [1/2,3/2]$
and $(1/2)m \le W \le (3/2) m$.  We then consider a solution to the
equation $\partial_t f = \mathcal B f$ and compute
\begin{multline*}
  \frac1p \dt \int_{\R^d \times \R^d} f^p W^p \dd x \dd v = \int_{\R^d \times
    \R^d} f^{p-1} \mathcal C f W^p \dd x \dd v \\ + \int_{\R^d \times
    \R^d} f^{p-1} \mathcal T f W^p \dd x \dd v - M \int_{\R^d \times
    \R^d} f^{p} W^p \, \chi_R \dd x \dd v  . 
\end{multline*}
On the one hand, we recall the following computation picked up from
the proof of Lemma~\ref{lem:FPhomo-BLp} 
\begin{align*}
  & \int_{\R^d \times \R^d} f^{p-1} \mathcal C f W^p \dd x \dd v :=
  \int_{\R^d \times \R^d} f \, |f|^{p-2} \, (\CC f) \, W^p \dd x \dd v
  \\
 &  = - (p-1) \int_{\R^d \times \R^d} |\nabla_v (Wf)|^2 \, | Wf |^{p-2}
  \dd x \dd v \\
  & + \int_{\R^d \times \R^d} |f|^p \, W^p \, \left[
    \frac{2}{p'} {|\nabla_v
      W|^2 \over W^2} + \left( \frac{2}{p} - 1 \right)
    \frac{\Delta_v W}{W} + {d \over p'} - {v \cdot \nabla_v W \over W}
  \right] \dd x \dd v
\end{align*}
where $p'=p/(p-1)$, and we compute 
\begin{align*}
  & \frac{\nabla_v m}{m} = \frac{k v}{H}, \quad \frac{\Delta_v m}{m}
  = \frac{kd}{H} + \frac{k(k-1) |v|^2}{H^2}, \\
  & \frac{\nabla_v W}{W} = \frac{\nabla_v m}{m} + \frac{\nabla_v
    w}{w}, \quad \frac{v \cdot \nabla_v W}{W} = \frac{v \cdot \nabla_v
    m}{m} + \frac{v \cdot \nabla_v w}{w}, \\
  & \frac{|\nabla_v W|^2}{W^2} \le 2 \frac{|\nabla_v m|^2}{m^2}+ 2
  \frac{|\nabla_v w|^2}{w^2}.
\end{align*}
We deduce 
\begin{align*}
  & \left[ \frac{2}{p'} {|\nabla_v W|^2 \over W^2} + \left( \frac{2}{p}
      - 1 \right) \frac{\Delta_v W}{W} + {d \over p'} - {v \cdot
      \nabla_v W \over W} \right] \\ 
  & \le \left[ \frac{4}{p'} {|\nabla_v m|^2 \over m^2} + \left( \frac{2}{p}
      - 1 \right) \frac{\Delta_v m}{m} + {d \over p'} - {v \cdot
      \nabla_v m \over m} \right] \\
& + \left[ \frac{4}{p'} {|\nabla_v w|^2 \over w^2} + \left( \frac{2}{p}
      - 1 \right) \frac{\Delta_v w}{w} - {v \cdot
      \nabla_v w \over w} \right] + 2 \left( \frac2p - 1 \right)
  \frac{\nabla_v m \cdot \nabla_v w}{m w}\\
  & \le \frac{d}{p'} + \frac{C_1}{\sqrt{H}} - \frac12 \frac{(x \cdot
    v)}{H_\alpha} + \frac{1}{2 \alpha} \frac{(x \cdot v)|v|^2}{H_\alpha
    ^2} - k \frac{|v|^2}{H}
\end{align*}
for some constant $C_1>0$. The RHS is not negative at infinity, where
infinity means $H >>1$. This explains the need for the additional
correction term $w$ in $W$. We compute
\begin{multline*}
  \int_{\R^d \times \R^d} f^{p-1} \mathcal T f W^p \dd x \dd v:=
  \frac1p \int_{\R^d \times \R^d} \mathcal T (f^p) W^p \dd x \dd v \\ := -
  \int_{\R^d \times \R^d} f^p W^{p-1} \mathcal T W \dd x \dd v 
  := - \int_{\R^d \times \R^d} f^p W^{p} \frac{\mathcal T  w}{w} \dd x
  \dd v 
\end{multline*}
where we have used $\mathcal T H = 0$. We have then
\begin{equation*}
   -  \frac{\mathcal T  w}{w} \le C_2  + \frac{C_3 }{\alpha^2} \frac{|v|^2}{H}  -
   \frac{1}{4} \frac{\langle x
     \rangle^{\beta}}{H_\alpha}  
\end{equation*}
by differentiating and using Young inequality and the form of the
potential $\Psi(x)$ at infinity, for some constants $C_2,C_3>0$.  We
deduce by taking $R,M$ large enough that for any $\eta>0$ as small as
wanted
\begin{align*}
  & \frac1p \dt \int_{\R^d \times \R^d} f^p W^p \dd x \dd v \\ 
  & \le \int_{\R^d \times
    \R^d} f^p W^p \left[ C(d,p) +\left( \frac{C_3}{\alpha^2} - k\right) \frac{|v|^2}{H} -
    \frac{1}{4} \frac{\langle x
    \rangle^{\beta}}{H_\alpha} - M \chi_R \right] \dd x \dd v 
\end{align*}
where we have used $w \approx 1$. 
Finally we restrict to $H \ge R$ with $R$ large enough, and observe
that we have either $|v|^2/2 \ge H/3$ or $|v|^2/2 \le H/3$, together
with $|v|^2/2 + \langle x \rangle^\beta/\beta \ge 2H/3$. In the first
case
\begin{equation*}
  C(d,p) +\left( \frac{C_3}{\alpha^2} - k\right) \frac{|v|^2}{H} \le
  C(d,p) +\frac23\left( \frac{C_3}{\alpha^2} - k\right) \le - \frac{k}{2}
\end{equation*}
for $k$ large enough. In the second case we only have
$( C_3\alpha^{-2} - k) |v|^2/H \le 0$ but we can use the second
negative term since now $\langle x \rangle^\beta/\beta \ge H/3$ and
$H_\alpha^{-1} \ge 1/(\alpha H)$:
\begin{equation*}
  C(d,p) -  \frac{1}{4} \frac{\langle x
    \rangle^{\beta}}{H_\alpha} \le C(d,p) -
  \frac{\beta}{12\alpha} \le -
  \frac{\beta}{24\alpha} 
\end{equation*}
for $\alpha$ small enough. All in all we deduce finally, for $k$ large
enough and $\alpha$ small enough (depending on $p$ and $d$) and $M$
large enough:
\begin{align*}
   \frac1p \dt \int_{\R^d \times \R^d} f^p W^p \dd x \dd v  
   \le - K \int_{\R^d \times
    \R^d} f^p W^p \dd x \dd v 
\end{align*}
for some constant $K>0$, which concludes the proof.
\medskip

  \noindent
  \emph{Step 2: (Stretched) exponential weight.} Let us now consider
  $\beta \ge 1$, $s \in (0,1]$, $\kappa >0$ (with $\kappa <1$ in the
  case $s=1$) and $m(x,v) = e^{\kappa H^s}$, and the corrected weight:
\begin{equation*}
  W(x,v) := m \, w, \quad w := \left( 1 + \frac12 \frac{x \cdot v}{H_\alpha} 
  \right), \quad H_\alpha := 1+ \alpha \frac{\langle x
    \rangle^\beta}{\beta} + \frac{1}{\alpha} \frac{|v|^2}{2}
\end{equation*}
which satisfies again $W \approx m$.  We compute as before
\begin{align*}
  \int_{\R^d \times \R^d} f^{p-1} \mathcal C f W^p \dd x \dd v
  &= - (p-1) \int_{\R^d \times \R^d} |\nabla_v (Wf)|^2 \, | Wf |^{p-2}
  \dd x \dd v \\
  & \qquad + \int_{\R^d \times \R^d} |f|^p \, W^p \, \left[ {|\nabla_v
      W|^2 \over W^2} + {d \over p'} - v \cdot {\nabla_v W \over m}
  \right] \dd x \dd v
\end{align*}
where $p'=p/(p-1)$, and we compute again
\begin{align*}
  & \left[ \frac{2}{p'} {|\nabla_v W|^2 \over W^2} + \left( \frac{2}{p}
      - 1 \right) \frac{\Delta_v W}{W} + {d \over p'} - {v \cdot
      \nabla_v W \over W} \right] \\ 
  & \le \left[ \frac{4}{p'} {|\nabla_v m|^2 \over m^2} + \left( \frac{2}{p}
      - 1 \right) \frac{\Delta_v m}{m} + {d \over p'} - {v \cdot
      \nabla_v m \over m} \right] \\
& + \left[ \frac{4}{p'} {|\nabla_v w|^2 \over w^2} + \left( \frac{2}{p}
      - 1 \right) \frac{\Delta_v w}{w}  - {v \cdot
      \nabla_v w \over w} \right] + 2 \left( \frac2p - 1 \right)
  \frac{\nabla_v m \cdot \nabla_v w}{m w}
\end{align*}
with
\begin{align*}
  & \left[ \frac{4}{p'} {|\nabla_v m|^2 \over m^2} + \left( \frac{2}{p}
    - 1 \right) \frac{\Delta_v m}{m} + {d \over p'} - {v \cdot \nabla_v m \over m}
    \right] \le \frac{d}{p'} + \frac{1}{H} \left[ -\kappa s |v|^2 H^s + \kappa^2
    s^2 |v|^2 H^{2s-1} \right] \\
  & \left[ \frac{4}{p'}  {|\nabla_v w|^2 \over w^2} + \left( \frac{2}{p}
    - 1 \right) \frac{\Delta_v w}{w} - {v \cdot \nabla_v w \over w}
    \right]  + 2 \left( \frac2p - 1 \right)
    \frac{\nabla_v m \cdot \nabla_v w}{m w} \\ 
  & \hspace{8cm} \le 
    \frac{C_1}{\sqrt{H}} - \frac12 \frac{(x \cdot
    v)}{H_\alpha} + \frac{1}{2 \alpha} \frac{(x \cdot v)|v|^2}{H_\alpha
    ^2}
\end{align*}
for some constant $C_1>0$. 
Since $s \in (0,1)$ we have $2s-1 <s$ and the term $\kappa^2 s^2 |v|^2
H^{2s-1}$ is dominated by the previous negative term for $M$ large
enough.  Then we have 
\begin{equation*}
   -  \frac{\mathcal T  w}{w} \le C_2 - C_3 \frac{\langle x \rangle^{\beta-2}|x|^2}{H_\alpha} 
\end{equation*}
for two other constants $C_2, C_3>0$.  We deduce that
\begin{align*}
  & \frac1p \dt \int_{\R^d \times \R^d} f^p W^p \dd x \dd v \\ 
& \le \int_{\R^d \times
    \R^d} f^p W^p \left[ C(d,p) - \kappa s \frac{|v|^2
      H^s}{H} -
    C_3 \frac{\langle x
      \rangle^{\beta-2} |x|^2}{H_\alpha}  - M \chi_R \right] \dd x \dd v
\end{align*}
for some constant $C(d,p)>0$. 
Finally we restrict to $H \ge R$ with $R$ large enough, and observe
that we have either $|v|^2/2 \ge H/3$ or $|v|^2/2 \le H/3$, together
with $|v|^2/2 + \langle x \rangle^\beta/\beta \ge 2H/3$. In the first
case
\begin{equation*}
  C(d,p) - \kappa s \frac{|v|^2 H^s}{H} \le
  C(d,p) - \kappa s R^s \frac{|v|^2}{H} \le C(d,p) - \frac{\kappa s
    R^s}{3} \le - \frac{\kappa s R^s}{6} 
\end{equation*}
for $R$ large enough. In the second case we use the second negative
term since now $\langle x \rangle^\beta/\beta \ge H/3$ and
$H_\alpha^{-1} \ge 1/(\alpha H)$ and $|x| \ge (H/3)^{1/\beta} \ge
(R/3)^{1/\beta}$ is non-zero:
\begin{equation*}
  C(d,p) -  C_3 \frac{\langle x
    \rangle^{\beta-2} |x|^2}{H_\alpha}  \le  C(d,p) - \frac{C_3}{2} \frac{\langle x
    \rangle^{\beta}}{H_\alpha}  \le C(d,p) - \frac{C_3  \beta}{6
    \alpha} \le - \frac{C_3  \beta}{12 \alpha} 
\end{equation*}
for $\alpha$ small enough. All in all we deduce finally, for $R$ large
enough and $\alpha$ small enough (depending on $p$ and $d$) and $M$
large enough:
\begin{align*}
   \frac1p \dt \int_{\R^d \times \R^d} f^p W^p \dd x \dd v  
   \le - K \int_{\R^d \times
    \R^d} f^p W^p \dd x \dd v 
\end{align*}
for some constant $K>0$, which concludes the proof.
\end{proof}

Then, by arguing exactly similarly as in Section~\ref{sec:FPhomo} and
using the previous calculations for the differentiated equation and
the adjoint operators, we obtain the following lemma. We omit the
proof in order not the repeat closely related technical
estimates. 
\begin{lem}\label{lem:FPKrd-BdissipWp} 
We have:
\begin{itemize}
\item \emph{(Polynomial weights)} For any $\beta \ge 2$,
  $p \in [1,+\infty]$ and $k > k(d,p)$ for some $k(d,p)>d/p'$ from the
  proof, there is $a <0$ such that the operator $\BB-a$ is dissipative
  in the spaces $W^{1,p}(H^{k})$ and $W^{-1,p}(H^k)$.

\item \emph{(Exponential weights)} For any $\beta \ge 1$,
  $p \in [1,\infty]$ and $s \in (0,1]$ (with the extra condition
  $\kappa <1$ in the case $s=1$), there is $a <0$ such that the
  operator $\BB-a$ is dissipative in the spaces
  $W^{1,p}(e^{\kappa H^s})$ and $W^{-1,p}(e^{\kappa H^s})$.
\end{itemize}
\end{lem}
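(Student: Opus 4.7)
The plan is to follow the same two-step strategy as in Lemmata~\ref{lem:FPhomo-BW1p} and~\ref{lem:FPhomo-BW-1} of the periodic case, using Lemma~\ref{lem:FPKrd-BdissipLp} as the building block on the zeroth-order energy and the Hamiltonian-corrector weight
$W = m \, w$, $w = 1 + (x \cdot v)/(2 H_\alpha)$
already introduced there. I would treat $W^{1,p}(m)$ by differentiating the equation and setting up an equivalent norm with carefully tuned small weights in front of the velocity derivatives, and $W^{-1,p}(m)$ by duality on the $m$-conjugated adjoint operator.

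For the $W^{1,p}(m)$ estimate: write $\BB = \CC + \TT - M \chi_R$ and compute the evolution equations satisfied by $\partial_{x_i} f$ and $\partial_{v_i} f$. The relevant commutators are $[\partial_{x_i}, \BB] f = -(\partial_{x_i} G_j)\, \partial_{v_j} f + M (\partial_{x_i} \chi_R)\, f$ and $[\partial_{v_i}, \BB] f = -\partial_{v_i} f + \partial_{x_i} f + M(\partial_{v_i} \chi_R)\, f$. Then I repeat the $L^p(W)$ computation of Lemma~\ref{lem:FPKrd-BdissipLp} on each derivative with the same corrector $W$: the good negative terms $-K \, |\partial^\alpha f|^p W^p$ appear, and the only new contributions are (i) a cross-term $\partial_{x_i} f \cdot \partial_{v_i} f \, |\partial_{v_i} f|^{p-2} W^p$ coming from the transport part, (ii) a term $|\partial_{x_i} G_j|\, |\nabla_v f|\, |\nabla_x f|^{p-1} W^p$ of order $\langle x\rangle^{\beta-2}$, and (iii) localized zeroth-order terms supported on $|v|,|x| \lesssim R$ coming from the cut-off, which are harmless. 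Controlling (i) and (ii) with Young's inequality and setting
\[
\| f \|_{\tilde W^{1,p}(m)}^p := \| f \|_{L^p(W)}^p + \sum_{i=1}^d \| \partial_{x_i} f \|_{L^p(W)}^p + \zeta \sum_{i=1}^d \| \partial_{v_i} f \|_{L^p(W)}^p
\]
with $\zeta > 0$ small, the cross-term is absorbed because it comes with a factor $\zeta$, and the term (ii) is absorbed by the strictly negative Hamiltonian gain $-C_3 \langle x \rangle^\beta / H_\alpha$ of Lemma~\ref{lem:FPKrd-BdissipLp}, which controls $\langle x\rangle^{\beta-2}$ up to a multiplicative constant. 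This yields dissipativity for this equivalent norm and hence for $W^{1,p}(m)$, uniformly in $p \in [1,\infty]$ (letting $p \to \infty$ as in Step~3 of Lemma~\ref{lem:FPhomo-BW1p}).

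For the $W^{-1,p}(m)$ estimate: following Lemma~\ref{lem:FPhomo-BW-1}, I would introduce the $m$-conjugated adjoint operator $\BB_m^* \phi := m^{-1} \BB^*(m \phi)$, which is again a second-order elliptic operator with modified drift and zeroth-order coefficients of the same structure. The $L^p$ dissipativity of $\BB_m^*$ follows by the same Hamiltonian corrector weight trick as in Lemma~\ref{lem:FPKrd-BdissipLp} (the dominant terms being $\Delta_v m / m$, $v\cdot \nabla_v m / m$ and $\mathcal T w / w$, which are unchanged by taking the adjoint modulo sign-correct tweaks). Differentiating $\partial_t \phi = \BB_m^* \phi$ in $x$ and $v$ and setting up the equivalent norm $\| \phi \|_{L^p} + \|\nabla_x \phi\|_{L^p} + \zeta \|\nabla_v \phi\|_{L^p}$ with $\zeta$ small, I obtain $W^{1,p'}$ dissipativity for $\mathscr S_{\BB_m^*}(t)$, and then by duality $\|\mathscr S_{\BB_m}(t) h\|_{W^{-1,p}} \le C e^{at} \|h\|_{W^{-1,p}}$, which upon unwinding the $m$-conjugation gives the $W^{-1,p}(m)$ estimate for $\mathscr S_{\BB}(t)$.

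The main obstacle is the extra corrector $w$ in the weight $W$: unlike the periodic case where $m$ depends only on $v$, here every derivative of $W$ in $x$ or in $v$ produces polynomial-in-$(x,v)$ remainders (through $w$, $H_\alpha$ and their derivatives) that do not simply vanish at infinity. These remainders have to be systematically dominated by the two strict gains of Lemma~\ref{lem:FPKrd-BdissipLp}, namely $-\kappa s\, |v|^2 H^s / H$ (or $-k |v|^2/H$ in the polynomial case) and $-C_3 \langle x \rangle^{\beta-2}|x|^2 / H_\alpha$, via the same dichotomy $|v|^2/2 \ge H/3$ vs $|v|^2/2 \le H/3$ used in the proof of Lemma~\ref{lem:FPKrd-BdissipLp}. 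The bookkeeping is tedious but contains no new idea beyond that lemma, which is why the authors only state the result and omit the proof.
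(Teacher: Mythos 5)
Your overall strategy (differentiate the equation, reuse the weight $W=mw$ of Lemma~\ref{lem:FPKrd-BdissipLp} on each derivative, equivalent norm with a small factor $\zeta$ on the velocity gradient, duality through the $m$-conjugated adjoint for $W^{-1,p}$) is exactly what the paper indicates, since the authors omit the proof and simply refer to ``the previous calculations for the differentiated equation and the adjoint operators''. However, your treatment of the term you label (ii) is incorrect, and this is precisely where the potential case differs from the periodic one. The commutator $[\partial_{x_i},\TT]f=(\partial_{x_i}G_j)\,\partial_{v_j}f$ carries the coefficient $\nabla^2_x\Psi\sim\langle x\rangle^{\beta-2}$, which is \emph{unbounded} as soon as $\beta>2$. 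You propose to absorb it by the negative term $-C_3\langle x\rangle^{\beta}/H_\alpha$ produced by the corrector $w$; but $\langle x\rangle^\beta/H_\alpha\le\beta/\alpha$ is \emph{bounded} (that is the whole point of $H_\alpha$), and the same is true of the other available gain $-k|v|^2/H$ (resp.\ $-\kappa s|v|^2H^{s-1}$ restricted to the region $\langle x\rangle^\beta\gtrsim H$ where it is needed). All the strictly negative terms coming out of Lemma~\ref{lem:FPKrd-BdissipLp} are bounded constants, so none of them can dominate $\langle x\rangle^{\beta-2}\to\infty$. In the periodic case the analogous term $\sup_i\sum_j|\partial_{v_i}F_j|\sim\langle v\rangle^{\gamma-2}$ \emph{is} absorbed, but only because there the zeroth-order gain $\psi^0_{m,p}\sim-\theta\langle v\rangle^{\gamma+s-2}$ (or $-k\langle v\rangle^{\gamma-2}$ with $k$ large) genuinely grows at infinity; no such growing gain survives the Hamiltonian weight here.

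Closing this gap for $\beta>2$ requires a new ingredient, not bookkeeping: either a hypocoercivity-type cross term $\langle\nabla_xf,\nabla_vf\rangle$ in the equivalent norm together with the structural bound $|\nabla^2\Psi|\le C(1+|\nabla\Psi|)$ (this is how \cite[Theorem 35]{MR2562709}, invoked by the paper in the remark following the lemma, handles the same term in $H^1(\mu^{-1/2})$), or gradient weights adapted to $\nabla^2\Psi$; a Young inequality with any fixed (or even $x$-dependent) parameter merely shuffles the unbounded factor between the $|\nabla_xf|^p$ and $|\nabla_vf|^p$ contributions. The same obstruction reappears in your $W^{-1,p}$ step when you differentiate $\BB_m^*\phi$ in $x$. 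For $\beta=2$ (bounded $\nabla^2\Psi$) your argument does go through. Separately, your commutators have the wrong signs (one has $[\partial_{v_i},\BB]f=\partial_{v_i}f-\partial_{x_i}f-M(\partial_{v_i}\chi_R)f$, and similarly for $[\partial_{x_i},\BB]$), though this is harmless since you estimate these terms in absolute value.
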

 
\begin{rem}
  Observe that the previous lemma implies the hypodissipativity of $B$
  in $H^1(\mu^{-1/2})$ (as needed in the application of our abstract
  theorem to this Fokker-Planck equation with confinement). This
  result could also be obtained easily by slightly modifying the proof
  of \cite[Theorem 35]{MR2562709}. It is also possible to deduce this
  hypodissipativity from that of $L$ together with estimates
  quantifying the gain of decay at infinity in $x$ and $v$. Since we
  could prove the latter estimates using the ideas developed in this
  paper, and they seem of independent interest and not available in
  the literature, we include them in a short appendix. 
\end{rem}


 \subsection{Regularisation estimates}
\label{subsec:FPKrdB}

We prove a regularization property of the kinetic Fokker-Planck
equation with a confining potential. It is again related to the theory
of hypoellipticity, but is slightly less well-known due to the use of
weighted norms defined in the whole space.  The argument follows
the same method as before. 

\begin{lem}\label{lem:reg-kfp-pot}
  The semigroup $\mathscr S_{\BB}$ satisfies similar inequalities as
  in Lemma~\ref{lem:reg-kfp}, where now $\mu = e^{-H}$ is the
  $(x,v)$-dependent equilibrium, and $\ell$ is a number large enough
  (note the additional $H^\ell$ weight in $L^2$ and $L^1$ norms)
\[
{\bf (1)} \quad \forall \, t \in [0,1], \ \forall \, k \in \N^* \quad 
\left\{ 
\begin{array}{l}\displaystyle
\| \mathscr S_{\BB}(t) f \|_{H^k(\mu^{-1/2})} \lesssim
\frac{1}{t^{3k/2}} \, \| f\|_{L^2(H^{\ell/2} \mu^{-1/2})},
\vspace{0.3cm} \\ \ds  
\| \mathscr S_{\BB}(t) f \|_{L^2(H^{-\ell/2} \mu^{-1/2})} \lesssim \frac{1}{t^{3k/2}} \, \| f\|_{H^{-k}(\mu^{-1/2})}.
\end{array}
\right.
\]
second (gain of integrability at order zero)
\[
{\bf (2)} \quad \forall \, t \in [0,1], \quad 
\left\{ 
\begin{array}{l}\displaystyle
\| \mathscr S_{\BB}(t) f \|_{L^2(\mu^{-1/2})} \lesssim \frac{1}{t^{(5d+1)/2}} \, \|
f\|_{L^1(H^\ell \mu^{-1/2})}, \vspace{0.3cm} \\ \ds
\| \mathscr S_{\BB}(t) f \|_{L^\infty(H^{-\ell} \mu^{-1/2})} \lesssim \frac{1}{t^{(5d+1)/2}} \, \|
f\|_{L^2(\mu^{-1/2})}
\end{array}
\right.
\]
third (gain of integrability at order one)
\[
{\bf (3)} \quad \forall \, t \in [0,1], \quad 
\left\{ 
\begin{array}{l}\displaystyle
\| \nabla \mathscr S_{\BB}(t) f \|_{L^2(\mu^{-1/2})} \lesssim \frac{1}{t^{(5d+1)/2}} \, \|
\nabla f\|_{L^1(H^\ell \mu^{-1/2})}, \vspace{0.3cm} \\ \ds
\| \nabla \mathscr S_{\BB}(t) f \|_{L^\infty(H^{-\ell} \mu^{-1/2})} \lesssim \frac{1}{t^{(5d+1)/2}} \, \|
\nabla f\|_{L^2(\mu^{-1/2})}
\end{array}
\right.
\]
fourth (gain of integrability at ordre minus one)
\[
{\bf (4)} \quad \forall \, t \in [0,1], \quad 
\left\{ 
\begin{array}{l}\displaystyle
  \| \mathscr S_{\BB}(t) f \|_{W^{-1,\infty}(H^{-\ell} \mu^{-1/2})} \lesssim \frac{1}{t^{(5d+1)/2}} \, \|
  f\|_{W^{-1,2}(\mu^{-1/2})}, \vspace{0.3cm} \\ \ds
  \|  \mathscr S_{\BB}(t) f \|_{W^{-1,2}(\mu^{-1/2})} \lesssim \frac{1}{t^{(5d+1)/2}} \, \|
  f\|_{W^{-1,1}(H^\ell\mu^{-1/2})}.
\end{array}
\right.
\]
\end{lem}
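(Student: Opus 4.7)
The plan is to adapt the proof of Lemma~\ref{lem:reg-kfp} to the whole-space setting, using the same Hérau--Villani-type hypoelliptic energy functional but paying for the confining-potential terms with the weight $H^{\ell}$. The key structural observation is that the transport operator $\TT = -v\cdot\nabla_x + G\cdot\nabla_v$ is skew-adjoint with respect to the $\mu^{-1}$ inner product (since $\TT\mu = 0$ and $\nabla_x\mu^{-1} = G\mu^{-1}$, $\nabla_v\mu^{-1} = v\mu^{-1}$ so the $v\cdot G$ cross contributions cancel exactly), and the commutators $[\partial_{v_i},\TT] = -\partial_{x_i}$ and $[\partial_{x_i},\TT] = -(\partial_{x_i} G_j)\partial_{v_j}$ replicate the algebraic structure used in Section~\ref{sec:FPhomo}, the only novelty being the second commutator involving unbounded $x$-dependent coefficients $\partial_{x_i} G_j = O(\langle x\rangle^{\beta-2})$.

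First I would prove inequality (1) for $k=1$ by writing the energy identities for $\|f\|_{L^2(\mu^{-1/2})}^2$, $\|\nabla_v f\|_{L^2(\mu^{-1/2})}^2$, $\|\nabla_x f\|_{L^2(\mu^{-1/2})}^2$, and the cross term $\langle \nabla_v f,\nabla_x f\rangle_{L^2(\mu^{-1/2})}$, differentiating the equation $\partial_t f = \BB f$ in $x$ and $v$. The extra contributions relative to Lemma~\ref{lem:reg-kfp} are the commutator term $-(\partial_{x_i}G_j)\partial_{v_j} f$ in the $\partial_{x_i}f$ equation and localised error terms from $\partial_{x_i}\chi_R$, $\partial_{v_i}\chi_R$. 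Using Cauchy--Schwarz on the first one and absorbing the factor $\langle x\rangle^{\beta-2}$ by a fraction of the $H^{\ell}\mu^{-1}$-weighted coercivity term provided by Lemma~\ref{lem:FPKrd-BdissipWp}, I then build
\begin{equation*}
\mathcal{F}(t,f_t) := A\|f_t\|_{L^2(\mu^{-1/2})}^2 + at\|\nabla_v f_t\|_{L^2(\mu^{-1/2})}^2 + 2ct^2\langle \nabla_v f_t,\nabla_x f_t\rangle_{L^2(\mu^{-1/2})} + bt^3\|\nabla_x f_t\|_{L^2(\mu^{-1/2})}^2
\end{equation*}
with $c<\sqrt{ab}$, $2c>3b$ and $A$ large. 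The time derivative is $\le 0$ up to a remainder controlled by $\|f_t\|_{L^2(H^{\ell/2}\mu^{-1/2})}^2$ for some $\ell$ large enough depending on $\beta$, which by the $L^2(H^{\ell/2}\mu^{-1/2})$-hypodissipativity of Lemma~\ref{lem:FPKrd-BdissipLp} is bounded in terms of the initial datum. This gives the first half of (1); higher $k$ follows by iterating on the differentiated equation, and the second half of (1) follows by duality applied to $\mu^{-1/2}\BB^*(\mu^{1/2}\,\cdot)$.

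For inequality (2), I would mimic Step~2 of the proof of Lemma~\ref{lem:reg-kfp}: combine the $H^1$ gain obtained in Step~1 with the Nash inequality on $\R^{2d}$ applied to $f\mu^{-1/2}$,
\begin{equation*}
\int_{\R^{2d}} f^2\mu^{-1}\dd x\dd v \lesssim \Bigl(\int_{\R^{2d}} |f|\mu^{-1/2}\dd x\dd v\Bigr)^{4/(2d+2)}\Bigl(\int_{\R^{2d}}|\nabla(f\mu^{-1/2})|^2\dd x\dd v\Bigr)^{2d/(2d+2)},
\end{equation*}
and use a time-weighted functional $t^Z\bar\FF(t,f_t)$ with $Z=5d+1$, paying the price of a weight $H^{\ell}\mu^{-1/2}$ on the $L^1$ side, whose role is again to absorb the confinement-induced unbounded coefficients through the weighted dissipativity of Lemma~\ref{lem:FPKrd-BdissipLp}. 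Inequality (3) comes from applying (1) and (2) to the differentiated equation (whose $\partial_{x_i}$ and $\partial_{v_i}$ components satisfy the same structural commutator estimate), and inequality (4) is obtained from (3) by duality, with the adjoint producing the same $H^{\ell}$ losses on the opposite side.

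The main obstacle will be the bookkeeping of the weight $H^\ell$ throughout the Hérau--Villani functional. Unlike the torus case, the commutator $[\partial_{x_i},\TT]$ produces a coefficient $\partial_{x_i}G$ that grows polynomially, so every cross estimate now requires a small loss in the confinement weight; one has to check that these losses can be uniformly bundled into a single polynomial prefactor $H^\ell$ (for $\ell$ depending on $\beta$ and $d$) without destroying the good sign of the remaining coercive terms, and that the gain of integrability via Nash's inequality is preserved after the weighted interpolation. This is a matter of careful arithmetic of the powers rather than a new idea; the abstract scheme of Lemma~\ref{lem:reg-kfp} is robust enough to carry through.
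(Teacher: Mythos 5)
Your proposal follows essentially the same route as the paper: the paper's own proof is itself only a sketch, stating that the argument of Lemma~\ref{lem:reg-kfp} carries over verbatim with the single modification of inserting the weight $H^{\ell/2}$ into the $L^2$ part of the H\'erau--Villani functional, which is exactly the device you use (you keep the zero-order term of the functional unweighted and instead control the remainder by the separately propagated $L^2(H^{\ell/2}\mu^{-1/2})$ norm via Lemma~\ref{lem:FPKrd-BdissipLp} --- an inessential variant of placing the weight inside the functional). Your identification of the commutator $[\partial_{x_i},\TT]=-(\partial_{x_i}G_j)\partial_{v_j}$ with polynomially growing coefficient $O(\langle x\rangle^{\beta-2})$ as the source of the required weight is precisely the point the paper leaves implicit.
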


\begin{proof}[Proof of Lemma~\ref{lem:reg-kfp-pot}.] The proof is
  similar to that of Lemma~\ref{lem:reg-kfp}. The only technical change
  is an additional weight in energy functional for proving the point
  {\bf (1)}; it reads for instance in the case $k=1$:
\begin{multline*}
  \FF(t,f_t) := A \| f_t \|_{L^2(H^{\ell/2} \mu^{-1/2})}^2 + a t \| \nabla_{v} f_t  \|_{L^2(\mu^{-1/2})}^2 
   \\  + 2 c t^{2} \langle \nabla_{v} f_t,  \nabla_{x} f_t
  \rangle_{L^2(\mu^{-1/2})} +  b t^3 \| \nabla_{x} f_t  \|_{L^2(\mu^{-1/2})}^2
\end{multline*}
with $a,b,c >0$, $c < \sqrt{ab}$ (positive definite) and $A>0$ and
$\ell \in \N$ large enough (observe the weight $H^\ell$ on the $L^2$
part of the norm).
\end{proof}
 
The proof of the decay estimate on $T_n(t)$ and the completion of the
proof of Theorem~\ref{theo:KFPrdhyp1} are then done as in
Corollary~\ref{cor:Tn:FPtorus} and Theorem~\ref{theo:vFP}. 

\appendix

\section{Quantitative compactness estimates on the resolvent}
\label{sec:quant-comp-estim}

In this appendix we amplify the ideas of this article in order to give
quantitative estimates of compactness on the resolvent of the kinetic
Fokker-Planck considered. More precisely: One way to understand the
compactness of resolvent is to split it into a local gain of
\emph{regularity} and a gain of \emph{decay at infinity}, and we focus
here on the gain of decay at infinity. The gain of regularity can then
be recovered by local hypoelliptic estimates along the theory of
H\"ormander. Note that another route for deriving estimates on the
gain of decay at infinity is to use the \emph{global} hypoellipticity
estimates as in \cite{Herau2007} and \cite[Section~A.21]{MR2562709}
with Gaussian weight and \emph{deduce} the gain of decay at infinity
by applying some forms of ``strengthened'' Poincar\'e inequality;
however the fractional derivatives involved would likely create
technical difficulties, whereas our estimates based on weight
multiplicators is elementary. Our estimates also do not require
regularity on the solution. 

Let us first say a word on the case of the periodic confinement. In
this case it is enough to use the strengthened Poincar\'e inequality
in velocity only: 
\begin{align*}
  & \dt \frac12 \int_{\T^d \times \R^d} f^2 \mu^{-1} \dd x \dd v \le - \int_{\T^d
  \times \R^d} \left| \nabla_v \left( \frac{f}{\mu} \right) \right|
  \mu \dd x \dd v \\ & \le - K \int_{\T^d \times \R^d} f^2 (1+|v|^2) \mu^{-1} + C
  \int_{\T^d \times \R^d} f^2 \mu^{-1} \dd x \dd v 
\end{align*}
for some constants $C,K>0$, and therefore we deduce
\begin{align*}
  - \langle \LL f, f \rangle_{L^2(\mu^{-1})} \ge K \int_{\T^d \times \R^d} f^2 (1+|v|^2) \mu^{-1} \dd x \dd v - C
  \int_{\T^d \times \R^d} f^2 \mu^{-1} \dd x \dd v 
\end{align*}
and finally 
\begin{align*}
  \int_{\T^d \times \R^d} f^2 (1+|v|^2) \mu^{-1} \dd x \dd v \la \n{\LL
  f}_{L^2(\mu^{-1})} ^2 + \n{f}_{L^2(\mu^{-1})} ^2 
\end{align*}
which gives the gain of decay at infinity on the resolvent. Combined
with hypocoercivity estimates that provide bounds
$\n{f}_{L^2(\mu^{-1})} \la \n{\LL f- \xi}_{L^2(\mu^{-1})}$ for certain
$\xi \in \C$, this allows to control $\int f^2 (1+|v|^2) \mu^{-1}$.

Let us now turn to the more interesting case of the potential
confinement. We now differentiate the following norm
\begin{align*}
  \int_{\R^d \times \R^d} f^2 W \mu^{-1} \dd x \dd v, \quad W(x,v):= \left[ a |x|^{\beta/3} + b
  |v|^2 + 2 c |x|^{\beta/6-1} (x \cdot v) \right], 
\end{align*}
for some appropriate choice of $a,b,c >0$ so that $c <
\sqrt{ab}$. Then
\begin{align*}
  & \dt \frac12 \int_{\T^d \times \R^d} f^2 |x|^{\beta/3} \mu^{-1} \dd
    x \dd v \\ 
  & \qquad \qquad \le - \int_{\T^d
    \times \R^d} \left| \nabla_v \left( \frac{f}{\mu} \right) \right| |x|^{\beta/3}
    \mu \dd x \dd v + C_0 \int_{\T^d \times \R^d} f^2 |x|^{-\beta/3} |v|
    \mu^{-1} \dd x \dd v \\ 
  & \dt \frac12 \int_{\T^d \times \R^d} f^2 |v|^{2} \mu^{-1} \dd
    x \dd v \\ 
  & \qquad \qquad \le - \int_{\T^d
    \times \R^d} \left| \nabla_v \left( \frac{f}{\mu} \right) \right| |v|^{2}
    \mu \dd x \dd v + C_0 \int_{\T^d \times \R^d} f^2 (1+ |v|^2)
    \mu^{-1} \dd x \dd v \\
  & \dt \frac12 \int_{\T^d \times \R^d} f^2 |x|^{\beta/6-1}(x \cdot v) \mu^{-1} \dd
    x \dd v \\ 
  & \qquad \qquad \le - \int_{\T^d
    \times \R^d} f^2 |x|^{2\beta/3} \mu \dd x \dd v + C_0 \int_{\T^d \times \R^d} f^2 \left(1+|x|^{-\beta/3}|v|^2\right)
    \mu^{-1} \dd x \dd v \\ 
  & \qquad \qquad + C_0 \int_{\T^d
    \times \R^d} \left| \nabla_v \left( \frac{f}{\mu} \right) \right|
    |x|^{\beta/6} |v|
    \mu \dd x \dd v 
\end{align*}
for some constant $C_0>0$, which implies by using Young's inequality
and adjusting the constants $a,b,c>0$ that
\begin{align*}
  \dt \int_{\R^d \times \R^d} f^2 W \mu^{-1} \dd x \dd v =
  \int_{\R^d \times \R^d} (\LL f) f W \mu^{-1} \dd  x \dd v \\
  \le - K
  \int_{\R^d \times \R^d} f^2 W^2 \mu^{-1} \dd x \dd v + C \int_{\R^d \times \R^d} f^2 \mu^{-1} \dd x \dd v
\end{align*}
for some constants $C,K>0$, and finally 
\begin{align*}
  \int_{\R^d \times \R^d} f^2 W^2 \mu^{-1} \dd x \dd v \la \n{\LL
  f}_{L^2(\mu^{-1})} ^2 + \n{f}_{L^2(\mu^{-1})} ^2,
\end{align*}
which is again a quantitative estimate of gain of decay at infinity
for the resolvent.




%
%
%

\bibliographystyle{acm}
\bibliography{./spectreFP}

\bigskip
\bigskip

 \signsm \signcm 

\end{document}